\title{Deterministic Approximate Counting of Colorings with fewer than \texorpdfstring{$2\Delta$}{2 Delta} Colors via Absence of Zeros}
\newcommand*{\R}{\mathbb{R}}
\newcommand*{\C}{\mathbb{C}}
\newcommand*{\prob}{\mathbb{P}}
\newcommand*{\pr}{\mathbb{P}}
\begin{document}
\maketitle

\begin{abstract}
Let $\Delta,q\geq 3$ be integers.
We prove that there exists $\eta\geq 0.002$ such that if $q\geq (2-\eta)\Delta$, then there exists an open set $\mathcal{U}\subset \mathbb{C}$ that contains the interval $[0,1]$ such that for each $w\in \mathcal{U}$ and any graph $G=(V,E)$ of maximum degree at most $\Delta$, the partition function of the anti-ferromagnetic $q$-state Potts model evaluated at $w$ does not vanish. 
This provides a (modest) improvement on a result of Liu, Sinclair, and Srivastava, and breaks the $q=2\Delta$-barrier for this problem.

As a direct consequence we obtain via Barvinok's interpolation method a \emph{deterministic} polynomial time algorithm to approximate the number of proper $q$-colorings of graphs of maximum degree at most $\Delta$, provided $q\geq (2-\eta)\Delta$.
\end{abstract}

\section{Introduction}
The algorithmic problem of designing an algorithm to (approximately) compute the number of $q$-colorings of a graph $G$ has received a lot of interest in the past thirty years.
The main challenge is to design for each pair of positive integer $\Delta,q$ such that $q\geq \Delta+1$ an algorithm that on input of an $n$-vertex graph $G$ of maximum degree at most $\Delta$ and $\varepsilon>0$ outputs the number of $q$-colorings of $G$ within an $\exp(\varepsilon)$ relative error in time \textsc{poly($n/\varepsilon$)}. 
We note that if $q< \Delta$, approximating the number of proper $q$-colorings of a graph of maximum degree $\Delta$ is \textsc{NP-hard}~\cite{GSVhardnesscoloring} (provided $q$ is even), even for triangle-free graphs.
Thus far the only nontrivial cases for which such an algorithm is known to exist correspond to $\Delta=3$ and $q\geq4$~\cite{Luetalfptasforcubic}.
Below we will say more about the status of this problem in general.

Over the past thirty years several algorithmic approaches have been developed to approximately count the number of proper colorings (as well as for several other counting problems) including variations of the celebrated MCMC method~\cite{chen2019improved,Jerrumsimple}, the correlation decay approach~\cite{BanGar,GKfptas,Weitz} and the interpolation method~\cite{Barbook,BDPR21,LSS2Delta,PatReg17}. 
The latter two approaches yield \emph{deterministic} algorithms as opposed to the MCMC-based method.

The interpolation method is based on the existence of a zero-free region for an associated family of polynomials whose evaluations count the number of proper colorings. 
In the present paper this role is taken by the \emph{partition function of the Potts model}, which we will introduce shortly. 
The main focus of the present paper is to provide an improved zero-free region for this polynomial, which, as a direct corollary, yields efficient deterministic approximation algorithms for counting proper colorings. 

\paragraph{The partition function of the Potts model.}
For a graph $G=(V,E)$, a positive integer $q$ and $w\in \mathbb{C}$, the \emph{partition function of the Potts model} is defined as 
\begin{equation}\label{eq:def pf}
Z_G(q;w):=\sum_{\phi:V\to [q]} w^{m(\phi)},  
\end{equation}
where $[q]:=\{1,\ldots,q\}$, and $m(\phi)$ denotes the number of \emph{monochromatic} edges, i.e., the number of edges $\{u,v\}$ such that $\phi(u)=\phi(v)$.
We note that $Z_G(q;0)$ equals the number of proper $q$-colorings of $G$.
In statistical physics, one usually takes $w>0$ parameterized as $w=e^{\beta J}$, where $J$ denotes the coupling constant, and $\beta$ the inverse temperature. Here $J<0$ corresponds to the anti-ferromagnetic case, while $J>0$ corresponds to the ferromagnetic case.

Partly motivated by the classical Lee-Yang~\cite{lee1952statistical} and Fisher~\cite{fisher1965nature} approach to phase transitions, there is an interest in the location of the complex zeros of $Z_G(q;w)$, both in terms of the variable $w$ \cite{BDPR21,feldmann1998study,LSS2Delta,LSSfisher,patel2023near} and in the variable $q$~\cite{BiggsShrock,Borgszeros,Roederqplane,ChangShrock,Roederchromatic,FerProc,JS09,jenssen2023improved,shrock2000exact,Sokalzeros} (for the latter, one has to extend the partition function of the Potts model to the partition function of the random cluster model, where $q$ can also take non-integer values).

\paragraph{Absence of zeros in computer science.}
More recently, there has been an increasing interest in understanding the location of these complex zeros from the perspective of computer science and probability theory. 
This interest comes from the fact that zero-free regions for the partition functions of models, such as the Potts model, yield efficient deterministic approximation algorithms~\cite{Barbook,PatReg17}, rapid mixing of the associated Glauber dynamics~\cite{anarisector,vigodastability}, (local) central limit theorems for associated random variables~\cite{localcentrallimit,LPRScentrallimit,Michelencentrallimit}, and decay of correlations~\cite{gamarnikabsence,regts2021absence}.
In particular, open sets $\mathcal{U}$ containing the interval $[0,1]$ such that $Z_G(q;w)\neq 0$ for all graphs of maximum degree at most $\Delta$ and $w\in \mathcal{U}$ are of interest, since via Barvinok's interpolation method~\cite{Barbook,PatReg17} they yield efficient algorithms for approximately computing the number of proper $q$-colorings of these graphs, a notorious problem in computer science~\cite{BanGar,BDPR21,carlson2025flip,chen2019improved,Friezevigodasurvey,GKfptas,Hayes2003,Jerrumsimple,LSS2Delta,Luetalfptasforcubic,lu2013improved,Vigoda}.
It is a folklore conjecture that such algorithms exist provided $q\geq \Delta +2$~\cite{Friezevigodasurvey}.
So far, this has only been proved for $q\geq \tfrac{11}{6}\Delta$ by Vigoda~\cite{Vigoda} in case one allows the algorithm to use randomness. This bound on $q$ stood for nearly $20$ years until Chen et al.~\cite{chen2019improved} improved this to $q\geq (\tfrac{11}{6}-\varepsilon)\Delta$ with $\varepsilon\approx 10^{-5}$. Very recently, a more substantial improvement due to Carlson and Vigoda~\cite{carlson2025flip} appeared, which states that one can take $\varepsilon\geq 0.024$.
For \emph{deterministic} algorithms, the existence of such an algorithm is only known\footnote{After the first posting of the present paper to the arXiv, it was shown in~\cite{chen2024deterministic} that the randomized algorithm of Carlson and Vigoda can in fact be derandomized.} when $q\geq 2\Delta$ by a result of Liu, Sinclair, and Srivastava~\cite{LSS2Delta}. 

\paragraph{The \texorpdfstring{$2\Delta$}{2Delta} bound of  Liu, Sinclair, and Srivastava~\cite{LSS2Delta}.}
The deterministic algorithm of Liu, Sinclair, and Srivastava~\cite{LSS2Delta} is based on Barvinok's interpolation method. Their main contribution lies in proving a zero-free region for the partition function of the Potts model. 
They essentially prove the following more general statement, allowing them to deduce zero-free regions from probabilistic statements.

\begin{itemize}
\item[$(\star)$]\emph{Let $\mathcal{G}$ be a class of graphs of maximum degree at most $\Delta$. Suppose that $q$ is such that for all $w\in [0,1]$ and for any rooted graph $(G,v)$ ($G\in \mathcal{G}$) in which some of its vertices are precolored, when drawing a random coloring from the Potts model with parameter $w$ the marginal probability that $v$ gets color $i$ (assuming that $i$ is not used on the neighbors of $v$) is bounded by $\tfrac{1}{d+2}$, where $d$ denotes the number of neighbors of $v$ that are not precolored.
Then there exists an open set $\mathcal{U}$ containing $[0,1]$ such that $Z_G(q;w)\neq 0$ for all $w\in \mathcal{U}$ and $G\in \mathcal{G}$.}
\end{itemize}
It is not difficult to see that for $q\geq 2\Delta$ this condition is satisfied for the class of all graphs of maximum degree at most $\Delta$, and hence this immediately gives the desired zero-freeness and approximation algorithm via the interpolation method.
For triangle free graphs, the condition is satisfied provided $q\geq 1.7633\Delta +\beta$ where $\beta$ is an absolute constant, see~\cite{LSS2Delta} for the precise statement.
It is easy to see that for $q<2\Delta$ there are examples of graphs where the condition in $(\star)$ is not satisfied. 
Unfortunately, the proof of $(\star)$ as given in~\cite{LSS2Delta} is somewhat technical, making it difficult to see how to push the bounds on $q$ below $2\Delta$.

\paragraph{Our contributions.}
This brings us to the contributions of the present paper.
One of our contributions is that we give a new proof of the existence of the zero-free region for $q\geq 2\Delta$, which is shorter and arguably more transparent and less technical.
Secondly, we are able to take advantage of the local structure around the root vertex for graphs where the condition in $(\star)$ is not met, and thereby provide a modest improvement on the result of Liu, Sinclair, and Srivastava~\cite{LSS2Delta}. 

\begin{theorem}\label{thm:main}
There exists a constant $\eta\geq 0.002$ such that for all integers $\Delta\geq 3$ and $q\geq (2-\eta)\Delta$ there exists an open set $\mathcal{U}\subset \mathbb{C}$ containing the interval $[0,1]$ such that for each $w\in \mathcal{U}$ and graph $G$ of maximum degree at most $\Delta$, $Z_G(q,w)\neq 0$.
\end{theorem}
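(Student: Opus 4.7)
The strategy is to combine the Liu--Sinclair--Srivastava implication $(\star)$ with a sharpening that exploits the local structure around high-degree vertices. Recall that $(\star)$ converts a uniform bound of $1/(d+2)$ on real marginal probabilities into complex zero-freeness in a neighborhood of $[0,1]$; our task is to establish such a marginal bound for the whole class of graphs of maximum degree at most $\Delta$ even when $q<2\Delta$.

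I would begin by giving a cleaner re-derivation of $(\star)$ itself. The standard way to convert marginal bounds into a zero-free region is to expand $Z_G(q;w)$ as a telescoping product of ratios $Z_{G_i}(q;w)/Z_{G_{i-1}}(q;w)$, where $G_i$ is $G_{i-1}$ with one more vertex precolored in some revealing order. Each such ratio equals a complex marginal probability, so the zero-freeness of $Z_G$ reduces to keeping each of these marginals in an open region bounded away from $0$ and $\infty$ along the process. A perturbation / analytic-continuation argument using the real bound $p_i(v)\leq 1/(d+2)$ then yields the required complex bound in a uniform neighborhood of $[0,1]$, avoiding the more technical contraction machinery of \cite{LSS2Delta}.

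Next I would analyze exactly when the hypothesis of $(\star)$ fails. For a vertex $v$ with $d_{\mathrm{pre}}$ precolored neighbors and $d=\deg(v)-d_{\mathrm{pre}}$ free neighbors, the naive bound for the marginal at $w\in[0,1]$ is $1/(q-d_{\mathrm{pre}})$, which is at most $1/(d+2)$ iff $\deg(v)\leq q-2$. Hence for $q=(2-\eta)\Delta$ with $\eta$ small, the only ``bad'' vertices are those with $\deg(v)\geq q-1$, necessarily close to $\Delta$. For such a bad $v$, instead of using the naive bound I would expand the tree recursion one step further, writing the marginal at $v$ as a function of the marginals at its free neighbors. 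Each such neighbor $u$ has degree at most $\Delta$ and has already ``spent'' one of its $\Delta$ slots on $v$, so whenever $u$ is itself not bad, the marginal bound available at $u$ (from $(\star)$ applied one level down) is strictly sharper than $1/(d+2)$. Propagating this saving back through the recursion yields an effective marginal at $v$ that is still at most $1/(d+2)$, provided the slack $\eta\Delta$ between $q$ and $2\Delta$ absorbs the loss from the remaining bad neighbors.

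The main obstacle is the quantitative analysis of this refined two-step recursion. The worst case has $v$ of degree $\Delta$ with as many bad neighbors as possible and a worst-case precoloring of the second neighborhood, and the book-keeping must be tight enough to produce the explicit constant $\eta\geq 0.002$. The key observation that enables a nontrivial $\eta$ is that $v$'s entire first neighborhood cannot simultaneously be extremal (each bad neighbor, having degree close to $\Delta$, must itself have most of its own neighborhood free, forcing at least some of \emph{its} neighbors to be non-bad), so a careful linear/convex optimization over the mixture of bad and non-bad neighbors gives a net gain. Once this refined bound is established for all rooted precolored graphs of maximum degree $\Delta$, the clean version of $(\star)$ from the first step converts it into the required zero-free open set $\mathcal{U}\supset[0,1]$, proving Theorem \ref{thm:main}.
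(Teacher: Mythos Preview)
Your plan has a genuine gap: you propose to verify the marginal hypothesis of $(\star)$ for all precolored graphs of maximum degree $\Delta$ when $q=(2-\eta)\Delta$, but this is impossible. The paper itself notes (see the Remark after Lemma~\ref{lem:prob basic} and the paragraph following $(\star)$ in the introduction) that for $q<2\Delta$ there are explicit instances where the bound $\pr[\Phi(v)=j]\le 1/(d+2)$ fails. Concretely, take $v$ with free degree $d=\Delta-1$ whose free neighbors form a clique, and pin one extra leaf of color $j$ to each neighbor. At $w=0$ the neighbors receive $\Delta-1$ distinct colors from $[q]\setminus\{j\}$, so $\pr[\Phi(v)=j]=1/(q-\Delta+1)$, which exceeds $1/(d+2)=1/(\Delta+1)$ whenever $q<2\Delta$. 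Your ``key observation'' also breaks on this example: every neighbor of $v$ has degree $\Delta$ and almost all of its own neighbors free, yet the marginal at $v$ is still too large. Looking one level deeper does not rescue the bound, because the obstruction is the dense (clique) structure of $N(v)$, not a shortage of free vertices among the neighbors.

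The paper therefore does \emph{not} attempt to verify the hypothesis of $(\star)$. Instead, it strengthens the inductive statement so that what is controlled is the \emph{product}
\[
\pr_{G^{+k},w}[\Phi(v)=j]\cdot \bigl|R_{\overline{G},v;i,j}(w)-R_{\overline{G},v;i,j}(\tilde w)\bigr|\le \varepsilon_2/\Delta,
\]
(item~(ii) in the proof in Section~\ref{sec:proof eta>0}). This allows a trade-off: in the bad cases where the marginal exceeds $1/(d+2)$, one shows via a structural case analysis (many neighbors have color $q$ blocked, the induced neighborhood has high average degree, etc.) that the log-ratio difference at $v$ is correspondingly small. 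The refined probability bounds of Section~\ref{sec:more marginal bounds} (Corollaries~\ref{cor: few blocked} and~\ref{cor:sparse neighborhood}) are used not to push the marginal below $1/(d+2)$ unconditionally, but to isolate the residual hard case and then derive a contradiction with the telescoping bookkeeping. Your two-step recursion idea is in the right spirit, but to make it go through you must abandon the goal of bounding the marginal alone and instead carry the coupled product through the induction.
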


As a direct corollary, we have the following result breaking the $q=2\Delta$ barrier for designing efficient \emph{deterministic} approximation algorithms for counting proper colorings based on absence of zeros. 

\begin{corollary}\label{cor:main alg}
There exists a constant $\eta\geq 0.002$ such that for all integers $\Delta\geq 3$ and $q\geq (2-\eta)\Delta$ and $w\in [0,1]$ there exists a deterministic algorithm which given an $n$-vertex graph of maximum degree at most $\Delta$ and $\varepsilon>0$ computes a number $\xi$  satisfying
    \[
        e^{-\varepsilon}\le \frac{Z_G(q,w)}{\xi}\le  e^{\varepsilon}    
    \]
    in time polynomial in $n/\varepsilon$.
\end{corollary}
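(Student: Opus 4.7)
The plan is to derive the corollary from Theorem~\ref{thm:main} via the now-standard combination of Barvinok's interpolation method with the algorithmic framework of Patel and Regts~\cite{PatReg17}, which turns a zero-free neighborhood of $[0,1]$ into a deterministic polynomial-time approximation algorithm for bounded-degree graphs.

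Concretely, fix $w\in[0,1]$, an $n$-vertex graph $G$ of maximum degree at most $\Delta$, and let $\mathcal{U}\supset[0,1]$ be the zero-free neighborhood provided by Theorem~\ref{thm:main}; crucially, $\mathcal{U}$ depends only on $\Delta$ and $q$, not on $G$. I would fix a simply connected subdomain $\mathcal{U}'\subset\mathcal{U}$ containing $[0,1]$, a smooth path $\gamma\subset\mathcal{U}'$ from $1$ to $w$, and work with the univariate polynomial $p(t):=Z_G(q;\gamma(t))$. Since $p(0)=Z_G(q;1)=q^n$ is known exactly and $p$ has no zeros in a complex neighborhood of $[0,1]$, the function $\log p(t)$ is analytic there, and the standard Barvinok estimate (see~\cite{Barbook}) shows that truncating its Taylor series at $t=0$ to degree $k=O(\log(n/\varepsilon))$ yields an additive $\varepsilon$-approximation to $\log p(1)=\log Z_G(q;w)$, and hence the desired multiplicative $e^{\pm\varepsilon}$-approximation $\xi$.

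The remaining step, and the main potential obstacle, is to compute the first $k$ Taylor coefficients of $\log p$ in time polynomial in $n$ rather than $n^{O(k)}$. I would use the Fortuin--Kasteleyn expansion
\[
Z_G(q;w)=\sum_{S\subseteq E}(w-1)^{|S|}q^{c(V,S)},
\]
where $c(V,S)$ denotes the number of connected components of $(V,S)$, so that the coefficient of $(w-1)^j$ in $Z_G(q;w)$ is a sum over edge subsets of size exactly $j$. Converting the first $k$ coefficients of $p$ into the first $k$ coefficients of $\log p$ is then a polynomial-time Newton-identity computation, and it reduces the task to enumerating connected subgraphs on at most $k$ edges incident to each vertex. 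In a graph of maximum degree $\Delta$ there are only $\Delta^{O(k)}=\mathrm{poly}(n/\varepsilon)$ such subgraphs, and they can be listed and summed in the required time by the methods of Patel and Regts~\cite{PatReg17}. The only point that needs brief verification is the uniformity of the radius of $\mathcal{U}$ over the graph class, so that $k=O(\log(n/\varepsilon))$ really is enough regardless of $G$; this follows by inspection of the proof of Theorem~\ref{thm:main}.
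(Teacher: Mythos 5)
Your proposal is correct and follows exactly the route the paper intends: the paper omits the proof of this corollary, noting only that it follows from Theorem~\ref{thm:main} via Barvinok's interpolation method~\cite{Barbook} combined with the Patel--Regts framework~\cite{PatReg17} for computing the low-order Taylor coefficients of $\log Z_G$ (as detailed in the proof of Corollary~1 of~\cite{BDPR21}), which is precisely the Taylor-truncation-plus-connected-subgraph-enumeration argument you describe, with the random cluster expansion making the coefficients accessible as sums over edge subsets. The one small point to tidy is that the path $\gamma$ from $1$ to $w$ should be a polynomial reparametrization mapping a disk of radius slightly larger than $1$ into the zero-free neighborhood (so that $p(t)$ remains a polynomial whose zero-free disk controls the Taylor remainder with $k=O(\log(n/\varepsilon))$ terms), rather than an arbitrary smooth path; this is the standard change of variables carried out in~\cite{Barbook} and~\cite{BDPR21}.
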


We note that this is indeed a direct corollary of Theorem~\ref{thm:main} using Barvinok's interpolation method~\cite{Barbook} in combination with the improvement due to Patel and the last author of the present paper~\cite{PatReg17}.
An explanation of how this fits the framework of~\cite{PatReg17} can be found in the proof of~\cite[Corollary 1]{BDPR21} and, therefore, we omit a proof here.

Another consequence of Theorem~\ref{thm:main} is that for $q\geq (2-\eta)\Delta$ and $w\in (0,1)$ the random variable defined as the number of monochromatic edges in a random sample from the Potts model (with parameters $q$ and $w$) on a graph of maximum degree $\Delta$ satisfies a (local) central limit theorem. 
This follows almost directly from the results in~\cite{Michelencentrallimit,localcentrallimit,kawasakidynamics} and we refer the interested reader to these papers for the relevant details.

\paragraph{Organization and conventions.}
The remainder of the paper is dedicated to proving Theorem~\ref{thm:main}, first for $\eta=0$ and later for $\eta>0$. 
In the next section we give a detailed technical outline of our approach, at the end of which the reader may find an overview of the remainder of the paper.

While our arguments can for example be extended to list-coloring, we opt for focusing on just colorings so as to limit any technical overhead and hopefully making the proof more transparent. We comment on the extension to list-colorings and other possible extensions in Section~\ref{sec:conclusion}.

Although we could slightly improve our lower bound on $\eta$, we decided to stick to a bound with only three decimal places. 
Our current established bounds do not seem to allow us to replace the lower bound of $0.002$ by $0.003$. 
We comment on possible approaches for improvement in Section~\ref{sec:conclusion}.

\section{Outline of approach and setup}\label{sec:setup}
In this section, we give a detailed outline of our approach, which is inspired by~\cite{Barbook,BDPR21,LSS2Delta} and uses several concepts developed in these papers.
The main idea is to use induction to prove a result about partition functions of graphs with certain pre-colored vertices.
To carry out the induction, we will need good control over how changing the color of a vertex affects the partition function.
We next introduce some definitions that will be used throughout the paper and that will facilitate the discussion of the proof outline.

\subsection{Definitions and expanded theorem statement}
Let $q>0$ be an integer, let $G=(V,E)$ be a graph, let $S\subset V$ and let $\phi:S\to [q]:=\{1,\ldots,q\}$. 
We call the triple $(G,S,\phi)$ a \emph{partially $q$-colored graph}. 
Often we will just say that $G$ is a partially $q$-colored graph, omitting the reference to $S$ and $\phi$. 
A vertex $v\in S$ will be called \emph{pinned}; any vertex not contained in $S$ will be called a \emph{free} vertex. 
We say that a color $j\in [q]$ is \emph{blocked at $v$} for a vertex $v\in V$ if either $v$ is pinned, or if $v$ has a neighbor in $S$ which is assigned the color $j$ by $\phi$, otherwise color $j$ is called \emph{free at $v$}.

The partition function of the Potts model of a partially $q$-colored graph $G=(G,S,\phi)$ is defined as
\begin{align}\label{eq:def pf partial}
    Z_G(w):=\sum_{\substack{\psi:V(G)\to [q]\\ \psi_{|S}=\phi}} w^{m(\psi)}.
\end{align}
Note that we remove the $q$ from the argument of $Z_G$ compared to~\eqref{eq:def pf} since $q$ is already implicit in $G$.

Given a partially $q$-colored graph $(G,S,\phi)$, 
we can always assume that each pinned vertex $v\in S$ is a leaf of $G$ (i.e. has degree $1$) without changing the partition function, by iteratively replacing each pinned vertex $v$ by $d=\deg(v)$ copies of it, $v_1,\ldots,v_d$, connecting each of them to a unique neighbor of $v$. 
We denote by $\mathcal{G}_{\Delta,q}^\bullet$ the set of pairs $(G,v)$, where $G$ is a connected partially $q$-colored graph of maximum degree $\Delta$ and $v$ is a free vertex of $G$ and where the pinned vertices of $G$ are all leaves and form an independent set.
For such a pair let us define the vector ${\bf c}_{G,v}\in\mathbb{N}_{\geq 0}^q$, where the $i$th coordinate, $c_{G,v;i}$, denotes the number of pinned neighbors of $v$ that are colored with color $i$. We refer to ${\bf c}_{G,v}$ as the vector of blocked colors at $v$.
As a convention, we will write vectors in boldface, while entries of vectors are denoted in plain typeface.

If $w\geq 0$ and if $Z_G(w)\neq 0$ there is an associated probability measure, $\pr_{G,w}$, on the collection of all colorings $\psi:V\to [q]$ that coincide with $\phi$ on $S$, whose probability mass function is defined by
\begin{align*}
    \mu_{G,w}(\psi):=\frac{w^{m(\psi)}}{Z_G(w)}.
\end{align*}
We will use capital letters to denote random variables. In particular, we denote the probability that vertex $v$ is assigned color $j$ when sampling a coloring from this distribution by $\pr_{G,w}[\Phi(v)=j]$. 
When $w$ is clear from the context, we often simply write $\pr_{G}$ instead of $\pr_{G,w}$.

For $w\in \mathbb{C}$ and a free vertex $v$ of a partially $q$-colored graph $G$ we consider the ratio
\[
\tilde{R}_{G,v;i,j}(w):=\frac{Z^i_{G,v}(w)}{Z^j_{G,v}(w)}, 
\]
as a rational function in $w$. Here $Z^j_{G,v}(w)$ denotes the sum~\eqref{eq:def pf partial} restricted to those $\psi$ that assign color $j$ to the vertex $v$. Note that $\tilde{R}_{G,v;i,j}(w)$ only depends on the connected component of $G$ that contains $v$.

To prove that $Z_G(w)\neq 0$ for some $w\in \mathbb{C}$ and a partially $q$-colored graph, it suffices to inductively show that $Z^j_{G,v}(w)\neq 0$ for some color $j\in [q]$ and that the ratios $\tilde{R}_{G,v;i,j}(w)$ ($i\in [q]$) pairwise make a small angle.
In~\cite{BDPR21} this is done via a direct recursive approach, by showing that these ratios are trapped in a certain set in the complex plane. 
In~\cite{LSS2Delta} this is done via a clever indirect approach by showing that for fixed $w\in [0,1]$ and $\tilde w$ close enough to $w$, we have that the perturbed ratios $\tilde{R}_{G,v;i,j}(\tilde w)$ are close to the original ratios $\tilde{R}_{G,v;i,j}(w)$ and in particular lie close to the real axis and hence make a pairwise small angle.
This is also the approach we follow in the present paper.
To do so, we will change coordinates and work with \emph{log-ratios}. Define
\begin{align*}
R_{G,v;i,j}(\tilde w):=\log(\tilde{R}_{G,v;i,j}(\tilde w)),
\end{align*}
under the implicit assumption that $Z_{G,v}^i(\tilde w)$ and $Z_{G,v}^j(\tilde w)$ are both not equal to $0$ and where we fix the branch of the logarithm that is real valued on the positive real line.

For $(G,v)\in \mathcal{G}^{\bullet}_{\Delta,q}$ we denote by $(\overline{G},v)\in \mathcal{G}^{\bullet}_{\Delta,q}$ the rooted partially $q$-colored graph obtained from $(G,v)$ by removing all pinned neighbors of $v$ from $G$. 
We often just write $\overline{G}$ in case $v$ is clear from the context.
The \emph{free degree} of a vertex is the number of free neighbors of that vertex.
We next state an expanded version of our main theorem.

\begin{theorem}\label{thm:main expanded}
There exists a constant $\eta\geq 0.002$ such that for all integers $\Delta\geq 3$, $q\geq (2-\eta)\Delta$ there exist $\varepsilon_1>0$ and $\varepsilon_2>0$, such that if $(G,v)\in \mathcal{G}^{\bullet}_{\Delta,q}$ where $v$ has free degree at most $\Delta-1$, then for any colors $i,j\in [q]$, any $w\in [0,1]$ and any $\tilde w\in B(w,\varepsilon_1)$,
\begin{align}\label{eq:bounded log ratios}
Z_{G}(\tilde w)\neq 0\quad\text{and}\quad  |R_{\overline{G},v;i,j}(\tilde w)-R_{\overline{G},v;i,j}(w)|\leq \varepsilon_2.
\end{align}
\end{theorem}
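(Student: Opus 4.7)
The strategy is to prove Theorem~\ref{thm:main expanded} by strong induction on the number of free vertices of $G$, maintaining both conclusions in parallel. In the base case, $v$ is the only free vertex of $G$: since pinned vertices are leaves and $G$ is connected, $G$ is a star with centre $v$, so $\overline{G}$ reduces to the isolated vertex $v$, giving $R_{\overline{G},v;i,j}\equiv 0$, and $Z_G(\tilde w)=\sum_{\ell\in[q]}\tilde w^{c_{G,v;\ell}}$ has at least $q-\Delta\geq 1$ terms equal to $1$, so it is strictly positive on $[0,1]$ and nonzero in a small complex neighbourhood.

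For the induction step, let $u_1,\dots,u_d$ with $d\leq\Delta-1$ be the free neighbours of $v$. Summing the partition function over the colour of $v$ and then over the colours of the $u_k$'s yields a recursion of the shape
\[
\tilde R_{\overline{G},v;i,j}(\tilde w) \;=\; \prod_{k=1}^d \frac{1-(1-\tilde w)\,P_k^i(\tilde w)}{1-(1-\tilde w)\,P_k^j(\tilde w)},
\]
where $P_k^\ell(\tilde w)=\pr_{G_k,\tilde w}[\Phi(u_k)=\ell]$ and $G_k$ is the component of $\overline G-v$ containing $u_k$ (for the case where several $u_k$'s share a component, one obtains an analogous identity by telescoping — pinning one neighbour at a time). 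Since the free degree of $u_k$ in $G_k$ is at most $\Delta-1$ and $G_k$ has strictly fewer free vertices than $G$, the induction hypothesis applies to each $(G_k,u_k)$: we get $Z_{G_k}(\tilde w)\neq 0$, and the log-ratios at $u_k$ in $\overline{G_k}$ lie within $\varepsilon_2$ of their values at $w$, which translates into $|P_k^\ell(\tilde w)-P_k^\ell(w)|=O(\varepsilon_2)$.

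Zero-freeness of $Z_G(\tilde w)$ then follows by factoring $Z_G=Z^1_{\overline G,v}\cdot\sum_\ell \tilde w^{c_{G,v;\ell}}\exp(R_{\overline G,v;\ell,1})$: the factor $Z^1_{\overline G,v}=\prod_k Z_{G_k}(\tilde w)\,(1-(1-\tilde w)P_k^1(\tilde w))$ is nonzero by the induction hypothesis and continuity, while the cofactor is a positive-weight combination of exponentials lying in a cone of aperture at most $2\varepsilon_2$ about the positive real axis, and hence cannot cancel once $\varepsilon_2$ is small enough. The remaining burden is the log-ratio estimate at $v$: taking logs and applying a first-order expansion, each summand contributes at most $C_1(w,P_k)\,\varepsilon_1+C_2(w,P_k)\,\varepsilon_2$, where $C_2$ is essentially proportional to the marginal $P_k^\ell(w)$. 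The induction closes provided $\sum_{k=1}^d C_2(w,P_k)<1$ strictly, with enough slack to absorb the $\varepsilon_1$-contribution.

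The heart of the proof is establishing this contraction. For $q\geq 2\Delta$, the marginal bound $P_k^\ell(w)\leq 1/(d_{u_k}+2)$ supplied by condition~$(\star)$, combined with $d\leq\Delta-1$, yields a contraction factor safely below $1$ and recovers the Liu--Sinclair--Srivastava theorem via a shorter route. To break the $2\Delta$ barrier, I would exploit that the marginal bound fails only at structurally rigid free neighbours: a large $P_k^\ell(w)$ forces many neighbours of $u_k$ in $G_k$ to block colour $\ell$, which in turn constrains the blocking profile in the two-hop neighbourhood of $v$. Performing one extra level of expansion at each such \emph{bad} neighbour and optimising the resulting inequalities over the discrete space of local configurations produces a deficit of $\eta\Delta$ available colours that compensates for the failed global marginal bound. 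The main obstacle is the combinatorial case analysis needed to keep track of all local configurations simultaneously, and the modest value $\eta\geq 0.002$ reflects how tightly the two-hop bonus has to be balanced against the weakened global bound; sharper bounds appear to require either a deeper local expansion or a refined contraction metric.
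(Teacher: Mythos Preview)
Your overall architecture---induction on the number of free vertices, the base case, telescoping, and a contraction driven by marginal probabilities---matches the paper's. The $\eta=0$ argument is essentially correct in outline, though the product recursion you write is only valid when the $u_k$ lie in distinct components of $\overline G-v$; in the genuine telescoping the graphs $G_k$ each carry pinned leaves of colour $\ell_1$ or $\ell_2$ at the \emph{other} $u_j$'s, so the relevant marginals are taken in different partially coloured graphs for different $k$, not in a single $G_k$. The paper handles this by working directly with the log-ratios and the function $F_{w,\mathbf c}$, whose gradient at the real log-ratio vector \emph{is} the vector of marginal differences (Lemma~\ref{lem:gradient for real w}); the key step (Lemma~\ref{lem:replace by prob}) then collapses the inner product $\langle\mathbf P_{G_i,v_i},\mathbf x\rangle$ to a single marginal times $\|\mathbf x\|_\infty$ (or $\|\widehat{\mathbf x}\|_\infty$), which is what produces the clean $(1-w)/(q-\Delta)$ per-neighbour contraction.

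For $\eta>0$ there is a genuine gap. Your plan---``one extra level of expansion at bad neighbours''---does not by itself close the induction, because the information you extract at depth two still has to be fed back through a contraction whose per-neighbour factor can exceed $1/\Delta$. The paper's actual mechanism is a \emph{strengthened induction hypothesis}: alongside the bound (i) $|R_{\overline G,v;i,j}(\tilde w)-R_{\overline G,v;i,j}(w)|\le\frac{(1-w)d+2/3}{\Delta}\varepsilon_2$, they carry the coupled bound (ii) $\pr_{G^{+k},w}[\Phi(v)=j]\cdot|R_{\overline G,v;i,j}(\tilde w)-R_{\overline G,v;i,j}(w)|\le\varepsilon_2/\Delta$. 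Statement (ii) is what gets plugged into the telescoping sum at the next level, and it is proved by a case analysis (assumptions (A1)--(A8)): if the marginal at $v$ is large then $w$ is small, colour $j$ is free at $v$, the free degree is near $\Delta$, most neighbours have $j$ blocked, and the neighbourhood induces a dense subgraph; under these forced constraints refined marginal bounds (Corollaries~\ref{cor: few blocked} and~\ref{cor:sparse neighborhood}) and a careful choice of the telescoping order show the log-ratio difference at $v$ must itself be small enough to compensate. Without adding this product statement to the induction, your scheme has no way to trade a bad marginal at $v_i$ against a small log-ratio difference at $v_i$, which is exactly what is needed once $q<2\Delta$.
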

\begin{remark}\label{rem:dependence}
It follows from our proof that we can take $\varepsilon_1\geq C \Delta^{-4}$ for some constant $C>0$.
For $\eta=0$ this improves on the size of the zero-free region given by Liu, Sinclair and Srivastava~\cite{LSS2Delta} who proved a zero-free region around $[0,1]$ of width $C'\Delta^{-16}$ for some constant $C'>0$.

The consequences of this improvement for the running time for the algorithm in Corollary~\ref{cor:main alg} are limited though. 
The running time can be seen to be bounded by $(n/\varepsilon)^{O(\log(\Delta q)\exp(O(\Delta^4 )))}$  by combining~\cite[Lemma 2.2.3]{Barbook} and~\cite{PatReg17}.
\end{remark}
Below, we give an outline of our proof of this result; the actual proof can be found in Section~\ref{sec:proof eta>0}. 
First, we use it to deduce Theorem~\ref{thm:main}.

\begin{proof}[Proof of Theorem~\ref{thm:main}]
Let $\varepsilon_1$ be as in the statement of Theorem~\ref{thm:main expanded}, fix $w\in [0,1]$ and let $\tilde{w}\in B(w,\varepsilon_1)$.
By Theorem~\ref{thm:main expanded}, it suffices to prove that $Z_G(q;\tilde w)\neq 0$ if $G$ is $\Delta$-regular.
To cover the case where all the vertices of $G$ have degree exactly $\Delta$, and hence are not pinned, we will use the symmetry of the model between the colors.
We may further assume that $G$ is connected since the partition function factors over connected components.

First we claim that the partition function $Z_{G,v}^1(q;\tilde{w})$ is non-zero. 
Indeed, $Z_{G,v}^1(q;\tilde{w})$ is equal to the partition function of the partially $q$-colored graph $H$ obtained from $G$ by replacing $v$ with vertices $v_1,\dots,v_{\Delta}$ each of them colored with color $1$ and where each $v_i$ is connected to a unique neighbor of $v$ in $G$. 
We next claim that $Z_H(\tilde w)\neq 0$. Since each component of $H$ has a vertex of free degree $\Delta-1$, namely a neighbor of some $v_i$, and since by construction we have that the pinned vertices of $H$ form an independent set and are all leaves, by Theorem~\ref{thm:main expanded} we indeed have $Z_H(\tilde w)\neq 0$ (because the partition function is multiplicative over the components of $H$).

Now, since each vertex in $G$ is free, it follows by symmetry that
\[
Z_{G,v}^1(q;\tilde{w}) = Z_{G,v}^2(q;\tilde{w}) = \ldots = Z_{G,v}^q(q;\tilde{w}).
\]
Therefore,
\[
Z_{G}(q;\tilde{w}) = \sum_{i \in [q]} Z_{G,v}^i(q;\tilde{w}) = q \cdot Z_{G,v}^q(q;\tilde{w})=qZ_H(\tilde w) \neq 0.
\]
\end{proof}

\subsection{Outline of proof and more definitions}\label{subsec:outline}
To prove Theorem~\ref{thm:main expanded}, we need to show that the difference between $R_{G,v;\ell_1,\ell_2}(w)$ and $R_{G,v;\ell_1,\ell_2}(\tilde w)$ is smaller than $\varepsilon_2$ for each $(G,v)\in \mathcal{G}^\bullet_{\Delta,q}$ and any pair of colors $\ell_1,\ell_2$.
We do this by induction on the number of free vertices, by expanding the log-ratios of $(G,v)$ as a function applied to log-ratios of partially $q$-colored graphs obtained from $G$ with fewer free vertices. 
We will next describe some of the technical ingredients that were also used in some form in~\cite{BDPR21,LSS2Delta} and the main new ideas of our proof, after which we will give an overview of the remainder of the paper.

To be able to control the difference between the log-ratios inductively, a certain \emph{telescoping} procedure is crucial for us.
Fix two distinct colors $\ell_1,\ell_2\in [q]$.
Choose an ordering of the neighborhood $N(v)$ of $v$, $v_1,\ldots,v_{\deg(v)}$.
Let $\hat{G}_i$ be the partially $q$-colored graph obtained from $G-v$ by adding a leaf to each vertex $v_j$ with $j\neq i$ such that for $j<i$ that leaf is colored with color $\ell_2$ and for $j>i$ it is colored with color $\ell_1$; the leaf connected to $v_i$ is free and is denoted by $\hat{v}_i$.
Assume that $Z_{\hat{G}_i,\hat{v}_i}^\ell(\tilde w)\neq 0$ for all $\ell\in [q]$ and $i=1,\ldots, \deg(v)$.
Then by standard properties of the logarithm,
\begin{equation}\label{eq:telescoping before}
R_{G,v;\ell_1,\ell_2}=\sum_{i=1}^{\deg(v)}R_{\hat{G}_i,\hat{v}_i;\ell_1,\ell_2}.
\end{equation}
See Figure~\ref{fig:telescoping} for a proof by pictures of this identity.

\begin{figure}[t]
  \centering
  \includegraphics[width=0.9\textwidth]{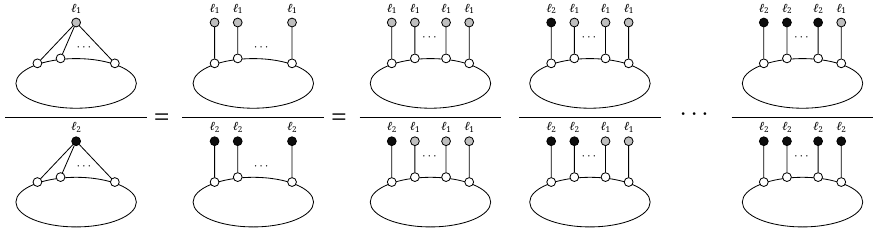}
 
\caption{Pictorial depiction of how the ratio $Z^{\ell_1}_{G,v}(w) \big/Z^{\ell_2}_{G,v}(w)$ is expressed as a telescoping product of the ratios $Z^{\ell_1}_{\hat{G}_i,\hat{v}_i}(w) \big/ Z^{\ell_2}_{\hat{G}_i,\hat{v}_i}(w)$.}
\label{fig:telescoping}
\end{figure}
\begin{cfigure}[t]
    \centering
\includegraphics[width=0.7\textwidth]{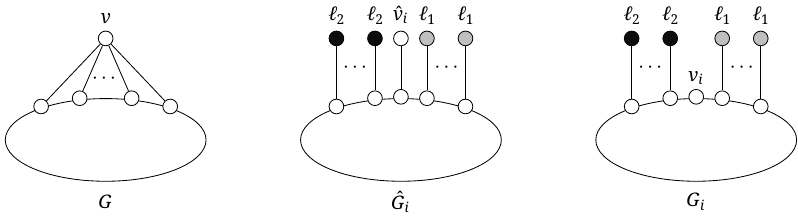}
    \caption
    {An illustration of a graph $(G,v)$ and the graphs $(\hat G_i,\hat v_i)$ and $(G_i,v_i)$ appearing in the telescoping procedure.}
    \label{fig:modified_hat_G_i_graph}
\end{cfigure}

Let us denote by $G_i$ the partially $q$-colored graph obtained from $\hat{G}_i$ by removing the vertex $\hat{v}_i$.
We will say that the graphs $G_i$ are obtained from $G$ via the \emph{telescoping procedure} with respect to the colors $\ell_1$ and $\ell_2$ and the ordering of $N(v)$. See Figure~\ref{fig:modified_hat_G_i_graph} for an illustration of this.

For concreteness, we will continue the discussion for the case $\ell_1=1$ and $\ell_2=q$. 
By symmetry, we may of course always relabel the colors so that this is without loss of generality.

Define for ${\bf c}\in \mathbb{N}_{\geq 0}^q$, ${\bf x}\in \mathbb{C}^{q-1}$ and $\tilde w\in \mathbb{C}$,
\begin{align}
P_{\bf c}({\tilde w},{\bf x})={\tilde w}^{c_1+1}e^{x_1}+\sum_{j=2}^{q-1} {\tilde w}^{c_j}e^{x_j}+{\tilde w}^{c_q}, \label{eq:def P}
\\
Q_{\bf c}({\tilde w},{\bf x})={\tilde w}^{c_1}e^{x_1}+\sum_{j=2}^{q-1} {\tilde w}^{c_j}e^{x_j}+{\tilde w}^{c_q+1}.\label{eq:def Q}
\end{align}
We tend to omit the subscript $\bm{c}$ if it is clear from the context. 
We now define the vectors ${\bf R}^i(\tilde w)\in \mathbb{C}^{q-1}$ by $R^i_{j}(\tilde w)=R_{\overline{G}_i,v_i;j,q}({\tilde w})$ for $j=1,\ldots,q-1$ (implicitly assuming that these log-ratios are well-defined), and ${\bf c}^i={\bf c}_{G,v_i}$. 
Then we observe that 
\begin{align*}
P_{{\bf c}^i}({\tilde w},{\bf R}^i(\tilde w))= \frac{Z_{{\hat G}_i,\hat{v}_i}^1(\tilde w)}{Z_{\overline{G}_i,v_i}^q(\tilde w)}  \text{ and }
Q_{{\bf c}^i}({\tilde w},{\bf R}^i(\tilde w))= \frac{Z_{{\hat G}_i,\hat{v}_i}^q(\tilde w)}{Z_{\overline{G}_i,v_i}^q(\tilde w)},  
\end{align*}
and therefore by equation~\eqref{eq:telescoping before} we have 
\begin{equation}\label{eq:telescoping after}
R_{G,v;1,q}=\sum_{i=1}^{\deg(v)} \log\left(\frac{P_{\bf c}({\tilde w},{\bf R}^i)}{Q_{\bf c}({\tilde w},{\bf R}^i)}\right).
\end{equation}

Next define the function $F_{{\tilde w},{\bf c}}:\mathbb{C}^{q-1}\setminus \{{\bf x}\mid P_{\bf c}({\tilde w,\bf x})=0 \text{ or } Q_{\bf c}(\tilde w,{\bf x})= 0\} \to \mathbb{C}$ by
\begin{equation}\label{eq:def Fwd}
F_{{\tilde w},\bf c}({\bf x})=\log\left(\frac{P_{\bf c}({\tilde w},{\bf x})}{Q_{\bf c}({\tilde w},{\bf x})}\right).
\end{equation}
Using this function we can succinctly express~\eqref{eq:telescoping after} as 
\begin{align}
R_{G,v;1,q}(\tilde w)=\sum_{i=1}^{\deg(v)}F_{\tilde w,{\bf c}^i}({\bf R}^i(\tilde w)).\label{eq:function log ratio}
\end{align}

We  now use~\eqref{eq:function log ratio} to express the difference between $R_{G,v;1,q}(w)$ for $w\in [0,1]$ and its perturbation $R_{G,v;1,q}(\tilde w)$ (for $\tilde w$ near $w$) as follows
\begin{align}
&R_{G,v;1,q}(w)-R_{G,v;1,q}(\tilde w)= \sum_{i=1}^{\deg(v)} \left (F_{w,{\bf c}^i}({\bf R}^i(w))-F_{\tilde w,{\bf c}^i}({\bf R}^i(\tilde w)\right)=\nonumber
\\
& \sum_{i=1}^{\deg(v)} \left (F_{w,{\bf c}^i}({\bf R}^i(w))-F_{w,{\bf c}^i}({\bf R}^i(\tilde w))\right)+\left(F_{w,{\bf c}^i}({\bf R}^i(\tilde w))-F_{\tilde w,{\bf c}^i}({\bf R}^i(\tilde w))\right ).\label{eq:difference}
\end{align}
It is not hard to see that we can make $|F_{w,{\bf c}^i}({\bf R}^i(\tilde w))-F_{\tilde w,{\bf c}^i}({\bf R}^i(\tilde w))|$ arbitrarily small by choosing $\tilde w$ close enough to $w$ by continuity of $F_{w,{\bf c}^i}$ as a function of $w$, provided the vectors ${\bf R}^i(\tilde w)$ lie in a bounded set. We will formally verify this in Section~\ref{sec:analytic bounds}.

Bounding $F_{w,{\bf c}^i}({\bf R}^i(w))-F_{w,{\bf c}^i}({\bf R}^i(\tilde w))$ is much more work and to do so we will use the gradient of $F_{w,{\bf c}^i}$. 
We have
\begin{align}
  F_{w,{\bf c}^i}&({\bf R}^i(w))-F_{w,{\bf c}^i}({\bf R}^i(\tilde w))=\label{eq:difference in terms of gradient}
  \\
  & \int_{0}^{1}\langle \nabla F_{w,{\bf c}^i}\left(t{\bf R}^i(w) +(1-t){\bf R}^i(\tilde w)\right),{\bf R}^i(w)-{\bf R}^i(\tilde w)\rangle  dt,\nonumber
\end{align}
where $\langle \cdot,\cdot\rangle$ denotes the standard inner product on $\mathbb{C}^{q-1}.$

This motivates us to investigate the gradient of $F_{w,{\bf c}}$. 
For this purpose let us define for a partially $q$-colored graph $H$ with a free vertex $v$ and a color $\ell\in[q]$ the partially $q$-colored graph $H^{+\ell}$ by attaching a pinned leaf of color $\ell$ to $v$. 
For $w\in [0,1]$ the vector $\textbf{P}_{H,v}(w)$ in $\mathbb{R}^{q-1}$ is defined by 
\begin{equation}\label{eq:define P}
{P}_{H,v;j}(w)=\pr_{H^{+1},w}[\Phi(v)=j]-\pr_{H^{+q},w}[\Phi(v)=j].
\end{equation}

The next lemma tells us that the gradient of $F_{w,{\bf c}_{H,v}}$ evaluated at a real log-ratio vector is exactly the vector ${\bf P}_{H,v}(w)$.

\begin{lemma}\label{lem:gradient for real w}
Let $w\in[0,1]$.
Let $(H,v)\in\mathcal{G}_{\Delta,q}^\bullet$ and let ${\bf c}={\bf c}_{H,v}$ be the vector of blocked colors at $v$.  
Let ${\bf R}\in \mathbb{R}^{q-1}$ be the vector defined by $R_{j}=R_{\overline{H},v;j,q}(w)$.
Then 
\[
\nabla F_{w,{\bf c}}({\bf R})={\bf P}_{ H,v}(w).
\]
\end{lemma}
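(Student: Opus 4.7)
The plan is to prove the lemma by direct computation: first reinterpret $P_{\bf c}(w, {\bf R})$ and $Q_{\bf c}(w, {\bf R})$ as partition function ratios, then differentiate $F_{w, {\bf c}}$ coordinatewise and recognize each resulting quotient as a marginal probability.

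For the first step, I would note that since the pinned neighbors of $v$ in $H$ are leaves with exactly $c_j$ of them carrying color $j$, we have $Z^j_{H,v}(w) = w^{c_j} Z^j_{\overline H, v}(w)$ for every $j \in [q]$. Adding a pinned leaf of color $\ell$ to $v$ multiplies $Z^j_{H, v}(w)$ by $w$ when $j = \ell$ and leaves it unchanged otherwise. Combined with the identity $e^{R_j} = Z^j_{\overline H, v}(w)/Z^q_{\overline H, v}(w)$ (setting $R_q = 0$) and a term-by-term match with the formulas defining $P_{\bf c}$ and $Q_{\bf c}$, this yields
\[
P_{\bf c}(w, {\bf R}) = \frac{Z_{H^{+1}}(w)}{Z^q_{\overline H, v}(w)}, \qquad Q_{\bf c}(w, {\bf R}) = \frac{Z_{H^{+q}}(w)}{Z^q_{\overline H, v}(w)}.
\]

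For the second step, I would differentiate $F_{w, {\bf c}} = \log P_{\bf c} - \log Q_{\bf c}$ coordinate by coordinate. For $j = 1$ the partials are $\partial_{x_1} P_{\bf c} = w^{c_1+1} e^{x_1}$ and $\partial_{x_1} Q_{\bf c} = w^{c_1} e^{x_1}$; evaluating at ${\bf R}$ and multiplying numerator and denominator by $Z^q_{\overline H, v}(w)$, the numerator of $\partial_{x_1} P_{\bf c}/P_{\bf c}$ becomes $w \cdot w^{c_1} Z^1_{\overline H, v}(w) = Z^1_{H^{+1},v}(w)$ and the denominator becomes $Z_{H^{+1}}(w)$, so the quotient equals $\pr_{H^{+1}, w}[\Phi(v) = 1]$; the same bookkeeping gives $\partial_{x_1} Q_{\bf c}/Q_{\bf c}\big|_{\bf R} = \pr_{H^{+q}, w}[\Phi(v) = 1]$, using that $1 \neq q$ so no extra factor of $w$ appears. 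For $j \in \{2, \ldots, q-1\}$ both partials equal $w^{c_j} e^{x_j}$, and the same argument identifies them with $\pr_{H^{+1}, w}[\Phi(v) = j]$ and $\pr_{H^{+q}, w}[\Phi(v) = j]$ respectively (here $j$ is neither $1$ nor $q$, so adding either leaf gives no extra factor). Subtracting and using the definition in~\eqref{eq:define P} reproduces ${\bf P}_{H, v}(w)$ component by component.

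Since the statement is essentially a calculation, I do not expect any real obstacle. The only points requiring care are (a) the asymmetry at the $j = 1$ coordinate, where $\partial_{x_1} P_{\bf c}$ and $\partial_{x_1} Q_{\bf c}$ differ by a factor of $w$, reflecting the extra monochromatic edge created by the leaf of color $1$ in $H^{+1}$, in contrast to $j \in \{2, \ldots, q-1\}$ where the two partials coincide, and (b) keeping straight which attached leaf contributes an extra factor of $w$ when evaluating $Z^j_{H^{+1},v}$ versus $Z^j_{H^{+q},v}$; the identification with marginals then falls out automatically.
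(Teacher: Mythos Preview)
Your proposal is correct and follows essentially the same approach as the paper: a direct computation of the partial derivatives of $F_{w,{\bf c}}=\log P_{\bf c}-\log Q_{\bf c}$, combined with the identification of $P_{\bf c}(w,{\bf R})$ and $Q_{\bf c}(w,{\bf R})$ as the ratios $Z_{H^{+1}}(w)/Z^q_{\overline H,v}(w)$ and $Z_{H^{+q}}(w)/Z^q_{\overline H,v}(w)$. The only cosmetic difference is that the paper differentiates the quotient $P/Q$ in one step (producing a $\delta_{1,j}$ term) before substituting, whereas you treat $\log P$ and $\log Q$ separately and identify each quotient directly as a marginal probability; the content is identical.
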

\begin{proof}
Writing $P=P_{\bf c}(w,{\bf x})$ and $Q=Q_{\bf c}(w,{\bf x})$, we have $F_{w,\bf c}({\bf x})=\log\left(\frac{P}{Q}\right)$ and therefore by standard rules of partial derivatives,
\begin{align*}
    \frac{\partial F_{w,{\bf c}}}{\partial x_j}&= \frac{Q}{P} \cdot\frac{Q\tfrac{\partial P}{x_j}-P\tfrac{\partial Q}{\partial x_j}}{Q^2}\\
    &= \frac{Qw^{c_j}e^{x_j} + \delta_{1,j}Qw^{c_j}e^{x_j}(w-1) - Pw^{c_j}e^{x_j}  }{PQ}\\
   & =\frac{(w-1)w^{c_1}e^{x_1}}{P}\delta_{1,j}+\frac{w^{c_j}e^{x_j}(Q-P)}{PQ}  \\
   & =\frac{(w-1)w^{c_1}e^{x_1}}{P}\delta_{1,j}+\frac{w^{c_j}e^{x_j}}{P}-\frac{w^{c_j}e^{x_j}}{Q}.
\end{align*}
Noting that $w^{c_j}e^{R_j}=\tfrac{Z_{H,v}^j(w)}{Z_{H,v}^q(w)}$ and hence 
\[
P=w\frac{Z_{H,v}^1(w)}{Z_{H,v}^q(w)}+\sum_{j=2}^q\frac{Z_{H,v}^j(w)}{Z_{H,v}^q(w)}=\frac{Z_{H^{+1},v}(w)}{Z_{H,v}^q(w)},
\]
and similarly,
\[
Q=w\frac{Z_{H,v}^q(w)}{Z_{H,v}^q(w)}+\sum_{j=1}^{q-1}\frac{Z_{H,v}^j(w)}{Z_{H,v}^q(w)}=\frac{Z_{H^{+q},v}(w)}{Z_{H,v}^q(w)},
\]
we see that
\[
  \frac{\partial F_{w,{\bf c}}}{\partial x_j}({\bf R})=\pr_{H^{+1},w}[\Phi(v)=1]-\pr_{H^{+q},w}[\Phi(v)=1]
\]
i.e. $\nabla F_{w,{\bf c}}({\bf R})={\bf P}_{H,v}(w)$, as desired.
\end{proof}

To use~\eqref{eq:difference in terms of gradient}, we actually need to understand the gradient of $F_{w,{\bf c}}$ evaluated at the vector $ tR_{\overline{G}_i,v_i;j,q}(\tilde{w}) \allowbreak + (1-t)R_{\overline{G}_i,v_i;j,q}(\tilde{w})$ for some small perturbation $\tilde{w}$ of $w$. 
However, it is not difficult to see that for $\tilde{w}$ small enough and $q>\Delta+1$, this gradient can be made arbitrarily close to ${\bf P}_{{G}_i,v_i}(w)$ independent of the graph.
We verify this formally in Section~\ref{sec:analytic bounds}.

So to bound \eqref{eq:difference in terms of gradient}, the essential ingredient is to bound the absolute value of the inner product of the vector ${\bf P}_{{G}_i,v_i}(w)$ with the vector ${\bf R}_{\overline{G}_i,v_i;j,q}(w)-{\bf R}_{\overline{G}_i,v_i;j,q}(\tilde w)$ for each $i=1,\ldots,d$.
This motivates us to develop bounds on the marginal probability of the root vertex, which we will do in Section~\ref{sec:prob bounds 1} and Section~\ref{sec:more marginal bounds}.

With the bounds from Section~\ref{sec:prob bounds 1} and Section~\ref{sec:analytic bounds} we give a proof of Theorem~\ref{thm:main} for $\eta=0$ in Section~\ref{sec:proof eta=0}. 
We do this for two reasons. 
First of all, it makes it easier to see the structure of the technically more involved proof for the proof when $\eta>0$, and secondly, it gives an alternative and arguably more transparent proof of the result of Liu, Sinclair, and Srivastava~\cite{LSS2Delta}. 
The ingredients that we use are pretty much the same as in~\cite{LSS2Delta}; the main difference is that we do not bound the real part and imaginary part of the log-ratio difference separately as is done in~\cite{LSS2Delta}, but our novel contribution is to make use of the symmetry between the colors to bound the absolute value of the inner product of the vector ${\bf P}_{ G,v}(w)$ with the vector ${\bf R}^i(w)-{\bf R}^i(\tilde w)$ in terms of a marginal probability of the root vertex times the maximum of 
\begin{equation}\label{eq:ratio difference for induction}
|{R}_{\overline{G}_i,v_i,j,q}(w)-{R}_{\overline{G},v;j,q}(\tilde w)| \textrm{ and } |{R}_{\overline{G}_i,v_i;j,1}(w)-{R}_{\overline{G},v;j,1}(\tilde w)|,
\end{equation}
over all $j=1,\ldots,q$ (see Lemma~\ref{lem:replace by prob}), which in our proof we show is bounded by $\varepsilon_2$, by induction.
Since for $q\geq 2\Delta$ this marginal probability is easily seen to be bounded by $1/\Delta$ (see Lemma~\ref{lem:prob basic} below), this allows to conclude that $|R_{G,v;1,q}(w)-R_{G,v;1,q}(\tilde w)|$ is again bounded by $\varepsilon_2$. See Section~\ref{sec:proof eta=0} for the details.

In Section~\ref{sec:proof eta>0} we finally give a proof of Theorem~\ref{thm:main expanded} for $\eta>0$.
Here we build on the approach for $\eta=0$ and carefully make use of the structure of the local neighborhood of the vertex $v$ to show that either both terms in~\eqref{eq:ratio difference for induction} are actually smaller than $\varepsilon_2$ or that we can use sharper bounds on the marginal probability obtained in Section~\ref{sec:more marginal bounds} that are valid in a more restricted setting.

In an appendix, we collect an alternative proof of a proposition found in Section~\ref{sec:analytic bounds} that is more hands on, but has the advantage that it gives concrete dependencies on how small $\varepsilon_1$ should be in terms of $\Delta$.
We conclude with some questions and remarks in Section~\ref{sec:conclusion}.

\section{Basic bounds on marginal probabilities of the root vertex}\label{sec:prob bounds 1}
Let $\Delta$ and $q>\Delta+1$ be positive integers and let $w\in [0,1]$.
Let $G$ be a partially $q$-colored graph of maximum degree at most $\Delta$ and let $v$ be a vertex of $G$ of degree $d$. 
To prove our main result we will need bounds on the marginal probability of the root vertex $\pr_{G,w}[\Phi(v)=j]$. 
In this section we collect upper and lower bounds on this quantity that are well known in the literature, but we provide proofs for the sake of completeness and because we will build on these proofs later on.
We start with an upper bound.

\begin{lemma}\label{lem:prob basic}
Let $\Delta$ and $q>\Delta+1$ be positive integers.
Assume that $G$ is a partially $q$-colored graph of maximum degree at most $\Delta$ and let $v$ be a vertex of $G$ of degree $d$  and free degree $f$. 
Denote by $b$ the number of blocked colors at $v$.
Then for any $w\in [0,1]$ any color $j$,
\begin{align*}
    \pr_{G,w}[\Phi(v)=j]\leq \frac{w^{c_j}}{q-(f+b)+(f+b)w},
\end{align*}
where $c_j={c}_{G,v;j}$. 
In particular,
\begin{align*}
    \pr_{G,w}[\Phi(v)=j]\leq \frac{w^{c_j}}{q-d+dw}.
\end{align*}
\end{lemma}
\begin{proof}
We can expand $\pr_{G,w}[\Phi(v)=1]$ over the colorings of the neighbors of $v$.
This yields
\begin{align}\label{eq:expand prob}
    \pr_{G,w}[\Phi(v)=j]=\sum_{\kappa:N(v)\to [q]}\pr_{G,w}[\Phi(v)=j\mid E_\kappa]\pr_{G,w}[E_\kappa],
\end{align}
where $E_\kappa$ denotes the event that the random coloring $\Phi$ agrees with $\kappa$ on the neighbors $N(v)$ of $v$. 
Note that when $w>0$, we always have that $\pr_{G,w}[E_\kappa]>0$, while if $w=0$ we only sum over those $\kappa$ for which $\pr_{G,w}[E_\kappa]>0$.

Now to bound  $\pr_{G,w}[\Phi(v)=j]$, we simply need a bound on $\pr_{G,w}[\Phi(v)=j\mid E_\kappa]$ for any $\kappa$ for which $\pr_{G,w}[E_\kappa]>0$.
Let us fix such a $\kappa$ and denote by $d_i$ for $i\in [q]$ the number of neighbors of $v$ colored with color $i$.
Then by the Markov property and using the convention that $w^0=1$, 
\begin{align}
\pr_{G,w}[\Phi(v)=j\mid E_\kappa]&=\frac{w^{d_j}}{\sum_{i=1}^{q} w^{d_i}}=\frac{w^{d_j}}{q-\sum_{i=1}^q(1-w^{d_i})}\label{eq:prob E_kappa equal}
\\
&\leq \frac{w^{c_j}}{q-(1-w)(b+f)},   \label{eq:prob E_kappa}
\end{align}
where $c_j=c_{G,v;j}$ denotes the number of pinned neighbors with color $j$ in $G$, $b$ denotes the number of blocked colors at $v$, and $f$ the free degree of $v$.
\end{proof}

\begin{remark}
Note that the lemma above is essentially tight for $w=0$ and $j$ such that $c_j=0$. Indeed consider a vertex whose neighborhood is a clique of size $\Delta-1$ such that for each neighbor color $j$ is blocked.  
\end{remark}

Next we provide a lower bound on the marginal probability.
\begin{lemma}\label{lem:prob basic lower}
Let $\Delta$ and $q$ be positive integers, and let $q\ge(1+ \alpha)\Delta+1$ for some $\alpha>0$.
Assume that $G$ is a partially $q$-colored graph of maximum degree at most $\Delta$ and let $v$ be a vertex of $G$. 
Then, for any $w\in [0,1]$ and any color $j\in [q]$ not appearing on the neighbors of $v$, we have
\begin{align*}
    \pr_{G,w}[\Phi(v)=j] \geq \frac{1}{e^{1/\alpha} q}.
\end{align*}
\end{lemma}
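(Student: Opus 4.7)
The plan is to reduce the desired lower bound to a lower bound on the probability that color $j$ is missing from the entire neighborhood of $v$. Starting from the decomposition~\eqref{eq:expand prob}, I would restrict the sum to those $\kappa:N(v)\to[q]$ that do not use color $j$; such $\kappa$ exist because, by hypothesis, no pinned neighbor of $v$ is colored $j$. For any such $\kappa$ with $\pr_{G,w}[E_\kappa]>0$, the spatial Markov formula~\eqref{eq:prob E_kappa} simplifies (the numerator $w^{d_j}$ becomes $1$ since $d_j=0$ and the denominator is at most $q$), yielding $\pr_{G,w}[\Phi(v)=j\mid E_\kappa]\geq \tfrac{1}{q}$. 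Summing over such $\kappa$ gives
\[
\pr_{G,w}[\Phi(v)=j] \;\geq\; \tfrac{1}{q}\,\pr_{G,w}[j\notin \Phi(N(v))].
\]

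Next I would bound the factor on the right. Enumerating the free neighbors of $v$ as $u_1,\dots,u_d$, the chain rule of conditional probability gives the telescoping factorization~\eqref{eq:product expand}, namely $\pr_{G,w}[j\notin \Phi(N(v))]=\prod_{i=1}^d (1-p_i)$, where $p_i := \pr_{G,w}[\Phi(u_i)=j\mid E_i]$ and $E_i$ is the event that color $j$ is avoided on $u_{i+1},\dots,u_d$. Each $p_i$ can be upper bounded by the same argument underlying Lemma~\ref{lem:prob basic}: conditioning on $E_i$ amounts to expanding the marginal of $u_i$ as in~\eqref{eq:expand prob} but summing only over neighborhood colorings consistent with $E_i$, and the bound from~\eqref{eq:prob E_kappa} (with the denominator uniformly at least $q-\Delta+\Delta w$ and the numerator at most $1$) gives $p_i \leq \tfrac{1}{q-\Delta+\Delta w}$. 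The hypothesis $q>\Delta+1$ ensures that all the conditioning events $E_i$ have strictly positive probability, so the chain-rule factorization is legitimate.

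Finally, I would invoke the hypothesis $q-\Delta\geq \alpha\Delta+1$ and $w\in[0,1]$ to conclude $p_i \leq \tfrac{1}{\alpha\Delta+1}$ uniformly in $i$, and combine this with $d\leq \Delta$ to obtain
\[
\prod_{i=1}^d (1-p_i) \;\geq\; \left(1-\tfrac{1}{\alpha\Delta+1}\right)^{\Delta} \;=\; \left(\left(1-\tfrac{1}{\alpha\Delta+1}\right)^{\alpha\Delta}\right)^{1/\alpha} \;\geq\; e^{-1/\alpha},
\]
where the last inequality is the standard estimate $(1-\tfrac{1}{n+1})^n\geq e^{-1}$ applied with $n=\alpha\Delta$. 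Combined with the reduction from the first paragraph, this gives the claimed bound. The proof is largely bookkeeping on top of the ideas already used in Lemma~\ref{lem:prob basic}; the only mildly delicate point is justifying that the telescoping factorization remains valid when one of the $u_i$'s is the vertex we ultimately care about, but this follows immediately from the spatial Markov property and the positivity of the events $E_i$ guaranteed by $q>\Delta+1$.
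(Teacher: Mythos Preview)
Your proposal is correct and follows essentially the same argument as the paper: the reduction to $\pr_{G,w}[j\notin\Phi(N(v))]$ via~\eqref{eq:prob E_kappa}, the telescoping factorization~\eqref{eq:product expand}, the uniform bound $p_i\le \tfrac{1}{q-\Delta+\Delta w}$, and the final estimate $(1-\tfrac{1}{\alpha\Delta+1})^{\alpha\Delta}\ge e^{-1}$ are exactly the steps the paper takes in the paragraphs leading up to the lemma. The only cosmetic difference is that you make the use of $d\le\Delta$ explicit where the paper leaves it implicit in~\eqref{eq:lower bound on not using 1}.
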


\begin{proof}
To prove a lower bound on the marginal probability of the root vertex getting a free color $j$, we again look at~\eqref{eq:expand prob} and note that we can lower bound this by
\begin{align*}
    \pr_{G,w}[\Phi(v)=j] \geq \sum_{\substack{\kappa:N(v)\to [q] \\ j \notin \kappa(N(v))}}\pr_{G,w}[\Phi(v)=j\mid E_\kappa]\pr_{G,w}[E_\kappa].  
\end{align*}
Given $\kappa:N(v)\to [q]$ such that $\pr_{G,w}[E_\kappa]\neq 0$ and $\kappa$ does not use color $j$ we can lower bound $\pr_{G,w}[\Phi(v)=j\mid E_\kappa]$ by $1/q$ by~\eqref{eq:prob E_kappa equal}.
This implies
\begin{align*}
     \pr_{G,w}[\Phi(v)=j] \geq 
     \frac{1}{q}  \pr_{G,w}[j \notin \Phi(N(v))].
\end{align*}
Let $u_1, \ldots, u_{d}$ be the free neighbors of $v$ in $G$. 
We can then express
\begin{align}
    \pr_{G,w}[j \notin \Phi(N(v))] &=    \pr_{G,w}[1 \notin \Phi( \{u_1, \ldots u_{d-1} \}) \mid \Phi(u_d)\neq j] \pr_{G,w}[\Phi(u_d)\neq j] \nonumber
    \\
    &= \prod_{i=1}^d  \pr_{G,w}[ \Phi(u_i) \neq j \mid E_i] \nonumber
    \\
    &=  \prod_{i=1}^d \left( 1- \pr_{G,w}[ \Phi(u_i) = j \mid E_i] \right), \label{eq:product expand}
\end{align}
where $E_i$ denotes the event that the random coloring $\Phi$ does not assign color $j$ to the vertices $u_{i+1}, \ldots, u_d$, which has a nonzero probability since $q>\Delta+1$.
Arguing as above (regardless of the event $E_i$, we can expand the marginal probability as a sum over $\kappa$ as in~\eqref{eq:expand prob}, but now we only sum over those $\kappa$ that do not assign color $j$ to the vertices $u_{i+1},\ldots,u_d$) we can bound 
\begin{align*}
 \pr_{G,w}[ \Phi(u_i) = j \mid E_i] \leq \frac{1}{q-\Delta+\Delta w}.
 \end{align*}
Therefore, assuming $q-\Delta\geq \alpha \Delta+1$ for some $\alpha>0$, we obtain the lower bound 
\begin{align}
\pr_{G}[j \notin \Phi(N(v))] \ge \left( 1 - \frac{1}{q-\Delta+\Delta w} \right)^d \geq \left(\left(1- \frac{1}{\alpha \Delta+1}  \right)^{\alpha \Delta}\right)^{1/\alpha} \ge e^{-1/\alpha}.   \label{eq:lower bound on not using 1}
\end{align}
We conclude that $\pr_{G,w}[\Phi(v)=1]$ is lower bounded by $\tfrac{1}{e^{1/\alpha} q}$.
\end{proof}

\section{The behavior of \texorpdfstring{$F_{w,{\bf c}}$}{F} and \texorpdfstring{$\nabla F_{w,{\bf c}}$}{nabla F}}\label{sec:analytic bounds}

As indicated in Section~\ref{sec:setup}, we need to show that behavior of $F_{w,{\bf c}}$ (as defined in~\eqref{eq:def Fwd}) and $\nabla F_{w,\bf {c}}$ as a function of $w$ is not too wild.

Let $\Delta,q$ be positive integers such that $q\geq (1+\alpha)\Delta+1$ for some $\alpha>0$.
We call a vector ${\bf c}\in \mathbb{N}_{\geq 0}^q$ a \emph{valid color vector} if there exists a rooted graph $(G,v)\in \mathcal{G}^\bullet_{\Delta,q}$ such that ${\bf c}={\bf c}_{G,v}$.
Now fix a valid color vector $\bf c$ and let us define the set 
\begin{equation}
    \mathcal{R}_{\bf c}= \left\{ \left(w,\left(R_{\overline{G},v;j,q}(w)\right)_{j=1,\dots,q-1} \right)~|~(G,v)\in \mathcal{G}^\bullet_{\Delta,q}, {\bf c}={\bf c}_{G,v}, w\in [0,1]\right\}.
\end{equation}
Define $F:\mathcal{R}_{\bf c}\to \mathbb{R}\subset\mathbb{C}$ by $F(w,{\bf R})=F_{w,\bf c}({\bf R})$ for $(w,{\bf R)}\in \mathcal{R}_{\bf c}$.
\begin{lemma}\label{lem: ratio bounds}
Let $\Delta,q$ be positive integers such that $q\geq (1+\alpha)\Delta+1$ for some $\alpha>0$, and let ${\bf c}$ be a valid color vector. 
Then the set $F(\mathcal{R}_{\bf c})$ is bounded.
\end{lemma}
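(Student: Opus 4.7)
The plan is to reinterpret $F(w,{\bf R})$, for $(w,{\bf R}) \in \mathcal{R}_{\bf c}$ coming from some $(G,v) \in \mathcal{G}^\bullet_{\Delta,q}$ with ${\bf c}_{G,v} = {\bf c}$, as the logarithm of an explicit ratio of partition functions of $G$, and then to bound that ratio uniformly using Lemma~\ref{lem:prob basic}. No complex-analytic input will be needed since $w$ is real.

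Concretely, writing $R_j = R_{\overline{G},v;j,q}(w)$ and using $e^{R_j} = Z^j_{\overline{G},v}(w)/Z^q_{\overline{G},v}(w)$ together with the identity $Z^j_{G,v}(w) = w^{c_j}\, Z^j_{\overline{G},v}(w)$ (the factor $w^{c_j}$ accounting for the monochromatic edges between $v$ and its $c_j$ pinned neighbors of color $j$ that are deleted when passing to $\overline{G}$), a direct calculation gives
\[
P_{\bf c}(w,{\bf R}) = \frac{Z_{G^{+1}}(w)}{Z^q_{\overline{G},v}(w)}, \qquad Q_{\bf c}(w,{\bf R}) = \frac{Z_{G^{+q}}(w)}{Z^q_{\overline{G},v}(w)},
\]
where $G^{+k}$ is $G$ with a fresh pinned leaf of color $k$ attached to $v$. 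Hence $F(w,{\bf R}) = \log\bigl(Z_{G^{+1}}(w)/Z_{G^{+q}}(w)\bigr)$, and all the partition functions appearing are strictly positive on $[0,1]$: for $w > 0$ this is immediate, while for $w = 0$ it follows from a greedy extension of the partial coloring (which uses only $q \geq \Delta + 1$).

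The remainder is a short computation. Splitting $Z_{G^{+k}}(w)$ according to the color of $v$ yields $Z_{G^{+k}}(w) = Z_G(w)\bigl(1 - (1-w)\,\pr_{G,w}[\Phi(v) = k]\bigr)$, so
\[
\frac{Z_{G^{+1}}(w)}{Z_{G^{+q}}(w)} = \frac{1 - (1-w)\,\pr_{G,w}[\Phi(v) = 1]}{1 - (1-w)\,\pr_{G,w}[\Phi(v) = q]}.
\]
By Lemma~\ref{lem:prob basic}, each marginal is bounded by $1/(q-\Delta) \leq 1/(\alpha\Delta + 1) < 1$, uniformly in $(G,v)$ and $w \in [0,1]$. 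Both numerator and denominator therefore lie in $[\,1 - 1/(\alpha\Delta+1),\, 1\,]$, so the ratio is trapped in a fixed compact subinterval of $(0,\infty)$ depending only on $\alpha$ and $\Delta$; taking logarithms yields the desired uniform bound. The only step requiring any real care is the partition-function identification above, which rests on the bookkeeping of the $w^{c_j}$ factors; the rest is entirely routine.
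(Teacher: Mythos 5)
Your proof is correct, and it takes a genuinely different route from the paper's. The paper bounds $P_{\bf c}(w,{\bf R})$ and $Q_{\bf c}(w,{\bf R})$ separately: it first sandwiches each $e^{R_j}$ between $\tfrac{q-\Delta}{qe^{1/\alpha}}$ and $\tfrac{qe^{1/\alpha}}{q-\Delta}$ using both Lemma~\ref{lem:prob basic} and Lemma~\ref{lem:prob basic lower}, then uses the fact that at least $q-\Delta$ entries of ${\bf c}$ vanish to obtain two-sided bounds on $P$ and $Q$ individually, and finally divides, getting $\tfrac{(q-\Delta)^3}{q^3e^{2/\alpha}}\le P/Q\le \tfrac{q^3e^{2/\alpha}}{(q-\Delta)^3}$. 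You instead identify the ratio directly as $P/Q=Z_{G^{+1}}(w)/Z_{G^{+q}}(w)=\bigl(1-(1-w)\pr_{G,w}[\Phi(v)=1]\bigr)/\bigl(1-(1-w)\pr_{G,w}[\Phi(v)=q]\bigr)$ --- the same partition-function identification the paper makes inside the proof of Lemma~\ref{lem:gradient for real w} --- so that only the upper bound on marginals is needed, the lower-bound Lemma~\ref{lem:prob basic lower} is bypassed entirely, and you get the sharper estimate that $P/Q$ lies in $[1-\tfrac{1}{\alpha\Delta+1},(1-\tfrac{1}{\alpha\Delta+1})^{-1}]$. Your bookkeeping ($Z^j_{G,v}=w^{c_j}Z^j_{\overline G,v}$, the formula for $Z_{G^{+k}}$, and positivity of everything for real $w\in[0,1]$ via greedy coloring) all checks out. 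The one thing worth flagging is downstream use rather than correctness: the paper's proof yields, as a by-product, lower bounds on $|P|$ and $|Q|$ \emph{individually}, and it is really these (not merely boundedness of $F$) that justify the step in Proposition~\ref{prop:gradient and w} where $F$ is asserted to be well-defined on a closed $r$-neighborhood of $\mathcal{R}_{\bf c}$. Your argument proves the lemma exactly as stated, but if you wanted it to serve as a drop-in replacement you would still need to record that $P$ and $Q$ are each bounded away from zero, and a uniform such bound does seem to require a lower bound on the ratios $e^{R_j}$, i.e.\ Lemma~\ref{lem:prob basic lower} after all.
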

\begin{proof}
For any $(w,{\bf R})\in \mathcal{R}_{\bf c}$, $j\in [q-1]$ we have
\begin{equation*}
\exp(R_j)=\frac{Z^j_{\overline{G},v}(w)}{Z^q_{\overline{G},v}(w)}=\frac{\pr_{\overline{G},w}[\Phi(v)=j]}{\pr_{\overline{G},w}[\Phi(v)=q]},
\end{equation*}
and therefore by Lemma~\ref{lem:prob basic} and Lemma~\ref{lem:prob basic lower},

\begin{equation*}
    \frac{q-\Delta}{qe^{1/\alpha}}\le \exp(R_j)\le \frac{qe^{1/\alpha}}{q-\Delta}.
\end{equation*}

Then, since at least $q-\Delta$ entries of ${\bf c}$ are equal to $0$ and $w\in [0,1]$,
\begin{align*}
\frac{(q-\Delta)^2}{qe^{1/\alpha}}\leq & P_{\bf c}(w,{\bf R})\le \frac{q^2 e^{1/\alpha}}{q-\Delta},
\\
\frac{(q-\Delta)^2}{qe^{1/\alpha}}\leq  & Q_{\bf c}(w,{\bf R})\le  \frac{q^2 e^{1/\alpha}}{q-\Delta},
\end{align*}
where we recall that $ P_{\bf c}$ and  $Q_{\bf c}$ are defined in~\eqref{eq:def P}  and~\eqref{eq:def Q}, respectively.
It thus follows that 
\[
\frac{(q-\Delta)^3}{q^3e^{2/\alpha}}\le \left|\frac{P_{\bf c}(w,{\bf R})}{Q_{\bf c}(w,{\bf R})}\right|\leq \frac{q^3e^{2/\alpha}}{(q-\Delta)^3}.
\]
Since $F_c=\log(P_{\bf c}/Q_{\bf c})$, this implies that $F(\mathcal{R}_{\bf c})$ is a bounded set, as desired.
\end{proof}

\begin{proposition}
\label{prop:gradient and w}
Let $\alpha>0$ and let $q,\Delta$ be positive integers such that $q\geq (1+\alpha) \Delta+1$.
Then for any $\varepsilon\in (0,1)$ there exists a $\delta >0$ such that the following holds.
Let $(G,v)\in\mathcal{G}_{\Delta,q}^\bullet$, and let $w\in[0,1]$.
Let ${\bf R}\in \mathbb{R}^{q-1}$ be the vector defined by $R_{j}=R_{\overline{G},v;j,q}(w)$.
Then
\begin{itemize}
    \item[(i)]
    if ${\bf x} \in \mathbb{C}^{q-1}$ and $\|{\bf R}-{\bf x}\|_\infty\le \delta$, then
\[
\|{\bf P}_{G,v}(w)-\nabla F_{w,{\bf c}}({\bf x})\|_1\le \varepsilon;
\]
\item[(ii)] if ${\bf x}\in\mathbb{C}^{q-1}$ and $\|{\bf R}-{\bf x}\|_\infty\le \delta$ and $|\tilde w-w|\leq \delta$, then 
\[
    |F_{w,{\bf c}_{G,v}}({\bf x})-F_{\tilde w,{\bf c}_{G,v}}({\bf x})|\le \varepsilon.
\]
\end{itemize}
\end{proposition}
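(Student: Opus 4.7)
The plan is a uniform-continuity/compactness argument, building on Lemma~\ref{lem: ratio bounds} (to bound the log-ratios) and Lemma~\ref{lem:gradient for real w} (to identify the gradient at a real log-ratio vector with ${\bf P}_{G,v}(w)$).

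First I would observe that the set of valid color vectors is finite: since $\sum_j c_j \leq \Delta$, only finitely many ${\bf c} \in \mathbb{N}_{\geq 0}^q$ can arise as ${\bf c}_{G,v}$. It therefore suffices to prove the statement for each fixed valid ${\bf c}$ and then take the minimum of the resulting $\delta$'s at the end.

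Fix such a ${\bf c}$. The proof of Lemma~\ref{lem: ratio bounds} shows that for every $(G,v) \in \mathcal{G}^\bullet_{\Delta,q}$ with ${\bf c}_{G,v} = {\bf c}$ and every $w \in [0,1]$, each entry of the associated real log-ratio vector ${\bf R}$ lies in a fixed compact interval $[\log m, \log M]$ with $m, M > 0$ depending only on $q, \Delta, \alpha$, and moreover $P_{\bf c}(w,{\bf R})$ and $Q_{\bf c}(w,{\bf R})$ are both bounded below by a positive constant. Pick a small $\delta_0 > 0$ and consider the compact set
\[
K = \bigl\{(\tilde w, {\bf x}) \in \mathbb{C} \times \mathbb{C}^{q-1} : \exists\, w \in [0,1], {\bf R} \in [\log m, \log M]^{q-1} \text{ with } |\tilde w - w| \leq \delta_0,\ \|{\bf x} - {\bf R}\|_\infty \leq \delta_0 \bigr\}.
\]
Since $P_{\bf c}$ and $Q_{\bf c}$ are polynomials in $\tilde w$ and in $e^{x_1}, \dots, e^{x_{q-1}}$, they are continuous, and by choosing $\delta_0$ small enough we can keep both bounded below by a positive constant on $K$, with $P/Q$ close to a positive real number there. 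Then the principal branch of the logarithm yields a continuous $F_{\tilde w, {\bf c}}({\bf x}) = \log(P/Q)$ on $K$, and the gradient $\nabla F_{\tilde w, {\bf c}}({\bf x})$ is continuous as well. By compactness of $K$, both $F$ and $\nabla F$ are uniformly continuous on $K$.

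Uniform continuity yields the desired bounds: for part (i), apply it to the two points $(w, {\bf x})$ and $(w, {\bf R})$ of $K$ and combine with Lemma~\ref{lem:gradient for real w}, which says $\nabla F_{w, {\bf c}}({\bf R}) = {\bf P}_{G,v}(w)$; for part (ii), apply it to the two points $(\tilde w, {\bf x})$ and $(w, {\bf x})$.

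The main obstacle is making sure that the $\delta$ is truly uniform over $(G,v) \in \mathcal{G}^\bullet_{\Delta,q}$: a priori this is an infinite family. What rescues uniformity is the combination of (a) the finiteness of the set of valid color vectors ${\bf c}$, and (b) the uniform-in-$(G,v)$ bounds on the log-ratio vectors from Lemma~\ref{lem: ratio bounds}. Beyond this, the argument is standard compactness. Note that this existence-style argument does not directly give explicit control on how $\delta$ depends on $q$ and $\Delta$; the appendix referenced in the text is designed to supply that explicit quantitative dependence.
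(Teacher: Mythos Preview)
Your proposal is correct and follows essentially the same route as the paper: reduce to finitely many valid color vectors, use the uniform bounds from Lemma~\ref{lem: ratio bounds} to trap the real log-ratio vectors in a compact box, thicken to a compact neighborhood on which $P_{\bf c}$ and $Q_{\bf c}$ stay bounded away from zero, and invoke uniform continuity of $F$ and $\nabla F$ together with Lemma~\ref{lem:gradient for real w}. The only cosmetic difference is that the paper works with the closure of an $r$-neighborhood of the set $\mathcal{R}_{\bf c}$ of actually realized pairs $(w,{\bf R})$ rather than your larger box $[0,1]\times[\log m,\log M]^{q-1}$, but either compact set does the job.
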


\begin{remark}\label{rem: refer to appendix}
Below we give a concise proof using a compactness argument, but which does not display how $\delta$ depends on $\varepsilon$.
We refer the reader to Appendix ~\ref{appendix:additional computations} for an explicit proof of this proposition, where we also show $\delta$ can be taken of the form $\min\left\{ C(\alpha)\varepsilon, C(\alpha) /\Delta \right\}$ for some constant $C(\alpha)$ that only depends on $\alpha$.
\end{remark}

\begin{proof}
Let us fix a valid color vector ${\bf c}$.
Since by the previous lemma we know that $F$ is bounded on $\mathcal{R}_c$, we know that there exists 
an $r>0$ such that $F$ is well-defined and bounded on the compact set
\[ 
    \mathcal{D}_{\bm{c}}:=\overline{B(\mathcal{R}_{\bm{c}},r)}\subseteq \mathbb{C}\times \mathbb{C}^{q-1},
\]
where we view $\mathbb{C}\times \mathbb{C}^{q-1}$ as the direct sum of the normed space $\mathbb{C}$ with the absolute value and $\mathbb{C}^{q-1}$ with the $\infty$-norm, and where $\overline{B(\mathcal{R}_{\bm{c}},r)}$ denotes the closure of the set of points that have distance at most $r$ to $\mathcal{R}_{\bm{c}}$.

Now $\nabla F_{w,\bm{c}}:\mathcal{D}_{\bm{c}}\to \mathbb{C}^{q-1}$ (defined by $(w,{\bf x})\mapsto \nabla F_{w,\bf c}({\bf x})$) is a continuous function on a compact set. 
Therefore, for any $\varepsilon>0$ there exists an $r\ge \delta_{1,\bf c}>0$, such that 
if $(w,{\bf x}),(w,{\bf y})\in\mathcal{D}_{\bf c}$ and $\|{\bf x}-{\bf y}\|_1\le \delta_{1,\bf c}$, then
\[
\|\nabla F_{w,\bf c}({\bf x})-\nabla F_{w,\bf c}({\bf y})\|\le \varepsilon.
\]
Similarly, there exists $\delta_{2,{\bf c}}$ such that for any $(w,{\bf x}), (w',\bf y)\in \mathcal{D}_{\bf c}$ we have
\[
|F_{w,{\bf c}}({\bf x})-F_{w',{\bf c}}({\bf y})|\le \varepsilon.
\]
There are finitely many choices of $\bf c$ for a given $q$ and $\Delta$, therefore we can choose $\delta':=\min\{\delta_{1,\bf c},\delta_{2,{\bf c}}\mid {\bf c}  \text{ valid color vector}\}>0$  and set $\delta=\delta'/2$.

For this choice of $\delta$ we have for the given $(w,{\bf R})\in \mathcal{R}_{{\bf c}_{G,v}}$ and any ${\bf x}$ such that $\|{\bf x}-{\bf R}\|_\infty\leq\delta$,
\[
 \|{\bf P}_{G,v}-\nabla F_{w,{\bf c}_{G,v}}({\bf x})\|_1=\|\nabla F_{w,{\bf c}_{G,v}}({\bf R})-\nabla F_{w,{\bf c}_{G,v}}({\bf x})\|_1\leq \varepsilon,
\]
and similarly, for any $(\tilde w,{\bf x})$ such that $|\tilde w-w|\leq \delta$ and $\|{\bf R}-{\bf x}\|_\infty\leq \delta$ (so that $\|(w,{\bf R})-(\tilde w,{\bf x})\|\leq \delta'$) we have
\[
|F_{w,{\bf c}_{G,v}}({\bf R})-F_{\tilde w,{\bf c}_{G,v}}({\bf x})|\le \varepsilon.
\]
This finishes the proof.
\end{proof}

The next lemma allows us in combination with Proposition~\ref{prop:gradient and w} to bound~\eqref{eq:difference in terms of gradient}.
\begin{lemma}\label{lem:replace by prob}
Let $G$ be a partially $q$-colored graph and let $v$ be a free vertex of $G$.
Let $w\in [0,1]$ and let ${\bf P}_{G,v}$ be defined as in~\eqref{eq:define P}. 
Let ${\bf x}\in \mathbb{C}^{q-1}$ and let $\widehat{{\bf x}}$ be defined by $\widehat{x}_1=-x_1$ and by $\widehat{x}_j=x_j-x_1$ for $j=2,\dots,q-1$.
Then
\[
|\langle{\bf P}_{G,v},{\bf x}\rangle|\le \left\{\begin{array}{ll}
(1-w)\pr_{G^{+1},w}[\Phi(v)=q] \cdot \|{\bf x }\|_{\infty} &\textrm{if $\pr_{G,w}[\Phi(v)=1]\le \pr_{G,w}[\Phi(v)=q]$}\\
(1-w)\pr_{G^{+q},w}[\Phi(v)=1] \cdot \|\widehat{{\bf x}}\|_{\infty} &\textrm{if $\pr_{G,w}[\Phi(v)=1]>\pr_{G,w}[\Phi(v)=q]$}
\end{array}\right..
\]
\end{lemma}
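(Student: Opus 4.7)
The plan is to compute the entries $P_{G,v;j}$ explicitly, determine their signs in each of the two cases, and then exploit the identity $\sum_{j=1}^q P_{G,v;j}=0$ (which follows because both $\pr_{G^{+1}}$ and $\pr_{G^{+q}}$ are probability distributions on $[q]$) to rewrite $\langle {\bf P}_{G,v},{\bf x}\rangle$ as a sum whose coefficients all have the same sign. Throughout, I set $Z_j:=Z_{G,v}^j(w)$, $Y:=Z_{G^{+1}}(w)=Z_G-(1-w)Z_1$ and $Y':=Z_{G^{+q}}(w)=Z_G-(1-w)Z_q$, so that $\pr_{G^{+1}}[\Phi(v)=q]=Z_q/Y$ and $\pr_{G^{+q}}[\Phi(v)=1]=Z_1/Y'$. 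I also extend ${\bf x}$ to $\mathbb{C}^q$ by $x_q:=0$, and extend ${\bf P}_{G,v}$ to index $q$ by the same defining formula. A direct calculation gives $P_{G,v;j}=Z_j(Y'-Y)/(YY')$ for $j\in\{2,\ldots,q-1\}$, while $P_{G,v;1}=wZ_1/Y-Z_1/Y'$ and $P_{G,v;q}=Z_q/Y-wZ_q/Y'$. Note that the hypothesis $\pr_{G,w}[\Phi(v)=1]\le\pr_{G,w}[\Phi(v)=q]$ of Case 1 is equivalent to $Z_1\le Z_q$, which in turn is equivalent to $Y\ge Y'$.

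Next, I would do the sign analysis. In Case 1, $Y\ge Y'$ gives $P_{G,v;j}\le 0$ for $j\in\{2,\ldots,q-1\}$, $P_{G,v;1}\le 0$ because $wZ_1/Y\le Z_1/Y\le Z_1/Y'$, and $P_{G,v;q}\ge 0$ because the inequality $wZ_q/Y'\le Z_q/Y$ is equivalent to $wY\le Y'$, which unpacks to $Z_q\le \sum_{j}Z_j + wZ_1$. In Case 2, symmetric reasoning (with $Y\le Y'$) gives $P_{G,v;1}\le 0$ and $P_{G,v;j}\ge 0$ for all $j\neq 1$.

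The sign information together with $\sum_{j=1}^q P_{G,v;j}=0$ now controls the inner product. In Case 1, using $x_q=0$,
\[
\langle {\bf P}_{G,v},{\bf x}\rangle=\sum_{j=1}^q P_{G,v;j}\,x_j=\sum_{j\neq q}P_{G,v;j}\,x_j,
\]
and since all coefficients on the right have the same sign with $\sum_{j\neq q}|P_{G,v;j}|=P_{G,v;q}$, the triangle inequality yields $|\langle {\bf P}_{G,v},{\bf x}\rangle|\le P_{G,v;q}\,\|{\bf x}\|_\infty$. In Case 2, I would instead apply $\sum_j P_{G,v;j}=0$ in the shifted form $\sum_j P_{G,v;j}(x_j-x_1)$; the $j=1$ term vanishes and the $j=q$ term contributes $P_{G,v;q}(-x_1)=P_{G,v;q}\widehat x_1$, giving
\[
\langle {\bf P}_{G,v},{\bf x}\rangle=P_{G,v;q}\,\widehat x_1+\sum_{j=2}^{q-1}P_{G,v;j}\,\widehat x_j,
\]
where all coefficients are now nonnegative and sum to $|P_{G,v;1}|$, so $|\langle {\bf P}_{G,v},{\bf x}\rangle|\le |P_{G,v;1}|\,\|\widehat{\bf x}\|_\infty$.

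Finally, one needs the factor $(1-w)$. The bound $P_{G,v;q}\le (1-w)Z_q/Y$ in Case 1 rearranges to $wZ_q/Y\le wZ_q/Y'$, i.e.\ $Y'\le Y$, which is precisely the Case 1 hypothesis; and $|P_{G,v;1}|=Z_1/Y'-wZ_1/Y\le (1-w)Z_1/Y'$ in Case 2 rearranges analogously to $Y\le Y'$. The main obstacle is keeping the sign analysis and the right rearrangement (which is different in the two cases) bookkeepingly straight; everything reduces to one-line algebraic manipulations once the signs are in hand.
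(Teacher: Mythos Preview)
Your proof is correct and follows essentially the same approach as the paper: compute the signs of the entries of ${\bf P}_{G,v}$, use the identity $\sum_{j=1}^q P_{G,v;j}=0$ (the paper writes this as $\langle {\bf P}_{G,v},-{\bf 1}\rangle$), and in Case~2 pass to the shifted vector $\widehat{\bf x}$. The only cosmetic difference is that the paper first rotates ${\bf x}$ by a phase to reduce to real vectors before bounding the inner product, whereas you skip this and apply the triangle inequality directly to the complex sum; your route is slightly more direct and loses nothing.
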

\begin{proof}
First of all, note that possibly after multiplication of the coordinates of ${\bf x}$ by $e^{i\vartheta}$ for some $\vartheta\in \mathbb{R}$, we may assume that
\[
|\langle{\bf P}_{G,v},{\bf x}\rangle|=\langle{\bf P}_{G,v},{\bf x}\rangle=\langle {\bf P}_{G,v},\Re({\bf x})\rangle,
\]
since ${\bf P}_{G,v}$ is real valued.
We may therefore restrict to real vectors ${\bf x}$.

Next observe that the first coordinate of ${\bf P}_{G,v}$ is non-positive, since
\begin{align*}
    P_{G,v;1}(w)&=\frac{wZ_{G,v}^{1}(w)}{Z_G(w)-(1-w)Z_{G,v}^1(w)}-\frac{Z_{G,v}^{1}(w)}{Z_G(w)-(1-w)Z_{G,v}^q(w)}\\
    &=\frac{(1-w)Z_{G,v}^1(w)(-wZ_{G,v}^q(w)-Z_G(w)+Z_{G,v}^1(w))}{(Z_G(w)-(1-w)Z_{G,v}^1(w))(Z_G(w)-(1-w)Z_{G,v}^q(w))}\\
    &\le \frac{-Z_{G,v}^1(w)(1-w)(w+1)Z_{G,v}^q(w)}{(Z_G(w)-(1-w)Z_{G,v}^1(w))(Z_G(w)-(1-w)Z_{G,v}^q(w))}\\
    &\le 0.
\end{align*}
Note that with similar reasoning it follows that
\begin{equation}\label{eq:positive}
\pr_{G^{+1},w}[\Phi(v)=q]-\pr_{G^{+q},w}[\Phi(v)=q]\geq 0.
\end{equation}
Next we distinguish the two cases. 

If $\pr_{G,w}[\Phi(v)=1]\le \pr_{G,w}[\Phi(v)=q]$, then
\[
    Z_{G,v}^1(w)\le Z_{G,v}^q(w),
\]
and for $j>1$ we have
\begin{align*}
    P_{G,v;j}&=\frac{Z_{G,v}^{j}(w)}{Z_G(w)-(1-w)Z_{G,v}^1(w)}-\frac{Z_{G,v}^{j}(w)}{Z_G(w)-(1-w)Z_{G,v}^q(w)}\le  0.
\end{align*}
So in this case each entry of ${\bf P}_{G,v}$ is non-positive and hence 
\[
|\langle{\bf P}_{G,w},{\bf x}\rangle|\leq \|{\bf x}\|_\infty\langle{\bf P}_{G,w},{-\bf 1}\rangle=\|{\bf x}\|_\infty(\pr_{G^{+1}}[v=q]-\pr_{G^{+q}}[v=q]).
\]
We next note that since $Z^1_{G,v}(w)\le Z^q_{G,v}(w)$ we have

\[
\sum_{j=2}^{q-1}Z^j_{G,v}(w)+wZ^q_{G,v}(w)+Z^1_{G,v}(w)\leq w Z^1_{G,v}(w)+\sum_{j=2}^{q-1}Z^j_{G,v}(w)+Z^q_{G,v}(w),
\]
and therefore, 
\begin{equation}\label{eq:(1-w) mult bound}
\pr_{G^{+1},w}[\Phi(v)=q]-\pr_{G^{+q},w}[\Phi(v)=q]\le (1-w)\pr_{G^{+1},w}[\Phi(v)=q],   
\end{equation}
proving the first case.

If $\pr_{G,w}[\Phi(v)=1]>\pr_{G,w}[\Phi(v)=q]$, the first entry of ${\bf P}_{G,v}$ is non-positive while all other entries are non-negative. 
Hence we may assume that ${\bf x}$ satisfies $x_1\leq 0$ and $x_j\geq 0$ for $j\geq 2$.
This gives us by definition of $\widehat{{\bf x}}$, 
\begin{align*}
    \langle {\bf P}_{G,v},{\bf x}\rangle&= P_{G,v;1}x_1+\sum_{j=2}^{q-1}P_{G,v;j}(x_1+\widehat{x}_j)\\
    &= x_1\sum_{j=1}^{q-1} P_{G,v;j}+\sum_{j=2}^{q-1}P_{G,v;j}\widehat{x}_j\\
    &= -x_1(\pr_{G^{+1},w}[\Phi(v)=q]-\pr_{G^{+q},w}[\Phi(v)=q])+\sum_{j=2}^{q-1}P_{G,v;j}\widehat{x}_j \\
    &=\sum_{j=1}^{q-1} \widehat{P}_{G,v;j}\widehat{x}_j=\langle {\bf \widehat{P}}_{G,v},\widehat{{\bf x}}\rangle,
\end{align*}
where ${\bf \widehat{P}}_{G,v}$ is defined by $\widehat{P}_{G,v;j}=P_{G,v;j}$ for $j\geq 2$ and $\widehat{P}_{G,v;1}=\pr_{G^{+1}}[v=q]-\pr_{G^{+q}}[v=q])$.
We note that by construction and~\eqref{eq:positive}, $\widehat{P}_{G,v;j}\geq 0$ for all $j$. 
Therefore 
\begin{align*}
\langle {\bf \widehat{P}}_{G,v},{\bf \widehat x}\rangle &\leq \|\widehat{{\bf x}}\|_\infty \|{\bf \widehat{P}}_{G,v}\|_1
\\
&=\|\widehat{{\bf x}}\|_\infty |P_{G,v;1}| \leq \|\widehat{{\bf x}}\|_\infty (1-w) \pr_{G^{+q}}[v=1],
\end{align*}
where the last inequality follows in the same way as~\eqref{eq:(1-w) mult bound}.
This finishes the proof.
\end{proof}

\section{Proof of main theorem for \texorpdfstring{$\eta=0$}{eta=0}}\label{sec:proof eta=0}
In this section we prove Theorem~\ref{thm:main expanded} for $\eta=0$.
For this we will use the following lemma, whose proof we defer to the end of this section.
\begin{lemma}\label{lem:lowerbound_wtilde_sum}
Let $\alpha>1$ and let $q,\Delta$ be positive integers such that $q \geq (1+\alpha) \Delta+1$. Then there exist constants $C(\alpha)>0$ and $C_1(\alpha)>0$ such that for any $\varepsilon_1\in (0,C_1(\alpha)/\Delta)$ and $\varepsilon_2\in (0,\pi/8)$  the following holds: for any $(G,v)\in\mathcal{G}_{\Delta,q}^\bullet$ and any $w\in[0,1]$ we have that if $\tilde w\in \mathbb{C}$ and ${\bf x}\in \mathbb{C}^{q-1}$ satisfy 
\begin{align*}
    |\tilde{w}-w| < \varepsilon_1 \quad \text{ and } \quad |x_j - R_{\overline{G},v;j,\ell}(w)| < \varepsilon_2, \text{ for all }j,\ell\in [q] 
\end{align*}
then for any $\tau\in \{0,1\}$ and $\ell\in [q]$,
\begin{align}\label{eq:lower bound on ratio sum}
     \left| \sum_{j\neq \ell} \tilde{w}^{c_j}e^{x_j} + \tilde{w}^{c_\ell+\tau} \right| \geq  C(\alpha)\Delta,
\end{align}
where ${\bf c}={\bf c}_{G,v}$.
\end{lemma}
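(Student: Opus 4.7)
The plan is to first compute the value of the sum at the ``exact'' point $(\tilde w,{\bf x})=(w, (R_{\overline{G},v;j,\ell}(w))_{j\neq\ell})$ and identify it with a partition function ratio that Lemma~\ref{lem:prob basic} bounds below by $\alpha\Delta$; then to show that the perturbation away from this exact point is a multiplicative correction by a factor of $e^{\varepsilon_2}-1<1$ plus an $O(1)$ error controlled by $\varepsilon_1<C_1(\alpha)/\Delta$.

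For the first step, using $e^{R_{\overline{G},v;j,\ell}(w)}=Z^j_{\overline{G},v}(w)/Z^\ell_{\overline{G},v}(w)$ and $w^{c_j}Z^j_{\overline{G},v}(w)=Z^j_{G,v}(w)$, I will show that $S_{\mathrm{exact}}:=\sum_{j\neq\ell}w^{c_j}e^{R_j(w)}+w^{c_\ell+\tau}$ equals $Z_H(w)/Z^\ell_{\overline{G},v}(w)=w^{c^H_\ell}/\pr_{H,w}[\Phi(v)=\ell]$, where $H$ is $G$ if $\tau=0$ and $G^{+\ell}$ (obtained by attaching an additional pinned $\ell$-colored leaf to $v$) if $\tau=1$. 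Lemma~\ref{lem:prob basic} applied to $H$ (of maximum degree at most $\Delta+1$) then gives $S_{\mathrm{exact}}\geq q-d_H+d_Hw\geq q-\Delta-1\geq\alpha\Delta$, using $q\geq(1+\alpha)\Delta+1$.

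For the second step, I would decompose each term as
\[
\tilde{w}^{c_j}e^{x_j}-w^{c_j}e^{R_j(w)}=(\tilde{w}^{c_j}-w^{c_j})e^{x_j}+w^{c_j}(e^{x_j}-e^{R_j(w)})
\]
and bound the two resulting sums separately. The second sum, using $|e^{x_j}-e^{R_j(w)}|\leq|e^{R_j(w)}|(e^{\varepsilon_2}-1)$, is bounded by $(e^{\varepsilon_2}-1)\sum_{j\neq\ell}w^{c_j}|e^{R_j(w)}|$, which is a \emph{multiplicative} perturbation of $S_{\mathrm{exact}}$ by a factor $e^{\varepsilon_2}-1<e^{\pi/8}-1<1$ (crucially using $\pi/8<\log 2$). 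The first sum only involves indices with $c_j\geq1$; using $|\tilde{w}^{c_j}-w^{c_j}|\leq c_je\varepsilon_1$ for $\varepsilon_1\leq 1/\Delta$, together with $\sum_j c_j\leq\Delta$ and the bound $|e^{R_j(w)}|\leq 2e$ from Lemma~\ref{lem: ratio bounds} (valid for $\alpha>1$), I obtain an absolute bound of order $\Delta\varepsilon_1$, which is $O(1)$ under the assumption $\varepsilon_1<C_1(\alpha)/\Delta$; a similar $O(1)$ bound handles the extra term $|\tilde{w}^{c_\ell+\tau}-w^{c_\ell+\tau}|$. Combining yields $|S_{\tilde w,{\bf x}}|\geq(2-e^{\pi/8})S_{\mathrm{exact}}-O(1)\geq(2-e^{\pi/8})\alpha\Delta-O(1)\geq C(\alpha)\Delta$ for an appropriate $C(\alpha)>0$, after choosing $C_1(\alpha)$ small enough (e.g.\ $C(\alpha)=(2-e^{\pi/8})\alpha/4$).

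The main obstacle to overcome is recognizing that the second perturbation is \emph{multiplicative}, not absolute. A naive term-by-term bound would give a perturbation of order $q\cdot\max_j|e^{R_j(w)}|\cdot(e^{\varepsilon_2}-1)$, which is comparable to or larger than $\alpha\Delta$ when $\alpha$ is close to $1$ and $\varepsilon_2$ is close to $\pi/8$. The key observation is that the weights $w^{c_j}|e^{R_j(w)}|=w^{c_j}e^{R_j(w)}$ appearing in the second perturbation are precisely the terms whose sum is $S_{\mathrm{exact}}$ (up to an $O(1)$ correction coming from the $\tau$ term), turning a potentially catastrophic absolute bound into a controlled relative one.
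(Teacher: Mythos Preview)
Your proof is correct and takes a genuinely different route from the paper's.

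The paper does not pass through the partition-function identification $S_{\mathrm{exact}}=w^{c^H_\ell}/\pr_{H,w}[\Phi(v)=\ell]$ at all. Instead it invokes Barvinok's cone lemma (Lemma~\ref{Lem:Barvinok}) together with a case split at the threshold $w^\star=\varepsilon_1/\sin(\pi/(8\Delta))$: for $w\leq w^\star$ it isolates the indices with $c_j=0$ (at least $q-\Delta$ of them), bounds that part below via Barvinok, and subtracts off the remaining terms, each of size $O(|\tilde w|)=O(w^\star+\varepsilon_1)$; for $w>w^\star$ it shows every term $\tilde w^{c_j}e^{x_j}$ lies in a cone of angle at most $\pi/2$ and applies Barvinok directly. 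Your argument, by contrast, avoids both the cone lemma and the case split: you anchor at the real point, read off a lower bound of $\alpha\Delta$ from Lemma~\ref{lem:prob basic}, and then handle the perturbation by the clean decomposition into a multiplicative $(e^{\varepsilon_2}-1)$-factor (for the $x_j$-variation) plus an additive $O(\varepsilon_1\Delta)=O(1)$ error (for the $\tilde w$-variation on the at most $\Delta$ indices with $c_j\geq 1$). Your key observation---that the $x_j$-error is a fraction of $S_{\mathrm{exact}}$ rather than a crude $O(q)$---is exactly what makes this work and is nicely highlighted.

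Two small points worth tightening when you write it up: (a) for $\tau=1$ the graph $H=G^{+\ell}$ may have $\deg_H(v)=\Delta+1$, so to invoke Lemma~\ref{lem:prob basic} you should note that $q\geq(1+\alpha)\Delta+1>\Delta+2$ holds under the hypothesis $\alpha>1$; (b) the bound $|e^{x_j}|\leq 2e$ should strictly be $|e^{x_j}|\leq 2e\cdot e^{\varepsilon_2}$, which is still an $\alpha$-dependent constant and changes nothing. With the generic bound $|e^{R_j}|\leq M=(1+\alpha)e^{1/\alpha}/\alpha$ in place of $2e$, your argument in fact extends verbatim to all $\alpha>0$, matching the paper's applications with $\alpha=2/3$.
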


\begin{proof}[Proof of Theorem~\ref{thm:main expanded} for $\bm{\eta=0}$]
Throughout the proof $q,\Delta$ and $w\in [0,1]$ are fixed and whenever we refer to a graph we in fact mean a partially $q$-colored graph.

We will prove the theorem by induction on the number of free vertices of the graph, the base case being a connected rooted graph $(G,v)\in \mathcal{G}_q^{\bullet}$ where $v$ is the only free vertex of $G$. 
Since the pinned vertices of $G$ form an independent set, $(\overline{G},v)$ is just an isolated vertex and hence for any pair of colors $i,j$ we have
$R_{\overline{G},v;i,j}(\tilde w)=\log(1)=0$ for any $\tilde w \in \mathbb{C}$, proving the first part of the base case.
Now for the second part, we apply Lemma~\ref{lem:lowerbound_wtilde_sum} with $\alpha=2/3$, ${\bf R}={\bf x}={\bf 0}$ and $\tau=0$ to see that provided $\varepsilon_1\leq C_1(2/3)$ and $\varepsilon_2\leq \pi/8$,
\begin{align*}
 |Z_{G}(\tilde w)|=\left|\sum_{j=1}^q \tilde{w}^{c_j}\right|\geq C(2/3)\Delta>0,
\end{align*}
and hence in particular, $Z_{G}(\tilde w)\neq 0$.
This finishes the verification of the base case.

Next consider a partially $q$-colored graph $G$ of maximum degree at most $\Delta$ with more than one free vertex, all of whose pinned vertices are all leaves and form an independent set.
Let $v$ be a free vertex of free degree $d\leq \Delta-1$. 
We need to show $|R_{\overline{G},v;i,j}(\tilde w)-R_{\overline{G},v;i,j}(w)|\leq \varepsilon_2$ and $Z_{G}(\tilde w) \neq 0$. 
By symmetry, we may assume that $i=1, j=q$. 
Note that by induction these log-ratios are well-defined since $Z_{G,v}^i(\tilde w)\neq 0$ for all $i$, because after fixing the color of $v$ in $G$ and replacing $v$ by $d$ many leaves, each connected to a unique neighbor of $v$ in $\overline{G}$, each component of the resulting partially $q$-colored graph $H$ has fewer free vertices than $G$, each original free neighbor of $v$ now has free degree at most $\Delta-1$, its pinned vertices are all leaves and form an independent set. Therefore, by induction and the fact that the partition function is multiplicative over the components of $H$, it follows that $Z_{G,v}^i(\tilde w)=Z_H(\tilde w)\neq 0$.

Choose an ordering of the free neighbors $v_1,\ldots ,v_d$ of $v$  and let $G_i$, $i=1,\ldots, d$ be the graphs obtained from the telescoping procedure. 
Let $I\subseteq [d]$ be the set of indices, where $\pr_{G_i,w}[\Phi(v_i)=1]\le \pr_{G_i,w}[\Phi(v_i)=q]$.   
By~\eqref{eq:difference} we have that
\begin{align}
|R_{\overline{G},v;1,q}(\tilde w)-R_{\overline{G},v;1,q}(w)|\le& \sum_{i=1}^d\underbrace{|F_{w,{\bf c}^i}({\bf R}^i(w))-F_{w,{\bf c}^i}({\bf R}^i(\tilde w))|}_{A_i} \nonumber\\
&+\sum_{i=1}^d\underbrace{|F_{w,{\bf c}^i}({\bf R}^i(\tilde w))-F_{\tilde w,{\bf c}^i}({\bf R}^i(\tilde w))|}_{B_i}, \label{eq:bound on diff}
\end{align}
where the vectors ${\bf R}^i(\tilde w)\in \mathbb{C}^{q-1}$ are defined by $R^i_{j}(\tilde w)=R_{\overline{G}_i,v_i;j,q}({\tilde w})$ for $j=1,\ldots,q-1$ (by induction these are well-defined, since the graphs $\overline{G}_i$ have fewer free vertices), and where ${\bf c}^i={\bf c}_{G,v_i}$. 

First let us bound the first summation.  By \eqref{eq:difference in terms of gradient} for a given $i\in [d]$, we know that 
\begin{align*}
A_i=|F_{w,{\bf c}^i}({\bf R}^i(w))&-F_{w,{\bf c}^i}({\bf R}^i(\tilde w))|\\ 
&\le \sup_{t\in[0,1]}|\langle \nabla F_{w,{\bf c}^i}(t{\bf R}^i(w)+(1-t){\bf R}^i(\tilde w)),{\bf R}^i(w)-{\bf R}^i(\tilde w)\rangle|\\
&=\sup_{t\in[0,1]}|\langle \nabla F_{w,{\bf c}^i}({{\bf y}_t}),{\bf x}\rangle|,
\end{align*}
where ${\bf x}={\bf R}^i(w)-{\bf R}^i(\tilde w)$ and ${{\bf y}_t}=t{\bf R}^i(w)+(1-t){\bf R}^i(\tilde w)$. By induction we know that 
\begin{align*}
|R_{\overline{G}_i,v_i;\ell_1,\ell_2}(\tilde w)-R_{\overline{G}_i,v_i;\ell_1,\ell_2}(w)|\leq \varepsilon_2 
\end{align*} 
for any pair of colors $\ell_1,\ell_2$. In particular, taking $\ell_2=q$ we have $\|{\bf x}\|_\infty\le \varepsilon_2 $ and taking $\ell_2=1$ we have $\|\widehat{{\bf x}}\|_\infty\le \varepsilon_2$, where $\widehat{\bf x}$ is defined as $\widehat{x}_1:=-x_1$ and $\widehat{x}_j:=x_j-x_1$ for $j=2,\dots,q-1$.
Moreover, for any $t\in[0,1]$ the vector ${\bf y}_t=t{\bf R}^i(w)+(1-t){\bf R}^i(\tilde w)$ satisfies
\[
\|{\bf y}_t\|_\infty\le \varepsilon_2 \quad\textrm{and}\quad \|{\bf y}_t-{\bf R}^i(w)\|_\infty=(1-t)\|{\bf x}\|_\infty\le \varepsilon_2 .
\]
Then we can bound $A_i$ as follows:
\begin{align}
A_i&\le \sup_{t\in[0,1]}\left|\langle {\bf P}_{{G}_i,v_i},{\bf x} \rangle+\langle \nabla F_{w,{\bf c}^i}({\bf y}_t)-{\bf P}_{{G}_i,v_i},{\bf x} \rangle \right|\nonumber
\\
&\le \sup_{t\in[0,1]}\left(|\langle {\bf P}_{{G}_i,v_i},{\bf x}\rangle|+\|{\bf P}_{{G}_i,v_i}-\nabla F_{w,{\bf c}^i}({{\bf y}_t})\|_1\|{\bf x}\|_\infty\right)\nonumber
\\
&\le |\langle {\bf P}_{{G}_i,v_i},{\bf x}\rangle|  +   \varepsilon_2\sup_{t\in[0,1]}\|{\bf P}_{{G}_i,v_i}-\nabla F_{w,{\bf c}^i}({{\bf y}_t})\|_1. \label{eq:bounding A_i}
\end{align}

By Proposition~\ref{prop:gradient and w}, if $\varepsilon_2>0$ is sufficiently small (i.e. $\varepsilon_2<\delta$ that is given by applying  Proposition~\ref{prop:gradient and w}(i) with $\varepsilon=1/(3\Delta^2)$), then 
$\|{\bf P}_{{G}_i,v_i}-\nabla F_{w,{\bf c}^i}({{\bf y}_t})\|_1\le 1/(3\Delta^2)$.  To bound $|\langle {\bf P}_{{G}_i,v_i},{\bf x}\rangle|$ we distinguish two cases depending on whether $i\in I$ or $i\in [d]\setminus I$. 
If $i\in I$, then by Lemma~\ref{lem:replace by prob} and Lemma~\ref{lem:prob basic}  we can further bound $A_i$ as
\begin{align}
A_i&\le (1-w)\pr_{G_i^{+1}}[v_i=q] \cdot \|{\bf x}\|_{\infty} +\varepsilon_2\tfrac{1}{3\Delta^2} \label{eq:bound Ai}
\\
 &\le  \varepsilon_2\left(\frac{(1-w)}{q-\Delta}+\frac{1}{3\Delta^2}\right).\nonumber
\end{align}
Similarly, if $i\in [d]\setminus I$ we can argue the same way, implying that for any $i=1,\ldots,d$ we have
\begin{align}\label{eq:crucial_for_2Delta}
A_i\le  \varepsilon_2\left(\frac{(1-w)}{q-\Delta}+\frac{1}{3\Delta^2}\right).
\end{align}

Now let us bound $B_i$ in~\eqref{eq:bound on diff}. By Proposition~\ref{prop:gradient and w} if $\varepsilon_1>0$ is sufficiently small (i.e. $\varepsilon_1<\delta$ that is given by applying Proposition~\ref{prop:gradient and w}(ii) with $\varepsilon=\varepsilon_2/(3\Delta^2)$) we have
\begin{align}
B_i\le \varepsilon_2/(3\Delta^2). \label{eq:bound on B_i}
\end{align}
By substituting \eqref{eq:crucial_for_2Delta} and \eqref{eq:bound on B_i} into \eqref{eq:bound on diff} we obtain
\begin{align*}
|R_{\overline{G},v;1,q}(\tilde w)-R_{\overline{G},v;1,q}(w)|&\le d\varepsilon_2\left(\frac{1-w}{q-\Delta}+\frac{2}{3\Delta^2}\right)
\\
&\le \varepsilon_2\left(\frac{\Delta-1}{\Delta}+\frac{2}{3\Delta}\right)<\varepsilon_2.
\end{align*}
This proves the first part of the statement.

To show that $Z_G(\tilde w)\neq 0$ we argue as follows.
Choose any color $\ell$ that is not blocked at the vertex $v$. 
Then $Z_{G,v}^\ell(\tilde w)=Z_{\overline{G},v}^\ell(\tilde w)$, since color $\ell$ is not blocked at $v$.
Therefore, by induction we have that $Z_{G,v}^\ell(\tilde w)\neq 0$, since after replacing $v$ by $\deg_{\overline{G}}(v)$ many leaves each connected to a unique neighbor of $v$ in $\overline{G}$ each component of the resulting partially $q$-colored graph is contained in $\mathcal{G}_{\Delta,q}^{\bullet}$ and has fewer free vertices.

Therefore, to prove that $Z_{G}(\tilde w)\neq 0$, it is sufficient to prove that 
\begin{align*}
    0\neq \frac{Z_{G}(\tilde w)}{Z_{G,v}^\ell(\tilde w)}=\sum_{j=1}^q {\tilde w}^{c_j}e^{R_{\overline G,v;j,\ell}(\tilde w)}.
\end{align*}
This follows directly from an application of Lemma~\ref{lem:lowerbound_wtilde_sum} with $x_j=R_{\overline G,v;j,\ell}(\tilde w)$ and $\alpha=2/3$.
\end{proof}

\begin{remark}\label{rem:choice of eps1}
Note that our proof shows that we need $\varepsilon_2$ to be smaller than $\delta$ with $\delta$ coming from Proposition~\ref{prop:gradient and w} with $\epsilon=1/(3\Delta^2)$.
Additionally, we need $\varepsilon_1$ to be smaller than $\delta$ coming from Proposition~\ref{prop:gradient and w} with $\epsilon=\varepsilon_2/(3\Delta^2)$.

In the appendix we give an alternative proof of Proposition~\ref{prop:gradient and w} that shows that we can take $\delta\geq  C\varepsilon$ for a constant $C>0$ for these choices of $\varepsilon$. 
Therefore we conclude that $\varepsilon_1$ can be chosen to be $C'\Delta^{-4}$ for some constant $C'$.
\end{remark}
\begin{remark}\label{rem:marginal} 
Note that the proof actually shows that we can inductively bound the log-ratio difference by $\varepsilon_2 d/\Delta$, where $d$ denotes the free degree of the vertex $v$, as the sum in~\eqref{eq:ratio difference for induction} is over the free neighbors of $v$.
The crucial part in the proof is to bound $A_i$. 
We do this in~\eqref{eq:bound Ai} using the marginal probability of the root vertex being colored with color $1$, which we bound by $\tfrac{1-w}{q-\Delta}\leq \tfrac{1-w}{\Delta}$ in~\eqref{eq:crucial_for_2Delta} under the assumption that $q\geq 2\Delta$.
It is not hard to see that if for a restricted family of graphs of maximum degree at most $\Delta$ this marginal probability can be bounded by $\tfrac{1}{d+c}$ for some $c\in (0,1)$ (where $d$ denotes the free degree) under weaker assumptions on $q$, then the entire proof still applies under these weaker assumptions, but with the modified induction assumption described above. 
This way we essentially recover the condition from Liu, Sinclair and Srivastava~\cite{LSS2Delta} (as stated in $(\star)$ in the introduction.) In fact, our condition is slightly weaker.
In particular for triangle free graphs it is known that such bounds exist under the assumption that $q\geq 1.7633 \Delta+\beta$ for some absolute constant $\beta>0$, see~\cite{GKMssm,LSS2Delta} for details.

The next section contains refined bounds on the marginal probability of the root vertex under additional assumptions on the structure of the neighborhood of that vertex that we utilize in the proof of the main theorem for $\eta>0$.
\end{remark}
We end this section with a proof of Lemma~\ref{lem:lowerbound_wtilde_sum}. 
In the proof we will use the following lemma of Barvinok.
\begin{lemma}[\protect{Barvinok~\autocite[Lemma 3.6.3]{Barbook} }] \label{Lem:Barvinok}
    Let $u_1, \ldots , u_k \in \R^2$ be non-zero vectors such that the angle between any vectors $u_i$ and $u_j$ is at most $\varphi$ for some $\varphi \in [0, 2\pi/3)$. 
    Then the $u_i$ all lie in a cone of angle at most $\varphi$ and
    \[
    \left| \sum_{j=1}^k u_j \right| \ge \cos(\varphi/2)\sum_{j=1}^k|u_j|.
    \]
\end{lemma}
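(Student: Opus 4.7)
The plan is to prove the statement in two stages: first establish that all $u_j$ lie in a common cone of angular width at most $\varphi$, and then derive the norm bound by projecting onto the cone's bisector.

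For the cone containment, I would argue by induction on $k$. The cases $k\le 2$ are immediate, since two vectors at pairwise angle at most $\varphi$ obviously fit into a cone of width $\le\varphi$. For the inductive step, let $I=[\alpha,\beta]$ be the minimal closed arc on the unit circle containing the directions of $u_1,\dots,u_{k-1}$; by the induction hypothesis its length $w=\beta-\alpha$ satisfies $w\le \varphi$, and the endpoints $\alpha,\beta$ are realised by two of the existing vectors, say $u_\ell$ and $u_r$. The new direction $u_k/|u_k|$ must sit at arc distance at most $\varphi$ from both $u_\ell$ and $u_r$, so it lies in the intersection of the arcs $[\alpha-\varphi,\alpha+\varphi]$ and $[\beta-\varphi,\beta+\varphi]$. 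The inequality $w+2\varphi\le 3\varphi<2\pi$, which uses the assumption $\varphi<2\pi/3$, guarantees that this intersection does not wrap around the circle and equals the single arc $[\beta-\varphi,\alpha+\varphi]$. A short case analysis on whether $u_k$ falls inside $[\alpha,\beta]$, in $[\beta-\varphi,\alpha)$, or in $(\beta,\alpha+\varphi]$ shows that in each case $\{u_1,\dots,u_k\}$ still fits inside an arc of length at most $\varphi$, completing the induction.

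With the cone in hand, I would rotate coordinates so that its bisector is the positive real axis. Every $u_j$ then has argument $\theta_j$ with $|\theta_j|\le \varphi/2<\pi/3<\pi/2$, and since cosine is decreasing on $[0,\pi/2]$ one has $\mathrm{Re}(u_j)=|u_j|\cos\theta_j\ge |u_j|\cos(\varphi/2)$. Summing and applying $|z|\ge \mathrm{Re}(z)$ gives
\[
\Bigl|\sum_{j=1}^k u_j\Bigr|\ge \mathrm{Re}\Bigl(\sum_{j=1}^k u_j\Bigr)\ge \cos(\varphi/2)\sum_{j=1}^k|u_j|,
\]
which is the claimed inequality (note $\cos(\varphi/2)>0$ because $\varphi/2<\pi/3$).

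The one delicate step is the inductive argument for the cone: one must verify that the two feasibility arcs for $u_k$ do not wrap around on the opposite side of the circle, which is exactly where the strict bound $\varphi<2\pi/3$ is used. The equilateral configuration of three unit vectors at pairwise angle $2\pi/3$ shows that this hypothesis is essentially sharp, so the rest of the proof is elementary trigonometry and case-checking once wrap-around is excluded.
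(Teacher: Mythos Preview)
The paper does not prove this lemma; it is quoted from Barvinok's monograph and used as a black box. Your argument is correct and self-contained: the inductive step for cone containment is sound (the bound $w+2\varphi\le 3\varphi<2\pi$ is exactly what prevents the two feasibility arcs for $u_k$ from meeting on the far side of the circle), and the projection onto the bisector immediately yields the inequality since $|\theta_j|\le\varphi/2<\pi/2$.
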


\begin{proof}[Proof of Lemma~\ref{lem:lowerbound_wtilde_sum}]
For the proof we may without loss of generality assume that $\ell=q$.

    Let $ w^\star =  \varepsilon_1 /\sin\left( \frac{\pi}{8 \Delta} \right)$.
    We distinguish the cases $w \leq w^\star$ and $w > w^\star $.

    First consider the case $w \leq  w^\star $.      
By Lemma~\ref{lem:prob basic} and Lemma~\ref{lem:prob basic lower}, and by noting that $q/(q-\Delta)$ is decreasing in $q$ for $q>\Delta$, we have that 
 \begin{equation*}
 \frac{\alpha \Delta +1}{\left((1+\alpha)\Delta +1\right) e^{1/\alpha}} \leq   \frac{q-\Delta}{qe^{1/\alpha}} \leq \exp(R_j)\le  \frac{qe^{1/\alpha}}{q-\Delta} \leq \frac{ \left((1+\alpha)\Delta +1\right) e^{1/\alpha} }{\alpha \Delta +1} ,
 \end{equation*}
and therefore in particular 

 \begin{equation}
     \frac{ \alpha}{(1+\alpha)e^{1/\alpha}} \le \exp(R_j)\le  \frac{(1+ \alpha)e^{1/\alpha}}{\alpha}.
\end{equation}
Thus, by assumption we have $M^{-1}e^{-\varepsilon_2} \leq e^{x_j}\leq Me^{\varepsilon_2}$, where $M=  \frac{(1+ \alpha)e^{1/\alpha}}{\alpha}$, and that all $e^{x_j}$ are contained in a cone of angle at most $2\varepsilon_2\leq \pi/4$, centered around the real axis.
    Moreover, we remark that at least $q- \Delta $ entries of $\bm{c}$ are zero (including possibly $c_q$). Let $S_0 \subset[q-1]$ denote the indices for which this is the case. 
   We obtain
    \begin{align*}
        \left| \sum_{j=1} ^{q-1} \tilde{w}^{c_j}e^{x_j} + \tilde{w}^{c_q+\tau} \right| &\ge  \left| \sum_{j \in S_0} \tilde{w}^{c_j}e^{x_j}  \right|  - \left| \sum_{j \in [q-1]\setminus{S_0}} \tilde{w}^{c_j}e^{x_j}  + \tilde{w}^{c_q+\tau} \right| \\
        &\ge \cos(\varepsilon_2)\sum_{j \in S_0} | e^{x_j}| - \sum_{j \in [q-1]\setminus{S_0}} |\tilde{w}|^{c_j} |e^{x_j}|  - |\tilde{w}^{c_q+\tau}|\\
        &\ge \cos(\varepsilon_2) \frac{(q-\Delta-1)}{M e^{\varepsilon_2}} - \Delta e^{\varepsilon_2} M ( \varepsilon_1 + w^\star)\\
        & \geq \frac{(q-\Delta-1)}{2M} - 2\Delta M  ( \varepsilon_1 +w^\star ).
    \end{align*}
In the second step we used Lemma~\ref{Lem:Barvinok}, and in the last step we used the fact that $\varepsilon_2 \le \pi/8 < \frac{1}{2}$ thus $\cos(\varepsilon_2)e^{-\varepsilon_2}\geq 1/2$.

    For the case $w>w^\star$, we find that
    \begin{align*}
        |\arg( \tilde{w}^{c_j+\tau}e^{x_j})| \leq (c_j+\tau)|\arg{\tilde{w}}| + \varepsilon_2 \leq (c_j+\tau) \arcsin\left( \frac{\varepsilon_1}{w^\star} \right) + \varepsilon_2  \leq \Delta \frac{\pi}{8\Delta} + \frac{\pi}{8} \leq \frac{\pi}{4}.
    \end{align*}

Using Lemma~\ref{Lem:Barvinok} again, we obtain
\begin{align*}
    \left| \sum_{j=1}^{q-1} \tilde{w}^{c_j}e^{x_j} + \tilde{w}^{c_q+\tau} \right| &\ge\cos\left( \frac{\pi}{4}\right) \sum_{j=1} ^{q-1} \left|  \tilde{w}^{c_j}e^{x_j} \right| + \cos\left( \frac{\pi}{4}\right) \left|\tilde{w}^{c_q+\tau} \right|\\
    &\geq 
    \cos\left(\frac{\pi}{4} \right)\sum_{j \in S_0}\left|  \tilde{w}^{c_j}e^{x_j}  \right| \\
    &\ge \frac{(q-\Delta-1)}{M} \frac{\cos\left( \frac{\pi}{4}\right) }{e^{\varepsilon_2}}.
    \\
    &\ge \frac{(q-\Delta-1)}{3M}.
\end{align*}
Combining the two cases, we may conclude that

\[
 \left| \sum_{j=1} ^{q-1} \tilde{w}^{c_j}e^{x_j} + \tilde{w}^{c_q+\tau} \right| \geq  \frac{(q-\Delta-1)}{3M} - 2\Delta M \varepsilon_1 \left(1 + \frac{1}{\sin\left(  \frac{\pi}{8\Delta }\right)}\right).
\]

Let us further bound this by observing that
\[
\frac{q-\Delta-1}{M}  \geq \frac{\alpha^2 }{(1+\alpha)e^{1/\alpha}} \Delta,
\]
and
\begin{align*}
  1 + \frac{1}{\sin\left(  \frac{\pi}{8\Delta} \right)}  & = \frac{\sin\left(  \frac{\pi}{8\Delta} \right)+1}{\sin\left(  \frac{\pi}{8\Delta } \right)} \leq \frac{2}{\sin\left(  \frac{\pi}{8\Delta } \right)}  \leq \frac{2}{  \left(  \frac{\pi}{8\Delta} \right)/2   } = \frac{32 \Delta}{\pi} \leq 11 \Delta.
\end{align*}
 This yields us 
 \begin{align*}
      \left| \sum_{j=1} ^{q-1} \tilde{w}^{c_j}e^{x_j} + \tilde{w}^{c_q+\tau} \right| \geq \left(\underbrace{\frac{\alpha^2 }{3(1+\alpha)e^{1/\alpha}}}_{f(\alpha)}  - \underbrace{22 \frac{e^{1/\alpha} (1+ \alpha)}{\alpha}}_{g(\alpha)} \varepsilon_1 \Delta \right) \Delta .
 \end{align*}
Rewriting the right-hand side as $(f(\alpha) -g(\alpha) \varepsilon_1 \Delta)\Delta$, we see that if $0\le \varepsilon_1\le \frac{f(\alpha)}{2g(\alpha)\Delta}$, then
\[
(f(\alpha) -g(\alpha) \varepsilon_1 \Delta)\Delta\geq \tfrac{1}{2}f(\alpha)\Delta.
\]
This means that the choice $C(\alpha)=\tfrac{1}{2}f(\alpha)$ and $C_1(\alpha)=\frac{f(\alpha)}{2g(\alpha)}$ satisfies the condition of the lemma.
\end{proof}

\section{Refined bounds on marginal probabilities of the root vertex} \label{sec:more marginal bounds}
We collect here some results that improve on the bounds on the marginal probability of the root vertex from Lemma~\ref{lem:prob basic}, under additional assumptions on the local structure of the neighborhood of the root vertex that we will utilize in the proof of the main theorem for $\eta>0$ in the next section.
For the proof of Theorem~\ref{thm:main expanded} one only needs the conclusions of Corollaries~\ref{cor: few blocked} and~\ref{cor:sparse neighborhood}. Apart from that, this section can be read independently of the next section.

\subsection{Local influences on the probabilities}
Our first lemma shows how to improve the bound from Lemma~\ref{lem:prob basic} under the additional assumption that not all neighbors of the root vertex have color $j$ blocked.

\begin{lemma}\label{lem: few blocked}
Let $q$ and $\Delta$ be positive integers such that $q\geq (1+\alpha)\Delta+1$ for some $\alpha>0$ and let $G$ be a partially $q$-colored graph of maximum degree at most $\Delta$ and let $v\in V(G)$ be a free vertex of degree $d$ and with free-degree $f$.
Let $\gamma\in [0,1]$ and let $j\in \{1,\ldots,q\}$ be a free color at $v$.
Suppose that color $j$ is blocked for at most $(1-\gamma)f$ of the free neighbors of $v$  
Then for any $w\in [0,1]$,
\[
\pr_{G,w}[\Phi(v)=j]\leq \frac{(1-w)\exp\left(\tfrac{-\gamma f}{qe^{1/\alpha}}\right)+w}{q-d+dw}.
\]

\end{lemma}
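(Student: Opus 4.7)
The plan is to refine the proof of Lemma~\ref{lem:prob basic} by tracking whether color $j$ appears on the neighborhood of $v$, and then to exploit the hypothesis on free neighbors to lower bound the probability that it does.

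First, I would revisit the expansion~\eqref{eq:expand prob} together with~\eqref{eq:prob E_kappa} and split the sum over $\kappa:N(v)\to[q]$ according to whether color $j$ appears in $\kappa(N(v))$. In the former case the numerator $w^{d_j(\kappa)}$ is at most $w$, while in the latter it equals $1$; in both cases the denominator in~\eqref{eq:prob E_kappa} is at least $q-d+dw$. This yields
\[
\pr_{G,w}[\Phi(v)=j]\leq \frac{(1-w)\,\pr_{G,w}[j\notin \Phi(N(v))]+w}{q-d+dw},
\]
and the lemma reduces to showing $\pr_{G,w}[j\notin \Phi(N(v))]\leq \exp(-\gamma f/(qe^{1/\alpha}))$.

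Next, let $S$ denote the set of free neighbors $u$ of $v$ at which color $j$ is free; by hypothesis $|S|\geq \gamma f$. I would order the free neighbors $u_1,\ldots,u_f$ so that the elements of $S$ come last, and apply the telescoping identity from~\eqref{eq:product expand}:
\[
\pr_{G,w}[j\notin \Phi(N(v))]=\prod_{i=1}^{f}\bigl(1-\pr_{G,w}[\Phi(u_i)=j\mid E_i]\bigr),
\]
where $E_i$ is the event $\Phi(u_k)\neq j$ for $k>i$. For $u_i\notin S$ I just use the trivial bound $\pr[\Phi(u_i)=j\mid E_i]\geq 0$, so only the factors corresponding to $u_i\in S$ contribute nontrivially.

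The key step, and the main obstacle, is to show that for $u_i\in S$ one has $\pr_{G,w}[\Phi(u_i)=j\mid E_i]\geq 1/(qe^{1/\alpha})$. This is precisely Lemma~\ref{lem:prob basic lower} applied to $u_i$ under the distribution conditional on $E_i$. To justify this I would observe that conditioning on $E_i$ is equivalent to a list-coloring constraint that forbids color $j$ at the vertices $u_k$ with $k>i$; under this modification the spatial Markov property and the bound from Lemma~\ref{lem:prob basic} used inside the proof of Lemma~\ref{lem:prob basic lower} remain valid, since the maximum degree is preserved and color $j$ is still free at $u_i$. Combining these bounds with $|S|\geq \gamma f$ and $(1-x)^n\leq e^{-nx}$ then gives
\[
\pr_{G,w}[j\notin \Phi(N(v))]\leq \Bigl(1-\tfrac{1}{qe^{1/\alpha}}\Bigr)^{|S|}\leq \exp\!\Bigl(-\tfrac{\gamma f}{qe^{1/\alpha}}\Bigr),
\]
which substituted into the first inequality completes the proof.
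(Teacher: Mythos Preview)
Your proposal is correct and follows essentially the same route as the paper's proof: split over whether $j$ appears on $N(v)$ to reduce to bounding $\pr_{G,w}[j\notin\Phi(N(v))]$, then use the telescoping product over free neighbors and the lower bound $\pr[\Phi(u_i)=j\mid E_i]\geq 1/(qe^{1/\alpha})$ for the neighbors where $j$ is free. The only cosmetic differences are that the paper restricts the product to the set $S$ directly (via $\pr[j\notin\Phi(N(v))]\leq \pr[j\notin\Phi(S)]$) rather than ordering $S$ last and bounding the other factors by $1$, and the paper re-derives the conditional version of Lemma~\ref{lem:prob basic lower} explicitly instead of appealing to it as a list-coloring modification.
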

\begin{proof}
Throughout we will fix $w$ and just write $\pr_G$ instead of $\pr_{G,w}$.
By~\eqref{eq:expand prob} we have     
\begin{align*}
    \pr_{G}[\Phi(v)=j]=\sum_{\kappa}\pr_G[\Phi(v)=j\mid E_\kappa]\pr_{G}[E_\kappa],
\end{align*}
where $E_\kappa$ denotes the event that the random coloring $\Phi$ agrees with $\kappa$ on $N(v)$, and we sum over all colorings $\kappa$ of the neighbors of $v$ for which $\pr_{G}[E_\kappa]\neq 0$.
By~\eqref{eq:prob E_kappa}, for a given $\kappa$ that does not use color $j$, we can bound $\pr_G[v=j\mid E_\kappa]$ by $\tfrac{1}{q-d+dw}$, and if $\kappa$ does use color $j$, then we bound this probability by $\tfrac{w}{q-d +dw}$.
So we can bound $\pr_G[\Phi(v)=j]$ by 
\begin{align*}
    \frac{1-w}{q-d+dw}\pr_{G}[j\notin \Phi(N(v))] +\frac{w}{q-d+dw}.
\end{align*}

Our next step is to bound $\pr_{G}[j\notin \Phi(N(v))]$.
Let $u_1,\ldots, u_t$ be the free neighbors of $v$ for which color $j$ is not blocked.
Note that $t\geq \gamma f.$
Then as in~\eqref{eq:product expand}, we can write
\begin{align}
\pr_{G}[j\notin\Phi(N(v) ]\leq \prob_G[j\notin \Phi\{u_1,\ldots,u_t\}]=\prod_{i=1}^t \pr_{G}[\Phi(u_i)\neq j\mid E_i],\label{eq:bound on not using 1}
\end{align}
where $E_i$ denotes the event that the random coloring $\Phi$ does not use color $j$ on the vertices $u_{i+1},\ldots,u_t$.

We next claim that for each $i=1,\ldots, t$ we have the following the lower bound
\begin{align}\label{eq:bound on u_i=1}
\pr_{G}[\Phi(u_i)=j \mid E_i]\geq \frac{1}{qe^{1/\alpha}}.
\end{align}
We prove~\eqref{eq:bound on u_i=1} below, but first we plug this into~\eqref{eq:bound on not using 1} to obtain
\begin{align*}
\pr_{G}[j\notin\Phi(N(v) ]&\le  \left(1-\frac{1}{q e^{1/\alpha}}\right)^{t}\le  \left(1-\frac{1}{q e^{1/\alpha}}\right)^{\gamma f}
\leq \exp\left(\tfrac{-\gamma f}{qe^{1/\alpha}}\right).
\end{align*}

It thus remains to show~\eqref{eq:bound on u_i=1}.
If it were not for the event $E_i$, this would be a direct consequence of Lemma~\ref{lem:prob basic lower}. 
We follow the proof of that lemma to show that the same bound still applies.
We again expand 
\begin{align*}
    \pr_{G}[\Phi(u_i)=j \mid E_i]=\sum_{\kappa}\pr_G[\Phi(v)=j\mid E_\kappa \cap E_i]\pr_{G}[E_\kappa\mid E_i],
\end{align*}
where the sum is over $\kappa$ for which $\Pr_G[E_\kappa\cap E_i]>0$.
By~\eqref{eq:prob E_kappa}, we can lower bound $\pr_G[\Phi(v)=j\mid E_\kappa \cap E_i]$ for any such $\kappa$ that additionally does not assign color $j$ to the neighbors of $u_i$ by $1/q$.
It thus suffices to lower bound $ \pr_{G}[j\notin \Phi(N(u_i))\mid E_i]$ by $e^{-1/\alpha}$.
To this end denote by $v_1,\ldots,v_d$ the neighbors of $u_i$, and note that by assumption none of them is pinned to color $j$. 
Then as in~\eqref{eq:product expand} we have
\begin{align*}
    \pr_{G}[j \notin \Phi(N(v))\mid E_i] &= \prod_{k=1}^d  \pr_{G}[ \Phi(v_k) \neq j \mid \hat{E}_k] \nonumber
    \\
    &=  \prod_{k=1}^d \left( 1- \pr_{G}[ \Phi(v_k) = j \mid \hat{E}_k] \right),
\end{align*}
where $\hat{E}_k$ denotes the event that $\Phi$ does not use color $j$ on $v_{k+1},\ldots,v_d$ and not on $u_{i+1},\ldots, u_{t}$.   
Now since $q>\Delta+1$, we can again use the Markov property (cf.~\eqref{eq:prob E_kappa}) to bound  $\pr_{G}[ \Phi(v_k) = j \mid \hat{E}_k]$ by $\tfrac{1}{q-\Delta+\Delta w}$.
It then follows as in~\eqref{eq:lower bound  on not using 1} that $ \pr_{G}[j\notin \Phi(N(u_i))\mid E_i]$ is at least $e^{-1/\alpha}$ and therefore~\eqref{eq:bound on u_i=1} holds. 
\end{proof}

We record here some concrete applications of the previous lemma that will be used in the proof of Theorem~\ref{thm:main expanded} in the next section.
\begin{corollary}\label{cor: few blocked}
  Let $q$ and $\Delta\geq 500$ be positive integers such that $q\geq (2-\eta)\Delta$ for some $0\leq\eta\le 0.002$, let $G$ be a partially $q$-colored graph of maximum degree $\Delta$, and let $v\in V(G)$ be a free vertex of degree $d$ and with free-degree $f\le \Delta-1$.
Let $\gamma\in [0,1]$ and let $j\in \{1,\ldots,q\}$ be a free color at $v$.
Suppose that color $j$ is blocked for at most $(1-\gamma)f$ of the neighbors of $v$.
Then, if $w\in [0,1]$ and 
\begin{itemize}
   \item $\gamma\ge 0.02$, then $\pr_{G,w}[\Phi(v)=j]\leq \tfrac{1}{f+1}$,
   \item $\gamma\ge 0.14$, then $\pr_{G,w}[\Phi(v)=j]\leq \tfrac{0.977}{f+1}$.
\end{itemize}
\end{corollary}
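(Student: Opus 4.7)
The plan is to apply Lemma~\ref{lem: few blocked} and then optimize the resulting bound. Take $\alpha = 1-\eta-1/\Delta$, so that $q\geq (2-\eta)\Delta\geq (1+\alpha)\Delta+1$; under the hypotheses we have $\alpha\geq 0.996$, so $e^{1/\alpha}$ is close to $e$. The lemma then yields
\[
\pr_{G,w}[\Phi(v)=j]\;\leq\; h(w)\;:=\;\frac{(1-w)e^{-c}+w}{q-d+dw},\qquad c:=\frac{\gamma f}{q\, e^{1/\alpha}}.
\]
A short computation shows that $h$ is a monotone linear-fractional function of $w\in[0,1]$ (its derivative has constant sign $(1-e^{-c})(q-d) - de^{-c}$), so it suffices to bound $h$ at the two endpoints. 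At $w=1$ we have $h(1)=1/q$, and the required bound $h(1)\leq K/(f+1)$ (with $K\in\{1,0.977\}$) reduces to $f+1\leq Kq$, which holds since $f\leq\Delta-1$ and $q\geq 1.998\Delta$.

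At $w=0$ the bound $h(0)=e^{-c}/(q-d)\leq K/(f+1)$ rearranges to $(f+1)e^{-c}\leq K(q-d)$. If $f+1\leq K(q-d)$ this is immediate from $e^{-c}\leq 1$; since $q-d\geq (1-\eta)\Delta$, this trivial case covers everything outside a narrow window of $f$-values near $\Delta-1$, namely $f\geq K(1-\eta)\Delta-1$. In that remaining window we must exploit the exponential factor.

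A monotonicity check shows the worst case is $f=\Delta-1$, $d=\Delta$, and $q$ equal to its lower bound $(2-\eta)\Delta$: the left-hand side is increasing in $f$, the right-hand side decreasing in $d$, and a short computation (verifying that $\gamma f(q-d)/(q^2 e^{1/\alpha})<1$ in our regime) shows the ratio is also decreasing in $q$. There the required inequality becomes
\[
\ln\!\left(\frac{1}{K(1-\eta)}\right)\;\leq\;\frac{\gamma(\Delta-1)}{(2-\eta)\Delta\, e^{1/\alpha}}.
\]
The main obstacle is verifying this essentially tight numerical inequality, which is why the statement restricts to $\Delta\geq 500$ and $\eta\leq 0.002$. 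As $\Delta\to\infty$ the right-hand side tends to $\gamma/((2-\eta)e^{1/(1-\eta)})$ and the left to $\ln(1/(K(1-\eta)))$; plugging in $(\gamma,K)=(0.02,1)$ gives approximately $0.00366\geq 0.00200$, while $(\gamma,K)=(0.14,0.977)$ gives approximately $0.02573\geq 0.02527$. For finite $\Delta\geq 500$ the corrections from replacing $\Delta$ by $\Delta-1$ and $1-\eta$ by $\alpha=1-\eta-1/\Delta$ are small enough that both strict inequalities survive.
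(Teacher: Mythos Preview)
Your proof is correct and follows essentially the same route as the paper: apply Lemma~\ref{lem: few blocked} with $\alpha=1-\eta-1/\Delta$, reduce to the worst-case parameters $w=0$, $d=\Delta$, $f=\Delta-1$, $q=(2-\eta)\Delta$, and verify the resulting numerical inequality. The paper's only organizational difference is that it argues the bound is monotone decreasing in $w$ (so only $w=0$ needs checking) and fixes $\alpha=0.996$, $\eta=0.002$ at the outset rather than carrying $\eta$ as a parameter.

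One point to tighten: your final sentence asserts that the finite-$\Delta$ corrections ``are small enough that both strict inequalities survive,'' but in the case $(\gamma,K)=(0.14,0.977)$ the margin is razor-thin (roughly $0.02562$ versus $0.02527$ at $\Delta=500$, $\eta=0.002$). You should carry out that evaluation explicitly rather than leave it as a claim, and you should state that $\eta=0.002$ is the worst case over the allowed range (both sides of your displayed inequality move in the right direction as $\eta$ decreases). The paper does this by producing the closed-form bound $\exp\bigl(-0.998\gamma/(1.998\,e^{1000/996})\bigr)/0.998$ and evaluating it at $\gamma=0.02$ and $\gamma=0.14$.
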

\begin{proof} 
Let $d$ be the degree of $v$ and apply Lemma~\ref{lem: few blocked} with $\alpha=1-\eta-1/\Delta\geq 0.996$. Now the bound on $\pr_{G,w}[\Phi(v)=j]$ from Lemma~\ref{lem: few blocked} is clearly decreasing in terms of $\alpha$,  $w$ and increasing in terms of $d$. 
Therefore, we may assume that $w=0$, $\alpha=0.996$ and $d=\Delta$. Then
\begin{align*}
\pr_{G}[\Phi(v)=j]\leq \frac{\exp\left(\tfrac{-\gamma f}{qe^{1000/996}}\right)}{q-\Delta}.
\end{align*}
Let us denote $g:[0,\Delta-1]\to \mathbb{R}$ to be the function defined by $g(f)=(f+1)\exp\left(-\frac{\gamma f}{qe^{1000/996}}\right)$. This function is increasing on $[0,\Delta-1]$, therefore we have
\[
(f+1)\pr_{G}[\Phi(v)=j]\leq \frac{\Delta\exp\left(\tfrac{-\gamma (\Delta-1)}{qe^{1000/996}}\right)}{q-\Delta},
\]
that is decreasing in $q$ for $\gamma\in[0,1]$. Thus choosing $q=1.998\Delta$ we have
\[
(f+1)\pr_{G}[\Phi(v)=j]\leq \frac{\Delta\exp\left(\tfrac{-\gamma (1-1/\Delta)}{1.998e^{1000/996}}\right)}{0.998}\le \frac{\exp\left(\tfrac{-0.998 \gamma }{1.998e^{1000/996}}\right)}{0.998} .
\]
As this bound is decreasing in $\gamma$, by choosing $\gamma=0.02$ and $\gamma=0.14$ we obtain the desired statement, as can be numerically verified.
\end{proof}

The next lemma is well known, variations have for example been used in~\cite{GKMssm,Luetalfptasforcubic}. We give a proof for completeness.
\begin{lemma}\label{lem:eion trick}
Let $q$ be a positive integer.
Let $G$ be partially $q$-colored graph of maximum degree $\Delta\leq q-1$ and let $v\in V(G)$ be a free vertex. 
Then for any $w\geq 0$,
\begin{equation}\label{eq:eoin trick}
      \pr_{G,w}[\Phi(v)=q]=\frac{\mathbb{E}_{G-v,w}[w^{c_q\left(\Phi|_{N(v)}\right)}]}{\sum_{i=1}^{q}\mathbb{E}_{G-v,w}[w^{c_i\left(\Phi|_{N(v)}\right)}]}.
\end{equation}
  
\end{lemma}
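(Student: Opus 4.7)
The plan is to express both sides as ratios of partition functions and then to factor the partition function of $G$ by conditioning on the coloring of $G-v$.

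First I would write the marginal as
\[
\pr_{G,w}[\Phi(v)=q] = \frac{Z^q_{G,v}(w)}{Z_G(w)} = \frac{Z^q_{G,v}(w)}{\sum_{i=1}^q Z^i_{G,v}(w)}.
\]
So it suffices to show that for every color $i \in [q]$ one has
\[
Z^i_{G,v}(w) = Z_{G-v}(w)\cdot \mathbb{E}_{G-v,w}\!\left[w^{c_i(\Phi|_{N(v)})}\right],
\]
after which dividing numerator and denominator by the common factor $Z_{G-v}(w)$ gives the identity (and this factor is strictly positive for $w\ge 0$ since $q\ge \Delta+1$ guarantees that $\phi$ extends to a proper coloring of $G-v$, producing at least one nonzero summand).

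The key step is the factorization. Given any extension $\psi:V(G)\to[q]$ of $\phi$ with $\psi(v)=i$, decompose the monochromatic edges into those inside $G-v$ and those incident to $v$; the latter number is precisely $c_i(\psi|_{N(v)})$, the number of neighbors of $v$ assigned color $i$. Since $v$'s color is fixed, summing over extensions with $\psi(v)=i$ is the same as summing over extensions $\psi'$ of $\phi$ to $V(G)\setminus\{v\}$, and therefore
\[
Z^i_{G,v}(w) = \sum_{\psi'} w^{m_{G-v}(\psi')}\,w^{c_i(\psi'|_{N(v)})} = Z_{G-v}(w)\cdot \mathbb{E}_{G-v,w}\!\left[w^{c_i(\Phi|_{N(v)})}\right],
\]
which is what we needed.

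I do not anticipate any real obstacle: the lemma is a direct bookkeeping consequence of the spatial Markov property, and the only mild subtlety is to justify that $Z_{G-v}(w)\neq 0$ so that dividing is legitimate, which follows from $q\ge \Delta+1$ together with $w\ge 0$.
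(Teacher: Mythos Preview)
Your proof is correct and follows essentially the same approach as the paper: both establish the identity $Z^i_{G,v}(w)=Z_{G-v}(w)\cdot\mathbb{E}_{G-v,w}[w^{c_i(\Phi|_{N(v)})}]$ by splitting monochromatic edges into those inside $G-v$ and those incident to $v$, and then divide numerator and denominator by $Z_{G-v}(w)$, which is nonzero since $q\ge\Delta+1$.
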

\begin{proof}
Since $q\geq \Delta+1$, we know that $G-v$ can be properly colored, i.e. $Z_{G-v}(w)\neq 0$. Then we have the following identity:
\[
    \frac{Z^j_{G,v}(w)}{Z_{G-v}(w)}=\sum_{\substack{\psi:V\setminus\{v\}\to  q\\ \psi|_S=\phi}} \frac{w^{c_j\left(\psi|_{N(v)}\right)}w^{m(\psi)}}{Z_{G-v}(w)}=\mathbb{E}_{G-v,w}[w^{c_j\left(\Phi|_{N(v)}\right)}].
\]

Now the claim follows from this identity:
\begin{align*}
\pr_{G,w}[\Phi(v)=q]&=\frac{Z_{G,v}^q(w)}{Z_G(w)}
=\frac{Z_{G,v}^q(w)}{\sum_{j=1}^qZ_{G,v}^j(w)}\\
&=\frac{Z_{G,v}^q(w)/Z_{G-v}(w)}{\sum_{j=1}^qZ_{G,v}^j(w)/Z_{G-v}(w)}
=\frac{\mathbb{E}_{G-v,w}[w^{c_q\left(\Phi|_{N(v)}\right)}]}{\sum_{i=1}^{q}\mathbb{E}_{G-v,w}[w^{c_i\left(\Phi|_{N(v)}\right)}]}.
\end{align*}
\end{proof}

Our next lemma shows how to improve the bound from Lemma~\ref{lem:prob basic} under the additional assumption that the graph induced by the neighbors of the root vertex is not `close to' a clique.

\begin{lemma}\label{lem:sparse neighborhood}
Let $q$ and $\Delta$ be positive integers such that $q\geq (1+\alpha)\Delta+1$ for some $\alpha>0$ and let $G$ be a partially $q$-colored graph of maximum degree $\Delta$ such that the pinned vertices of $G$ are all leaves and let $v\in V(G)$ be a vertex of free degree $f$. 
Let $H$ be the graph induced by $N_G(v)$ and let $L$ be the collection of free colors at $v$. 
Then for any $j\in L,$ and $w\in [0,1],$
\[
\pr_{G,w}[\Phi(v)=j]\le 
\frac{1}
{|L|\left(1-\frac{1-w}{q-\Delta+1-w}\right)^{\left((q-\Delta)f+2e(H)+f\right)/|L|}\left(1-\frac{w}{q-\Delta+1}\right)^{fq/|L|}}.
\]
\end{lemma}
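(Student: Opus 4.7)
The plan is to apply Lemma~\ref{lem:eion trick}, which expresses
\[
\pr_{G,w}[\Phi(v) = j] = \frac{\mathbb{E}_{G-v,w}[w^{c_j(\Phi|_{N(v)})}]}{\sum_{i=1}^q \mathbb{E}_{G-v,w}[w^{c_i(\Phi|_{N(v)})}]}.
\]
Since $j\in L$, no pinned neighbor of $v$ carries color $j$, so $c_j(\Phi|_{N(v)}) = c_j'(\Phi)$, where $c_i'(\Phi) := |\{u \in N_f(v) : \Phi(u) = i\}|$ counts only free neighbors of $v$ colored $i$. The same identity holds for every $i\in L$. Dropping the terms with $i\notin L$ from the denominator (which only makes it smaller) reduces the task to lower-bounding $\sum_{i\in L}\mathbb{E}[w^{c_i'}]$ in terms of $|L|\,A^{a}B^{b}\,\mathbb{E}[w^{c_j'}]$, where $A$ and $B$ are as in the statement.

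The cornerstone of the argument is the pointwise identity
\[
\prod_{i=1}^q w^{c_i'(\Phi)} = \prod_{u\in N_f(v)} w^{\sum_{i=1}^q \mathbf{1}[\Phi(u)=i]} = w^f,
\]
so $\prod_{i\in L} w^{c_i'(\Phi)} \geq w^f$ pointwise (the omitted factors being at most $1$). Combined with the AM-GM inequality, this already yields the crude bound $\sum_{i\in L} w^{c_i'} \geq |L|\,w^{f/|L|}$. To sharpen it to the stated bound, I would condition on the coloring of $V\setminus (\{v\}\cup N(v))$ and invoke the spatial Markov property: given this outside coloring, the distribution on $N_f(v)$ becomes a Gibbs measure whose interactions are mediated by the edges of $H$. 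For each free neighbor $u\in N_f(v)$, whose degree in $G-v$ is at most $\Delta-1$, Lemma~\ref{lem:prob basic} bounds the conditional marginals $\pr[\Phi(u)=i]$; the free-at-$u$ case gives rise to the factor $A=1-\tfrac{1-w}{q-\Delta+1-w}$, while the blocked-at-$u$ case gives rise to the factor $B=1-\tfrac{w}{q-\Delta+1}$. A peeling or telescoping argument over the free neighbors of $v$, combined with Jensen's inequality applied to $w^x$ (convex in $x$ for $w\in[0,1]$), then produces a lower bound of the desired shape. The exponent $2e(H)$ arises because each edge of $H$ joining two free neighbors of $v$ is counted once at each endpoint during the peeling, while the exponents $(q-\Delta)f+f$ and $fq$ come from summing the respective $A$- and $B$-contributions across the $f$ free neighbors.

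The main technical obstacle is the handling of the correlations between free neighbors of $v$ induced by the edges of $H$, which prevent a direct factorization of the expectations $\mathbb{E}[w^{c_i'}]$ into products of single-neighbor contributions. Controlling these correlations with the correct direction of inequality during the peeling step should rely crucially on the anti-ferromagnetic nature of the Potts model, i.e.\ on the assumption $w\in[0,1]$ which ensures that conditioning on a neighbor receiving a given color only decreases the probability of nearby vertices receiving the same color.
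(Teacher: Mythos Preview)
Your high-level plan is on the right track (apply Lemma~\ref{lem:eion trick}, restrict the denominator to $\ell\in L$, use AM--GM, then control marginals of the free neighbors via Lemma~\ref{lem:prob basic}), but you are missing the simplification that makes the argument go through cleanly. The paper does \emph{not} try to compare the denominator to the numerator $\mathbb{E}[w^{c_j'}]$; it simply bounds the numerator by~$1$, and then after AM--GM lower-bounds each factor by the crude estimate $\mathbb{E}_{G-v}[w^{c_\ell(\Phi|_{N(v)})}]\ge \pr_{G-v}[\ell\notin\Phi(N(v))]$. This reduces everything to avoidance probabilities, which factor via the standard telescoping $\pr[\ell\notin\Phi(N(v))]=\prod_{i=1}^f\bigl(1-\pr[\Phi(v_i)=\ell\mid E_i^\ell]\bigr)$ and are then bounded using Lemma~\ref{lem:prob basic} according to whether $\ell\in Q_i$ or not. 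With this route the ``correlation obstacle'' you highlight simply does not arise: one never needs to analyze the joint law on $N_f(v)$ conditioned on the outside, nor Jensen on $w^x$.

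Two smaller points. First, your pointwise identity $\prod_{i} w^{c_i'}=w^f$ and the ensuing Jensen/conditioning sketch do not by themselves produce the factors $A,B$ with the stated exponents; in the paper these exponents come from swapping the order of the products $\prod_{\ell\in L}\prod_{i=1}^f$ and summing $|L\setminus Q_i|$ and $|L\cap Q_i|$ over~$i$, together with a Bernoulli-type monotonicity step. Second, your attribution of $A=1-\tfrac{1-w}{q-\Delta+1-w}$ to ``the free-at-$u$ case'' is not quite right: the direct free/blocked factors are $1-\tfrac{1}{q-\Delta+1}$ and $1-\tfrac{w}{q-\Delta+1}$, and $A$ only appears after an algebraic recombination of these two, which is also where the exponent $q$ on $B$ (rather than a per-vertex count) comes from.
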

\begin{proof}
In the proof we will fix $w$ and just write $\pr_G$ instead of $\pr_{G,w}$.
We will apply Lemma~\ref{lem:eion trick}. 
We can just upper bound the numerator in~\eqref{eq:eoin trick} by $1$.
We thus need a lower bound on \[\sum_{\ell=1}^{q}\mathbb{E}_{G-v}[w^{c_\ell\left(\Phi|_{N(v)}\right)}].\] 
Let ${\bf c}={\bf c}_{G,v}$ and let us denote the free neighbors of $v$ by $v_1,\ldots,v_f$.
We have
    \begin{align*}
    \sum_{\ell=1}^{q}\mathbb{E}_{G-v}[w^{c_\ell\left(\Phi|_{N(v)}\right)}]&\ge \sum_{\ell\in L}\mathbb{E}_{G-v}[w^{c_\ell\left(\Phi|_{N(v)}\right)}],
    \end{align*}
    which we can lower bound by AM-GM by
    \begin{align*}
    |L|\prod_{\ell\in L}\mathbb{E}_{G-v}[w^{c_\ell\left(\Phi|_{N(v)}\right)}]^{1/|L|}& \ge |L|\prod_{\ell\in L}\pr_{G-v}[\ell\notin\Phi(N)]^{1/|L|}\\
    &=|L| \prod_{\ell\in L} \prod_{i=1}^f \left(1- \pr_{G-v}[ \Phi(v_i)=\ell \mid E_i^\ell ]\right)^{1/|L|}\\
    &= |L| \prod_{i=1}^f  \prod_{\ell\in L}  \left(1- \pr_{G-v}[ \Phi(v_i)=\ell \mid E_i^\ell ]\right)^{1/|L|},
\end{align*}
where $E_i^\ell$ is the event that $v_{i+1}, \ldots, v_f$ do not use color $\ell$.
Let $Q_i \subseteq [q]$ be the set of colors appearing in the neighborhood of $v_i$ (in $G$), and let $d_H(v_i)$ denote $\deg_H(v_i)$. 
Note that $|Q_i| + d_H(v_i) \le \Delta-1$, since the pinned neighbors of $v$ are isolated vertices of $H$.
By Lemma~\ref{lem:prob basic} if $\ell\notin Q_i$, then
\[
    \pr_{G-v}[ \Phi(v_i)=\ell \mid E^\ell_i ]   \leq \frac{1}{q - |Q_i| - d_H(v_i)}
\]
and if $\ell\in Q_i$, then
\[
    \pr_{G-v}[ \Phi(v_i)=\ell \mid E^\ell_i]   \leq \frac{w}{q - |Q_i| - d_H(v_i)}.
\]
Hence, for a fixed $i$, we obtain that $\prod_{\ell \in L}  \left(1- \pr_{G-v}[ \Phi(v_i)=\ell \mid E^\ell_i ]\right)$ is at least
\begin{align}\label{eq:prod bound}
\left(1- \frac{1}{q - |Q_i| - d_H(v_i)}\right)^{|L\setminus Q_i|}\left(1- \frac{w}{q - |Q_i| - d_H(v_i)}\right)^{|L\cap Q_i|}.
\end{align}
We next show how to bound both factors in~\eqref{eq:prod bound}.

Since $q-|Q_i|\ge |L\setminus Q_i|$ and $q-|Q_i|\ge q-(\Delta-1)+d_H(v_i)$ we have
\begin{align*}
     &\left(1- \frac{1}{q - |Q_i| - d_H(v_i)}\right)^{|L\setminus Q_i|} \geq
     \left(1- \frac{1}{q - |Q_i| - d_H(v_i)}\right)^{q - |Q_i|}  \\ 
     =& \left(1- \frac{1}{q - |Q_i| - d_H(v_i)}\right)^{\frac{(q-|Q_i|-d_H(v_i))}{q-(\Delta-1)}\frac{(q - |Q_i|)(q-(\Delta-1))}{(q-|Q_i|-d_H(v_i))}}
     \\
     \geq & \left(1- \frac{1}{q - (\Delta-1)}\right)^{\frac{(q - |Q_i|)(q-(\Delta-1)}{q-|Q_i|-d_H(v_i)}}
     \geq \left(1- \frac{1}{q - (\Delta-1)}\right)^{q-\Delta+d_H(v_i)+1}.
\end{align*}
where in the second inequality we use Bernoulli inequality, and in the last inequality we used the fact that $x\mapsto \frac{x}{x-d_H(v_i)}$ is decreasing in $x$ for $x>d_H(v_i)$.

Since the function $x\mapsto (1-\tfrac{w}{q-x-d_{H}(v_i)})^x$ is monotonically decreasing in $x$ for $0\le x <q- d_{H}(v_i)-w$, we have
\[
\left(1-\frac{w}{q-|L\cap Q_i|-d_H(v_i)}\right)^{|L\cap Q_i|}\ge \left(1-\frac{w}{q-\Delta+1}\right)^{\Delta-1-d_H(v_i)}.
\]
Together this implies that $\prod_{\ell \in L}  \left(1- \pr_{G-v}[ \Phi(v_i)=\ell \mid E_i^\ell ]\right)$ is lower bounded by
\begin{align*}
&\left(1- \frac{1}{q - \Delta+1}\right)^{q-\Delta+1+d_H(v_i)}\left(1- \frac{w}{q - \Delta+1}\right)^{\Delta-1-d_H(v_i)}
\\
=& \left(1- \frac{1}{q - \Delta+1}\right)^{q-\Delta+1+d_H(v_i)} \left(1- \frac{w}{q - \Delta+1}\right)^{(\Delta-1-d_H(v_i) -q)+q} 
\\
= 
&\left(1-\frac{1-w}{q-\Delta+1-w}\right)^{q-\Delta+1+d_H(v_i)}\left(1-\frac{w}{q-\Delta+1}\right)^q.
\end{align*}
Taking the product over the free vertices and realizing that $2e(H)$ is the sum of the degree of the vertices in $H$, as $d_H(v_i)=0$ for a pinned vertex $v_i$, gives us the desired lower bound for $\sum_{\ell=1}^{q}\mathbb{E}_{G-v}[w^{c_\ell(\Phi|_{N(v)})}]$ and this finishes the proof.
\end{proof}

We record as a corollary of this lemma a version with explicit bounds on some of the parameters involved, making it easier to apply in our proof of Theorem~\ref{thm:main expanded}.

\begin{corollary}\label{cor:sparse neighborhood}
Let $q$ and $\Delta$ be positive integers such that $q\geq (2-\eta)\Delta$ for some $0\leq \eta\leq 0.002$ and $\Delta\ge 500$. Let $G$ be a partially $q$-colored graph of maximum degree $\Delta$ whose pinned vertices are all leaves and let $v\in V(G)$ be a vertex of free degree $f\geq (1-\eta)\Delta-2/3$. 
Let $H$ be the graph induced by $N_G(v)$ and let $L$ be the collection of free colors at $v$ and assume that $H$ has average degree $\overline{d} \le 0.36f$.
Then for any $j\in L,$ and $w\in [0,0.002],$
\[
\pr_{G,w}[\Phi(v)=j]\le \tfrac{1}{f+1}.
\]
\end{corollary}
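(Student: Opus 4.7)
The plan is to apply Lemma~\ref{lem:sparse neighborhood} directly and reduce the conclusion to a short numerical verification. Writing $k := q - \Delta$, it suffices, by that lemma, to show
\[
|L|\left(1-\tfrac{1-w}{k+1-w}\right)^{(kf+2e(H)+f)/|L|}\left(1-\tfrac{w}{k+1}\right)^{fq/|L|} \ge f+1.
\]
Taking logarithms and using the elementary bound $-\ln(1-u)\le u/(1-u)$, a short computation (with $u_1/(1-u_1)=(1-w)/k$ and $u_2/(1-u_2)=w/(k+1-w)$ for the two values of $u$ appearing in the exponents) reduces the task to showing
\begin{equation}\label{eq:sparse-key-plan}
\ln\!\left(\frac{|L|}{f+1}\right) \ge \frac{(1-w)(kf+2e(H)+f)}{|L|\,k} + \frac{fqw}{|L|(k+1-w)}.
\end{equation}

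Next I would insert the two structural bounds available to us. Since each pinned neighbor of $v$ blocks at most one color at $v$, one has $|L|\ge q-(\Delta-f)=k+f$. The average-degree hypothesis gives $2e(H)=\sum_{u\in N(v)} d_H(u) \le \overline{d}\,f \le 0.36\,f^{2}$. Substituting both bounds into~\eqref{eq:sparse-key-plan} turns it into a purely numerical inequality in the parameters $(\Delta,f,w,\eta)$, where $\eta$ is constrained by $k\ge (1-\eta)\Delta$.

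The heart of the proof is then a monotonicity reduction to a single worst case. A routine partial-derivative calculation shows that the right-hand side of~\eqref{eq:sparse-key-plan} is increasing in each of $f$, $w$, and $\eta$ in the relevant ranges, while $(k+f)/(f+1)$ on the left is strictly decreasing in $f$ for $k\ge 1$. So we may assume $f=\Delta$ (which, since $\deg(v)\le\Delta$, forces $v$ to have no pinned neighbors and hence $|L|=q$), $w=0.002$, and $\eta=0.002$. A further derivative check, after normalizing $k=(1-\eta)\Delta$, $q=(2-\eta)\Delta$, shows the right-hand side is decreasing in $\Delta$ while the left-hand side is increasing in $\Delta$, so we may also take $\Delta=500$. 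A direct plug-in at this extremal point yields approximately $\ln(999/501)\approx 0.690$ on the left and $\approx 0.683$ on the right, confirming~\eqref{eq:sparse-key-plan}.

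The main obstacle, visible already from the above, is the very slim margin of roughly $0.008$ at the worst-case point; this is exactly why the numerical thresholds $\overline{d}\le 0.36\,f$, $\eta\le 0.002$, $w\le 0.002$, and $\Delta\ge 500$ appear in the hypothesis. In particular, replacing the crisp estimate $-\ln(1-u)\le u/(1-u)$ by a cruder substitute (say $-\ln(1-u)\le 2u$) would destroy the margin, and the monotonicity argument has to be done in all four variables carefully before the numerics can be performed.
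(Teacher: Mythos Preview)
Your approach is essentially the same as the paper's: both start from Lemma~\ref{lem:sparse neighborhood}, feed in $|L|\ge q-\Delta+f$ and the average-degree hypothesis, and reduce to a numerical verification. The paper bounds the two base factors directly via iterated Bernoulli-type inequalities (arriving at the constants $0.5053$ and $0.9979$) and then plugs in $\Delta=500$, whereas you take logarithms, use $-\ln(1-u)\le u/(1-u)$, and carry out an explicit monotonicity reduction in $(f,w,\eta,\Delta)$ to the single extremal point $(\Delta,0.002,0.002,500)$. Your route makes the worst-case choice of each parameter slightly more transparent; the final margin is comparably slim in both.

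One slip to flag: the inequality $\sum_{u\in N(v)}d_H(u)\le \overline d\,f$ points the wrong way. The average $\overline d$ is taken over all $|N_G(v)|=\deg(v)\ge f$ vertices of $H$ (the pinned ones being isolated), so in fact $2e(H)=\overline d\cdot\deg(v)\ge \overline d\,f$. The fix is painless: use instead $2e(H)=\overline d\cdot\deg(v)\le 0.36\,f\cdot\Delta$, combining $\overline d\le 0.36f$ with $\deg(v)\le\Delta$. With this correction the only $f$-dependence left in the right-hand side of your key inequality is the factor $f/(k+f)$, which is still increasing in $f$, and at the extremal point $f=\Delta$ the two bounds coincide, so your final numerics stand unchanged.
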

\begin{proof}
First note that the number of free colors $|L|\ge q-\Delta+f$, thus $\frac{f}{|L|}\le\frac{f+1}{|L|}\le  \frac{\Delta}{q-1}\le \frac{1}{2-\eta-1/\Delta}$ and 
\[
    \frac{(q-\Delta+\overline{d}+1)f}{|L|}\le \frac{q-\Delta+0.36 \Delta+1}{2-\eta-1/\Delta}.
\]
Now let us apply Lemma~\ref{lem:sparse neighborhood} and use the previous bounds to bound the exponents in the denominator. Thus, we obtain
\[
\pr_{G,w}[\Phi(v)=j]\leq \frac{1}
{|L| \left(1-\frac{1-w}{q-\Delta+1-w}\right)^{\left(q-\Delta+0.36\Delta+1\right)/(2-\eta-1/\Delta)}\left(1-\frac{w}{q- \Delta+1}\right)^{q/(2-\eta-1/\Delta)}}.
\]
Now let us bound the functions appearing in the denominator. Using the Bernoulli inequality we have
\[
\left(1-\frac{1-w}{q-\Delta+1-w}\right)^{\left(q-\Delta+0.36\Delta+1\right)/(2-\eta-1/\Delta)}\ge  \left(1-\frac{1-w}{(1-\eta)\Delta}\right)^{\frac{\left(q-\Delta+0.36\Delta+1\right)(1-\eta)\Delta}{\left(q-\Delta+1-w\right)(2-\eta-1/\Delta)}},
\]
where the exponent is decreasing in $q$, therefore we could further bound it by
\begin{align*}
&\left(1-\frac{1-w}{(1-\eta)\Delta}\right)^{\frac{\left((1-\eta)\Delta+0.36\Delta+1\right)(1-\eta)\Delta}{\left((1-\eta)\Delta+1-w\right)(2-\eta-1/\Delta)}}\ge  \left(1-\frac{1-w}{(1-\eta)\Delta}\right)^{\frac{\left((1-\eta)\Delta+0.36\Delta\right)}{(2-\eta-1/\Delta)}}\\
&\ge \left(1-\frac{1-w}{(1-\eta)\Delta}\right)^{\frac{1.358\Delta}{(1.996)}}\ge 0.5053,
\end{align*}
where in the last inequality we plugged in $\Delta=500$ using the fact that the function is increasing in $\Delta$.
Similarly we obtain,
\begin{align*}
\left(1-\frac{w}{q- \Delta+1}\right)^{\tfrac{q}{(2-\eta-1/\Delta)}} &  
\ge \left(1-\frac{w}{(1-\eta)\Delta}\right)^{\frac{q(1-\eta)\Delta}{(2-\eta-1/\Delta)(q-\Delta+1)}}
\ge \left(1-\frac{w}{(1-\eta)\Delta}\right)^{\frac{(2-\eta)(1-\eta)\Delta^2}{(2-\eta-1/\Delta)((1-\eta)\Delta+1)}}
\\
&\ge\left(1-\frac{w}{(1-\eta)\Delta}\right)^{\frac{(2-\eta)\Delta}{(2-\eta-1/\Delta)}}
\ge \left(1-\frac{w}{(1-\eta)\Delta}\right)^{\frac{1.998\Delta}{1.996}}
\\
&\ge  0.9979.
\end{align*}

Therefore, we obtain that
\[
(f+1)\pr_{G}[\Phi(v)=j]\leq \frac{f+1}{0.504|L| }\leq \frac{1}{0.504(2-\eta-1/\Delta)}<1
 \]
using that $\Delta\ge 500$ and $\eta\le 0.002$, as desired.
\end{proof}

\section{Proof of Theorem~\ref{thm:main expanded} for \texorpdfstring{$\eta>0$}{eta>0}}\label{sec:proof eta>0}

\begin{proof}[Proof of Theorem~\ref{thm:main expanded} for $\bm{\eta>0}$]
Our goal is to expand on the proof for $\eta =0$ given in the previous section. 
To this end let us fix $\eta\in [0,0.002]$, positive integers $\Delta$ and $q\geq (2-\eta)\Delta$ and $w\in [0,1]$.
By the proof for the case $\eta=0$, we may assume that $\Delta\geq 500$ since $q$ is an integer.

The proof is again by induction on the number of free vertices; however, we add one more statement. We claim that there exists $\varepsilon_1,\varepsilon_2>0$ such that for any $(G,v)\in\mathcal{G}_{\Delta,q}^\bullet$ with free degree $d:=\deg_{\overline{G}}(v)\le \Delta-1$, any $i,j,k\in [q]$ and $\tilde w\in B(w,\varepsilon_1)$ we have 
\begin{enumerate}
    \item[(i)] $|R_{\overline{G},v;i,j}(w)-R_{\overline{G},v;i,j}(\tilde w)|\le \frac{(1-w)\deg_{\overline{G}}(v)+2/3}{\Delta}\varepsilon_2$,
    \item[(ii)] $\pr_{G^{+k},w}[\Phi(v)=j]\cdot |R_{\overline{G},v;i,j}(w)-R_{\overline{G},v;i,j}(\tilde w)|\le \varepsilon_2/\Delta$,
    \item[(iii)] $Z_G(\tilde w)\neq 0$.
\end{enumerate}
The proofs of (i) and (iii) are very similar to the proof for $\eta=0$. 
Moreover, the choice of $\varepsilon_1$ and $\varepsilon_2$ only depends on these parts. 
The more technical part will be the proof of (ii), where we will exploit local information around the vertex $v$ to show that either $\pr_{G^{+k},w}[\Phi(v)=j]$ is small or $|R_{\overline{G},v;i,j}(w)-R_{\overline{G},v;i,j}(\tilde w)|$ is small. 

The base case follows in exactly the same way as in the proof for $\eta =0$. In other words, if $\varepsilon_1$ is sufficiently small, then all three statements hold when there is exactly $1$ free vertex.

Next, let $(G,v)\in\mathcal{G}_{\Delta,q}^\bullet$ with $v$ of free degree $d\le \Delta-1$ with more than one free vertex. 
To prove statements (i) and (ii), we may of course assume by symmetry that $i=1$ and $j=q$.

Choose any ordering of the free neighbors of $v$ (to be specified further later).
Let $v_1,\ldots,v_d$ be the free neighbors of $v$ (in this order). 
Let for $i\in [d]$ $G_i$ be the graph obtained from $G$ via the telescoping procedure and recall that $(\overline{G}_i,v_i)$ denotes the graph obtained from $(G_i,v_i)$ by removing the pinned neighbors of $v_i$ in $G_i$.
Let $I\subseteq[d]$ be the set of indices for which $\pr_{G_i^{+1},w}[\Phi(v_i)=q]>\pr_{G_i^{+q},w}[\Phi(v_i)=1]$. Also let ${\bf c}^i={\bf c}_{G_i,v_i}$ the vector of blocked colors at $v_i$ in $G_i$ and $R^i_{j}(\tilde w)= R_{\overline{G}_i,v_i;j,q}(\tilde w)$, which by induction is well-defined.

We now prove all three statements, assuming they hold for partially $q$-colored graphs with fewer free vertices. 

\paragraph{Proof of (i).}
Following the same steps in the proof for $\eta=0$ we arrive at~\eqref{eq:bound on diff}, which states
\begin{align*}
|R_{\overline{G},v;1,q}(\tilde w)-R_{\overline{G},v;1,q}(w)|\le& \sum_{i=1}^{d}\underbrace{|F_{w,{\bf c}^i}({\bf R}^i(w))-F_{w,{\bf c}^i}({\bf R}^i(\tilde w))|}_{A_i}\\
&+\sum_{i=1}^{d}\underbrace{|F_{w,{\bf c}^i}({\bf R}^i(\tilde w))-F_{\tilde w,{\bf c}^i}({\bf R}^i(\tilde w))|}_{B_i}.
\end{align*}
Continuing the proof for $\eta=0$ we arrive at~\eqref{eq:bounding A_i}, which states 
\[
A_i\le |\langle {\bf P}_{{G}_i,v_i},{\bf x}\rangle|  +   \varepsilon_2\sup_{t\in[0,1]}\|{\bf P}_{{G}_i,v_i}-\nabla F_{w,{\bf c}^i}({{\bf y}_t})\|_1,
\]
where ${\bf x}={\bf R}^i(w)-{\bf R}^i(\tilde w)$ and ${{\bf y}_t}=t{\bf R}^i(w)+(1-t){\bf R}^i(\tilde w)$, whose infinity norms are both bounded by $\varepsilon_2$ by induction statement (i).
Thus, if $i\in I$, and if $\varepsilon_2$ is smaller than the $\delta$ from Proposition~\ref{prop:gradient and w} with $\varepsilon=\tfrac{1}{3\Delta^2}$, we can apply Proposition~\ref{prop:gradient and w} and Lemma~\ref{lem:replace by prob} to see that
we have 
\begin{align*}
A_i&\le (1-w)\pr_{G_i^{+1},w}[\Phi(v_i)=q]\cdot \max_{j=1\dots q-1}|R^i_{\overline{G}_i,v_i;j,q}(w)-R^i_{\overline{G}_i,v_i;j,q}(\tilde w)| + \varepsilon_2\tfrac{1}{3\Delta^2},
\end{align*}
which by induction statement (ii) is bounded by $(1-w)\varepsilon_2/\Delta +\tfrac{\varepsilon_2}{3\Delta^2}$.
The same reasoning gives us the same bound on $A_i$ when $i\in [d]\setminus I$.

To bound $B_i$ we use Proposition~\ref{prop:gradient and w} once more, to conclude that if $\varepsilon_1$ is smaller than the $\delta$ from Proposition~\ref{prop:gradient and w} with $\varepsilon=\tfrac{\varepsilon_2}{3\Delta^2}$ to conclude that for each $i=1,\ldots,d$, $|B_i|\le \tfrac{\varepsilon_2}{3\Delta^2}$.
Putting these bounds together we see that we obtain
\[
|R_{\overline{G},v;i,j}(\tilde w)-R_{\overline{G},v;i,j}(w)|\le \left(\frac{(1-w)d+2/3}{\Delta}\right)\varepsilon_2,
\]
thereby proving statement (i).

\paragraph{Proof of (ii).}
Now let us prove the second statement. Recall that we may assume that $i=1$ and $j=q$. 
Note that by Lemma~\ref{lem:prob basic} and the already proven item (i) we have 
\begin{align}
\pr_{G^{+k},w}[\Phi(v)=q]\cdot &|R_{\overline{G},v; 1,q}(w)-R_{\overline{G},v; 1,q}(\tilde w)|\le \nonumber
\\
    &\frac{w^{c_q}}{(1-\eta)\Delta}\cdot  \left( \frac{(1-w)d+2/3}{\Delta}\right)\varepsilon_2\label{eq:consquence of (i)}.
\end{align}
This implies that we may assume the following statements:
\begin{itemize}
\item[]\begin{itemize}
    \item[(A1)] $w\leq \eta$,
    \item[(A2)] $v$ has no pinned neighbor of color $q$,
    \item[(A3)] the free degree, $d$, of $v$ satisfies $d\ge (1-\eta)\Delta-2/3\geq 0.9966\Delta$.
\end{itemize}
\end{itemize}
Indeed, if any of (A1), (A2), (A3) does not hold, then we have that~\eqref{eq:consquence of (i)} is bounded by $\varepsilon_2/\Delta$, and we are done.

Let $H$ be the graph induced by $N_G(v)$. Next we claim that we may assume that the following statements hold:
\begin{itemize}
\item[]\begin{itemize}
    \item[(A4)] for $(1-\gamma)d\ge 0.98d$ neighbors of $v$ the color $q$ is blocked, 
    \item[(A5)] the average degree of $H$, $\overline{d}_H$, satisfies $\overline{d}_H=\beta d \geq 0.36d$.
\end{itemize}
\end{itemize}

Indeed, statements (A4) and (A5) follow because otherwise by Corollary~\ref{cor: few blocked} and Corollary~\ref{cor:sparse neighborhood} (which we can apply since by (A1) we may assume that $w\in [0,0.002]$) we would have $\pr_{G^{+k}}[\Phi(v)=q]\leq 1/\Delta$ and by the already proven item (i) we would obtain
\[
\pr_{G^{+k}}[\Phi(v)=q]\cdot |R_{\overline{G},v; 1,q}(w)-R_{\overline{G},v; 1,q}(\tilde w)|\le \frac{1}{d+1}\frac{(1-w)d+2/3}{\Delta}\varepsilon_2<\tfrac{1}{\Delta}\varepsilon_2.
\]

To proceed we will argue that these assumptions allow us to show the ratio difference $|R_{\overline{G},v; 1,q}(w)-R_{\overline{G},v; 1,q}(\tilde w)|$ has to be small. 
To do so we will take a closer look at the proof of item (i) of the induction hypothesis. 

We now specify the ordering of all the neighbors of $v$ as follows: first we have the pinned neighbors ($V_0$), then the vertices where $q$ is not blocked ($V_1$), and then the vertices where $q$ is blocked and not pinned ($V_2$). Within each group the vertices are ordered increasingly with respect to their degree in $H$.
We identify the labels of the vertices in $V_1\cup V_2$ (in an order preserving manner) with $[d]$.
Note that by (A3) we have $|V_0|\le 0.0034\Delta$ and by (A4) we have $|V_1|\le \gamma d$.
See Figure~\ref{fig:firstassumptions} for an illustration.

\begin{figure}[t]
    \centering
\includegraphics[width=0.7\textwidth]{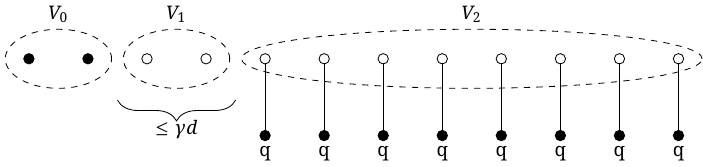}
  
      \caption{
        Illustration of the vertices in the neighborhood $H$ of vertex $v$, without the (possible) induced edges. 
        The set $V_0$ consists of the pinned vertices in $H$, $V_1$ consists of the vertices that do not have color $q$ blocked, and $V_2$ consists of the vertices where $q$ is blocked. 
        The vertices in $V_2$ are ordered from left to right, increasing in their degree in $H$. 
    }\label{fig:firstassumptions}
\end{figure}

As before, let $I\subseteq [d]$ be the set of indices of free vertices $v_i\in V(H)$, where $Z_{G_i,v_i}^q(w)>Z_{G_i,v_i}^1(w)$ (equivalently where $\pr_{G_i,w}[\Phi(v_i)=q]>\pr_{G_i,w}[\Phi(v_i)=1]$). 
Recall from the proof of item (i) that
\begin{align}
|R_{\overline{G},v;1,q}&(\tilde w)-R_{\overline{G},v;1,q}(w)| \nonumber
\\
\le&\sum_{i\in I}(1-w)\pr_{G_i^{+1},w}[\Phi(v_i)=q]\cdot \max_{j\in [q]}|R^i_{\overline{G}_i,v_i;j,q}(w)-R^i_{\overline{G}_i,v_i;j,q}(\tilde w)| \nonumber
\\
&+\sum_{i\in [d]\setminus I}(1-w)\pr_{G_i^{+q},w}[\Phi(v_i)=1]\cdot \max_{j\in[q]}|R^i_{\overline{G}_i,v_i;j,1}(w)-R^i_{\overline{G}_i,v_i;j,1}(\tilde w)|  \nonumber
\\
&+ \varepsilon_2\tfrac{2}{3\Delta}.\label{eq:expansion of ratio in eta>0}
\end{align}

Suppose that there are at least $\eta d$ indices $i$ in $I$ such that $q$ is blocked at $v_i$ (or at least $\eta d$ vertices in $V_1\cup V_2$ where $1$ and $q$ are blocked). Note that for these vertices $\pr_{G_i^{+1},w}[\Phi(v_i)=q]\le \frac{w}{q-\Delta(1-w)}$ (resp. $\pr_{G_i^{+q},w}[\Phi(v_i)=1]\le \frac{w}{q-\Delta(1-w)}$ and $\pr_{G_i^{+1},w}[\Phi(v_i)=q]\le \frac{w}{q-\Delta(1-w)}$). Then using that $w\leq \eta\leq 0.002$ by (A1), induction item (i) for these $\eta d$ vertices and item (ii) for the remaining vertices in~\eqref{eq:expansion of ratio in eta>0} and Lemma~\ref{lem:prob basic} to bound $\pr_{G^{+k},w}[\Phi(v)=q]$ by $\tfrac{1}{q-\Delta}$, we obtain that
\begin{align*}
\pr_{G^{+k},w}[\Phi(v)=q]\cdot &|R_{\overline{G},v;1,q}(\tilde w)-R_{\overline{G},v;1,q}(w)|
\\
&\le \frac{1}{q-\Delta}\left(\eta(1-w)d\frac{w}{(1-\eta+w)\Delta} + (1-\eta)d\frac{1-w}{\Delta}+\tfrac{2}{3\Delta}\right)\varepsilon_2
\\
&\le \frac{1}{q-\Delta}\left(\eta(1-w)\frac{w}{(1-\eta+w)} + (1-\eta)(\Delta-1)\frac{1-w}{\Delta}+\tfrac{2}{3\Delta}\right)\varepsilon_2
\\
&\le \frac{1}{(1-\eta)\Delta}\left(\eta\frac{w}{1-\eta} + \left(1-\eta\right)(1-w)\right)\varepsilon_2
\\
&\le \frac{\varepsilon_2}{\Delta}\left(\left(\frac{\eta}{(1-\eta)^2}-1\right)w+1\right)\leq \frac{\varepsilon_2}{\Delta}.
\end{align*}
So we may assume further that 
\begin{itemize}
    \item[] 
    \begin{itemize}
        \item[(A6)] at most $\eta d$ vertices in $I$ have color $q$ blocked, that is $|I\cap V_2|\leq \eta d$,
        \item[(A7)] at most $\eta d$ free neighbors of $v$ have both color $1$ and color $q$ blocked.
    \end{itemize}
\end{itemize}
See Figure~\ref{fig:secondassumptions} for an illustration.

\begin{figure}[t]
    \centering
\includegraphics[width=0.7\textwidth]{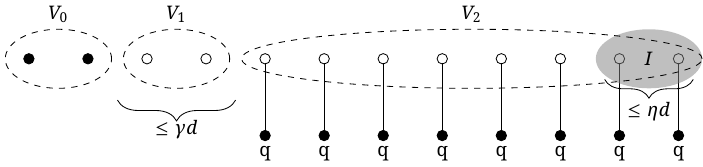}
   
\caption{
    Illustration as in Figure \ref{fig:firstassumptions}, with the additional detail that the set $I$ consists of the vertices $v_i$ such that $\pr_{G_i^{+1},w}[\Phi(v_i)=q]>\pr_{G_i^{+q},w}[\Phi(v_i)=1]$. 
    Note that the set $I$ is shown on the right for illustrative purposes, but it could be located anywhere and does not have to be concentrated on the right.
}\label{fig:secondassumptions}

\end{figure}

Now let us choose the largest index $i^\star$ such that $v_{i^\star}\in [d]\setminus I$ and such that $v_{i^\star}$ has  \\$(1-\gamma')\deg_{\overline{G}_{i^\star }}(v_{i^\star})\ge 0.86\deg_{\overline{G}_{i^\star }}(v_{i^\star})$ neighbors with $1$ blocked in $\overline{G}_{ i^\star}$ if this exists, otherwise let $i^\star=0$. We then have the following claim.

\begin{claim}\label{claim:final claim}
Under assumptions (A1)--(A7), there are $\rho d\ge 0.1 d$ indices in $N_{\overline{G}}(v)\setminus I$ that are larger than $i^\star$.     
\end{claim}

Before proving the claim, let us first show that it concludes the proof.
Indeed, if $j>i^\star$ such that $v_j \in N_{\overline{G}}(v)\setminus I $, then $v_j$ has at most $0.86\deg_{\overline{G}_j}(v_j)$ neighbors with color $1$ blocked in $\overline{G}^{+q}_j$ and therefore by Corollary~\ref{cor: few blocked} we have
\begin{equation}\label{eq:better bound prob  in eta>0}
     \pr_{G_j^{+q},w}[\Phi(v_j)=1]\le \frac{0.977}{\deg_{\overline{G}_j}(v_j)+1}.
 \end{equation}
This implies that by bounding $\pr_{G^{+k},w}[\Phi(v)=q]$ with $\tfrac{1}{q-\Delta}$ using Lemma~\ref{lem:prob basic} and plugging~\eqref{eq:better bound prob  in eta>0} into~\eqref{eq:expansion of ratio in eta>0} for these $\rho d\geq .1d$ vertices $j>i^\star$ in combination with induction item (i) and item (ii) for the remaining vertices, we obtain
\begin{align*}
\pr_{G^{+k},v}[\Phi(v)=q]\cdot |R_{\overline{G},v;1,q}(\tilde w)-R_{\overline{G},v;1,q}(w)|
&\le \frac{1}{q-\Delta}\left(0.1d(1-w)\tfrac{0.977}{\Delta} \varepsilon_2 + 0.9d\tfrac{\varepsilon_2}{\Delta}+\tfrac{2\varepsilon_2}{3 \Delta}\right)
\\
&\le \frac{1}{q-\Delta}\left(0.0977 \varepsilon_2 + 0.9(\Delta-1)\tfrac{\varepsilon_2}{\Delta}+\tfrac{2\varepsilon_2}{3 \Delta}\right)
\\
&\le
\frac{1}{(1-\eta)\Delta}\left(0.9977 \varepsilon_2 \right)\\
&<\frac{\varepsilon_2}{\Delta},
\end{align*}
finishing the proof of item (ii).

\begin{subproof}[Proof of Claim~\ref{claim:final claim}.]
To prove the claim we argue by contraction and suppose that it is not true, that is we assume 
\begin{itemize}
    \item[] \begin{itemize}
        \item [(A8)] the number of indices in $N_{\overline{G}}(v)\setminus I$ that are larger than $i^\star$, $\rho d$, is smaller than $0.1d$ 
    \end{itemize}
\end{itemize}
See Figure~\ref{fig:thirdassumptions} for an illustration.

\begin{figure}[t]

    \centering
\includegraphics[width=0.7\textwidth]{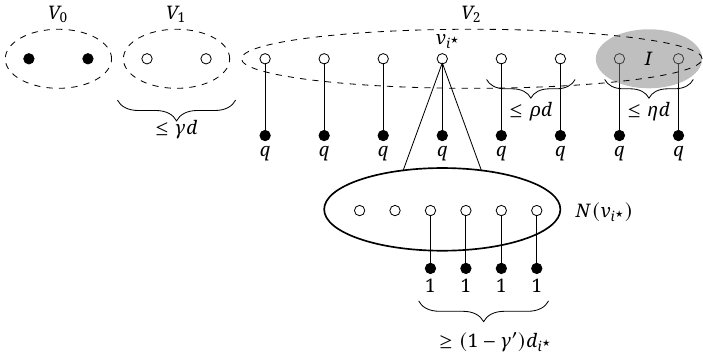}
 
     \caption{
        Illustration as in Figure \ref{fig:secondassumptions}, with the further detail that vertex $v_{i^\star}$ is the vertex in $V_2 \setminus I$ with the largest degree, such that at least $(1-\gamma')\deg_{\overline{G}_{i^\star}}$ of its neighbors have color $1$ blocked.
       For illustrative purposes, $N(v_{i^\star})$ has been drawn as disjoint from $H$. However, note that $N(v_{i^\star})$ could intersect with $H$.
    }\label{fig:thirdassumptions}
\end{figure}

Our plan is to show that this leads to a contradiction with assumption (A7).
First we show that (A8) (in combination with some of the previous assumptions) implies that 
\begin{itemize}
    \item[] \begin{itemize}
        \item [(A9)] the degree of $v_{i^\star}$ inside $H$ is at least $0.27\Delta$. 
    \end{itemize}
\end{itemize}

This follows by a density argument.
Since there are at most $\eta d$ indices in $I$ where $q$ is blocked  by (A6), this implies that there are at most $(\rho+\eta)d\leq 0.102 d$  indices in $V_2$ that are larger than $i^\star$.

To proceed let the degree of $v_{i^\star}$ in $H$ be denoted by $d_{i^\star}\le d -1$.
Then $d_{i^\star}$ is an upper bound on the degree of the vertices in $V_2$, whose index is at most $i^\star$ since inside $V_2$ the vertices are ordered by their degree in $H$.
In what follows we will upper bound the number of edges inside $H$.
Recall that by (A3) we have $d\geq 0.9966 \Delta$.
Note that the free neighbors of $v$ are the only vertices that can have a positive degree inside $H$. 
For the vertices inside $V_1$ and those inside $V_2$ that have a larger index than $i^\star$ we bound their degree in $H$ by $d-1$.
We have at most $\gamma d+ (\rho+\eta)d$ of these vertices. For the remaining vertices we have an upper bound of $d_{i^\star}$ on their degree. 
Therefore by (A5),
\[
\frac{1}{2}\beta d^2 \leq e(H)\le \frac{1}{2}\left( (\gamma + \rho+\eta)d(d-1)+(1-\gamma-\rho-\eta)d\cdot d_{i^\star}\right),
\]
and hence
\[
 d_{i^\star}\ge \frac{\beta d -(\gamma+\rho+\eta)(d-1)}{1-(\gamma+\rho+\eta)}\ge \frac{0.238}{0.878} d \geq 0.27\Delta,
\]
since $\beta\geq 0.36$, $\gamma\leq 0.02$, $\rho\leq 0.1$ (by (A8)), $\eta\leq 0.002$ and $d\ge 0.9966\Delta$.
This shows that (A9) holds.

The fact that the degree of $v_{i^\star}$ inside $H$ is large in combination with the telescoping procedure gives us information about how many neighbors of $v$ must have had color $1$ pinned before the telescoping procedure. 
Indeed, by the telescoping procedure only the neighbors of $v$ with an index larger than $i^\star$ will obtain a pinned neighbor with color $1$ in $G_{i^\star}$ (the others will receive a pinned neighbor with color $q$).
The idea is that since $v_{i^\star}$ has many neighbors inside $H$ (which we recall to be the graph induced by the free neighbors of $v$) and many neighbors inside $G_{i^\star}$ for which color $1$ is blocked, there must be an overlap of these two sets that does not include neighbors of $v$ with a larger index than $i^\star$ and hence these vertices must have had color $1$ blocked before applying the telescoping procedure to $(G,v)$.

We will now make this formal.
Let $A=N_H(v_{i^\star})$ and $B=\{u\in N_{\overline{G}_{i^\star}}(v_{i^\star})~|~ \textrm{1 is blocked at $u$}\}$. Then by (A9),
\begin{align*}
|A\cap B|&\ge d_{i^\star}+|B|-\deg_{\overline{G}_{i^\star}}(v_{i^\star})
\\
&\ge 0.27\Delta-\gamma'\Delta+1\geq 0.13\Delta.
\end{align*}
since $\gamma'\leq 0.14$ by our choice of $i^\star$.
This implies that $v_{i^\star}$ has at least $0.13\Delta$ neighbors inside $H=N_G(v)$ where color $1$ is blocked in the partially $q$-colored graph $G_{i^\star}$. 
The only pinned vertices of color $1$ in $G_{i^\star}$ that arose from the telescoping procedure are the new neighbors of the $v_i$ with $i>i^\star$. 
Since there are at most $0.102\Delta$ vertices $v_i$ with $i>i^\star$, at least $0.028\Delta$ vertices inside $H$ must already have had color $1$ blocked before the telescoping procedure was applied to $(G,v)$.

Therefore, since $|V_2|\geq (1-\gamma)d> 0.976 \Delta$ there are at least $0.004\Delta\ge 0.004 d$ free neighbors of $v$ where both color $1$ and color $q$ is blocked. 
This contradicts (A7) (since $\eta\leq 0.002$) and concludes the proof of the claim. 
\end{subproof}

Statement (iii) follows from statement (i) via Lemma~\ref{lem:lowerbound_wtilde_sum} in an identical manner as in our proof for the $\eta=0$ case.
This finishes the proof.
\end{proof}

\begin{remark}\label{rem:choice of eps1 eta>0}
As in Remark~\ref{rem:choice of eps1} we note that by the alternative proof of Proposition~\ref{prop:gradient and w} given in the appendix we have that $\varepsilon_1$ can be chosen to be $C'\Delta^{-4}$ for some constant $C'>0$.
\end{remark}

\section{Further remarks and conclusions }\label{sec:conclusion}
The main conceptual ingredient that allowed us to break the $q=2\Delta$ barrier is to carefully use the local structure of the neighborhood of a vertex to bound the marginal probability of the root vertex in combination with information about the log-ratios of vertices at distance at most $2$ of the root vertex.
It is tempting to do a more systematic analysis of the behavior of the log-ratios at the vertex $v$ in terms of the log-ratios at distance $2$ of $v$, but it is not clear to us how to make use of this extra information.

Clearly, our bound on $\eta$ can be improved once one has better bounds in Corollaries~\ref{cor: few blocked} and~\ref{cor:sparse neighborhood}. It is not unlikely that these corollaries can be somewhat improved, but we leave this for possible future work.
We suspect that a more substantial improvement on our bound on $\eta$ could be obtained if instead of controlling the $\ell_\infty$ norm of the vector $\left(R_{G,v;j,q}(\tilde w)-R_{G,v;j,q}(w)\right)_{j\in [q-1]}$ one can perhaps simultaneously control its $\ell_1$- and $\ell_\infty$ norm.

As remarked in the introduction for reasons of clarity we focused on the Potts model on bounded degree graphs. We next collect some remarks about extending our approach to different settings, that we didn't pursue here, because it would distract the focus from the main ideas presented in the current paper. 

\paragraph{Multivariate Potts model.}
One possible extension is to allow for a graph $G=(V,E)$ of maximum degree $\Delta$ a vector of edge weights $(w_e)_{e\in E}$ and thereby transform the associated partition function to the multiaffine polynomial
\begin{align*}
    Z_G(q,(w_e)_{e\in E})=\sum_{\phi:V\to [q]}\prod_{\substack{e=uv\in E\\\phi(u)=\phi(v)}}w_e.
\end{align*}
Following the approach from~\cite{BDPR21} for the multivariate setting, it should not be difficult to extend our main result to this setting.

\paragraph{List coloring.}
Another natural extension to consider is list colorings, i.e. equipping for a graph $G=(V,E)$ each vertex $v\in V$ with a list of colors $L(v)\subset \mathbb{N}$ and defining 
\begin{align*}
    Z_G((L_v)_{v\in V},w)=\sum_{\phi\in \prod_{v\in V}L(v)}\prod_{\substack{e=uv\in E \\ \phi_u=\phi_v}}w.
\end{align*}
Since all of our bounds only really depend on the number of available colors, it should again not be difficult to extend our main result to list coloring setting, provided each list $L(v)$ satisfies $|L(v)|\geq (2-\eta)\Delta(G)$. 

\paragraph{Triangle free graphs.}
As remarked in Remark~\ref{rem:marginal}, our proof for the $\eta=0$ case can recover the result of Liu, Sinclair, and Srivastava for triangle free graphs~\cite{LSS2Delta}. It would be interesting to see if the ideas that we employed for the $\eta>0$ case can somehow be used to also improve their bounds for triangle free graphs. 
It is not immediately clear how to do that, since the neighborhood of a vertex in a triangle free graphs is always an independent set, which has already been taken into account in bounds on the marginal probabilities. Possibly one has to take into account vertices at larger distance from the root vertex.
Another interesting question is to see if better bounds can be obtained if one assumes stronger bounds on the girth of the graph.

\paragraph{Small $\Delta$.}
For small values of $\Delta$, we can obtain better bound for the marginal probabilities of the root vertex by examining all possible neighborhood structure. In combination with our inductive proof this may lead to improved zero-free regions.
For example, when $\Delta=3$ the free-neighbors of a vertex $v$ of free degree at most $2$ could form an independent set of size at most $2$ or induces an edge. 
Examining the exact marginal bounds one can easily obtain a zero-free region containing the interval $[0,1]$ for the anti-ferromagnetic Potts model when $q\ge 5$. We suspect that with sufficient additional effort it should be possible to extend this to $q=4$, but it would involve investigation of local neighborhoods of size bigger than $1$. 
We note that~\cite{Luetalfptasforcubic} has bounds on these marginal probabilities in the $q=4$ and $\Delta=3$ case. 
However, in their setting they don't have an additional pinned neighbor of the root, and as such their bounds cannot be used directly in our setting. 
Additional effort is needed to see if their bounds could be helpful in our setting.

\section*{Acknowledgments}
We thank the anonymous referees for their constructive feedback.

\printbibliography
\clearpage

\appendix

\section{A direct proof of Proposition~\ref{prop:gradient and w}}\label{appendix:additional computations}

We restate an updated version of the proposition here for convenience.
\begin{proposition}\label{prop:gradient updated} 
Let $\alpha>0$ and let $q,\Delta$ be positive integers such that $q\geq (1+\alpha) \Delta+1$.
Let $C_1(\alpha)$ be as in Lemma~\ref{lem:lowerbound_wtilde_sum}.
There is a constant $C_2(\alpha)>0$, such that for any $\varepsilon\in (0,1)$ there exists a $\delta=\min\left\{ \frac{\pi}{8},\frac{C_1(\alpha)}{\Delta}, C_2(\alpha)\varepsilon, \frac{\varepsilon}{8C_2(\alpha)}  \right\} $  
such that the following holds.
Let $(G,v)\in\mathcal{G}_{\Delta,q}^\bullet$, and let $w\in[0,1]$.
Let ${\bf R}\in \mathbb{R}^{q-1}$ be the vector defined by
\[
R_{j}=R_{\overline{G},v;j,q}(w).
\]  
Then
\begin{itemize}
    \item[(i)]
    if ${\bf x} \in \mathbb{C}^{q-1}$ and $\|{\bf R}-{\bf x}\|_\infty\le \delta$
\[
\|{\bf P}_{G,v}(w)-\nabla F_{w,{\bf c}}({\bf x})\|_1\le \varepsilon;
\]
\item[(ii)] if ${\bf x}\in\mathbb{C}^{q-1}$ and $\|{\bf R}-{\bf x}\|_\infty\le \delta$ and $|\tilde w-w|\leq \delta$, then 
\[
    |F_{w,{\bf c}_{G,v}}({\bf x})-F_{\tilde w,{\bf c}_{G,v}}({\bf x})|\le \varepsilon.
\]
\end{itemize}    
\end{proposition}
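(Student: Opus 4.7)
The plan is to prove the proposition by direct computation, exploiting the lower bounds on $|P_{\bf c}(\tilde w, {\bf x})|$ and $|Q_{\bf c}(\tilde w, {\bf x})|$ coming from Lemma~\ref{lem:lowerbound_wtilde_sum} together with uniform upper bounds on $|e^{x_j}|$. Under the assumption that $\|{\bf R}-{\bf x}\|_\infty \leq \delta \leq \pi/8$ and $|\tilde w - w|\leq \delta \leq C_1(\alpha)/\Delta$, Lemma~\ref{lem:lowerbound_wtilde_sum} gives
\[
|P_{\bf c}(\tilde w,{\bf x})|,\ |Q_{\bf c}(\tilde w,{\bf x})|\ \geq\ C(\alpha)\Delta.
\]
Combining Lemma~\ref{lem: ratio bounds} with $|e^{x_j}|\leq e^{R_j+\delta}$, each individual term $|w^{c_j}e^{x_j}|$ is bounded above by a constant $M(\alpha)$ depending only on $\alpha$. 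This is the key quantitative ingredient: although we will sum $q-1$ terms, each such sum is bounded by $q\cdot M(\alpha)=O(\Delta)$, exactly canceling the factor of $\Delta$ in the denominator.

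For part (i), I would use the explicit formula
\[
\frac{\partial F_{w,{\bf c}}}{\partial x_j}({\bf x}) = \frac{(w-1)w^{c_1}e^{x_1}}{P(w,{\bf x})}\delta_{1,j}+\frac{w^{c_j}e^{x_j}}{P(w,{\bf x})}-\frac{w^{c_j}e^{x_j}}{Q(w,{\bf x})}
\]
derived in the proof of Lemma~\ref{lem:gradient for real w}, and compare it with the same expression at ${\bf x}={\bf R}$, which by that lemma equals $P_{G,v;j}(w)$. Applying the identity $a/b - a'/b'=((a-a')b+a'(b'-b))/(bb')$, and using $|e^{x_j}-e^{R_j}|\leq M(\alpha)\delta$ for $\delta\leq 1$ together with the crude bound $|P(w,{\bf x})-P(w,{\bf R})|\leq \sum_j w^{c_j}|e^{x_j}-e^{R_j}|\leq M(\alpha)\Delta\delta$ (and analogously for $Q$), each summand of the resulting expression is at most $C'(\alpha)\cdot w^{c_j}\delta/\Delta$. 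Summing over $j\in[q-1]$, the $\sum_j w^{c_j}\leq q = O(\Delta)$ cancels the $1/\Delta$, yielding $\|\nabla F_{w,{\bf c}}({\bf x})-{\bf P}_{G,v}(w)\|_1 \leq C_2(\alpha)\delta$, which is at most $\varepsilon$ provided $\delta\leq C_2(\alpha)\varepsilon$ (up to reciprocating the constant).

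For part (ii), I would rewrite
\[
F_{w,{\bf c}}({\bf x})-F_{\tilde w,{\bf c}}({\bf x})=\log\frac{P(w,{\bf x})\,Q(\tilde w,{\bf x})}{P(\tilde w,{\bf x})\,Q(w,{\bf x})},
\]
and show the argument of the logarithm is within $O(\delta)$ of $1$. The key estimate is $|w^n-\tilde w^n|\leq n|w-\tilde w|(\max(|w|,|\tilde w|))^{n-1}\leq n\delta$, valid since $|w|\leq 1$ and $\delta\leq 1/\Delta$ ensures $|\tilde w|\leq 2$, so that
\[
|P(w,{\bf x})-P(\tilde w,{\bf x})|\leq \sum_{j}(c_j+1)\,\delta\,|e^{x_j}|\leq M(\alpha)\Delta\delta.
\]
Dividing by $|P|\geq C(\alpha)\Delta$ gives $|P(w,{\bf x})/P(\tilde w,{\bf x})-1|\leq C_3(\alpha)\delta$, and analogously for $Q$. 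Since $\log$ is Lipschitz near $1$, this yields $|F_{w,{\bf c}}({\bf x})-F_{\tilde w,{\bf c}}({\bf x})|\leq 8C_2(\alpha)\delta$ after absorbing constants, which is at most $\varepsilon$ provided $\delta\leq \varepsilon/(8C_2(\alpha))$.

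The main obstacle is careful bookkeeping: a naive $\ell_\infty$-to-$\ell_1$ estimate loses a factor of $\Delta$, so one must exploit throughout that individual numerators $w^{c_j}|e^{x_j}|$ are uniformly bounded in $\alpha$ while the denominators $|P|,|Q|$ are of order $\Delta$, making the summation over the $q-1$ coordinates $\Delta$-neutral. The four-term expression in the statement for $\delta$ then reflects the four separate constraints: $\pi/8$ and $C_1(\alpha)/\Delta$ to apply Lemma~\ref{lem:lowerbound_wtilde_sum}, and two constants of the form $C_2(\alpha)\varepsilon$ and $\varepsilon/(8C_2(\alpha))$ coming from parts (i) and (ii), respectively.
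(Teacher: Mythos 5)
Your proposal follows essentially the same route as the paper's own (appendix) proof: term-by-term estimates on the explicit gradient formula and on $P,Q$, using that each numerator $w^{c_j}|e^{x_j}|$ is $O_\alpha(1)$ while $|P|,|Q|=\Omega_\alpha(\Delta)$, so that summing over the $q-1$ coordinates is $\Delta$-neutral, with part (ii) handled via $\log\bigl(P(w)Q(\tilde w)/(P(\tilde w)Q(w))\bigr)$ exactly as in the paper. Two harmless slips: the quotient identity should read $a/b-a'/b'=\bigl((a-a')b'+a'(b'-b)\bigr)/(bb')$, and the bound $|w^{c_j}-\tilde w^{c_j}|\le c_j\delta$ needs the factor $(1+\delta)^{c_j}\le e^{C_1(\alpha)}$ (from $\delta\le C_1(\alpha)/\Delta$) rather than the stated $|\tilde w|\le 2$, a constant the paper likewise absorbs.
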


\begin{proof}

Let $M=\frac{(1+ \alpha)e^{1/\alpha}}{\alpha}$.
Recall that by Lemma~\ref{lem:prob basic} and Lemma~\ref{lem:prob basic lower}, and by the fact that $q/(q-\Delta)$ and $(q-\Delta)/q$ are respectively decreasing  and increasing for $q>\Delta$, we have that 
 \begin{equation*}
 \frac{\alpha \Delta +1}{\left((1+\alpha)\Delta +1\right) e^{1/\alpha}} \leq   \frac{q-\Delta}{qe^{1/\alpha}} \leq \exp(R_j)\le  \frac{qe^{1/\alpha}}{q-\Delta} \leq \frac{ \left((1+\alpha)\Delta +1\right) e^{1/\alpha} }{\alpha \Delta +1} ,
 \end{equation*}
and therefore in particular 
\begin{equation*}
     M^{-1} \le \exp(R_j)\le  M.
\end{equation*}

Next, let $C_2(\alpha) =\frac{\alpha^3 }{16 (1+\alpha)^3 e^{2/\alpha}}$ and note that
\begin{align*}
   \delta \leq C_2(\alpha)\varepsilon   \leq   \varepsilon\frac{\alpha }{16M^2 (1+\alpha)}  \leq\varepsilon\frac{\alpha \Delta }{16M^2 (1+\alpha)\Delta}  \leq   \frac{(q-\Delta-1)\varepsilon}{16M^2(q-1)} .
\end{align*}
We will now show that the conclusion of the proposition holds with our choice of $\delta$.

 Let $\bf{R}$ be as defined in the proposition, and let $\bm{x} \in \C^{q-1}$ such that $\|\bm{R}- \bm{x}\|_\infty < \delta.$
 We use shorthand notation $P$ and $Q$ for $P_{\bm{c}}(w, \bm{R})$ and $Q_{\bm{c}}(w, \bm{R})$, which are defined in Section~\ref{subsec:outline}.
Since $e^{x_j}$ is contained in a cone of angle at most $2\delta$ centered at the real axis for all $j \in [q-1]$, and since $\delta$ is small enough, it follows by Lemma~\ref{Lem:Barvinok} that
\begin{align*}
    |P(\bm{x})| &\geq \cos(\delta) \left(w^{c_1+1}|e^{x_1 }|+ \sum_{j=2}^{q-1}w^{c_j}|e^{x_j}|\right).\\
\end{align*}
Here $P(\bm{x})$ is shorthand for $P_{\bm{c}}(w,\bm{x})$.
Note that 
\[
|e^{x_j}| = |e^{R_j}e^{x_j-R_j}| = |e^{R_j}| |e^{\Re(x_j)-\Re(R_j)}| \geq e^{ R_j}e^{-\delta}, 
\] 
and thus 
\[
 |P(\bm{x})|  \geq \cos(\delta)e^{-\delta}\left(w^{c_1+1}e^{R_1} +\sum_{j=2}^{q-1}w^{c_j}e^{R_j}\right)= \cos(\delta)e^{-\delta}P.
\]

Simply by applying the triangle inequality and observing that $|e^{x_j}| \leq e^{ R_j}e^{\delta}$, we can also conclude that
\[
|P(\bm{x})| \leq e^\delta P.
\]
There exists a complex number $\xi_j$ of absolute value at most $\delta$ such that $e^{R_j(t)}=e^{\xi_j}e^{R_j}$ for all $j=1, \ldots q-1$. 
Using this and the assumption that $e^{\delta}\leq 1+2\delta$, which is true since $\delta< 0.5 <\log(2)$, we see that 
\begin{align*}
   & \left|\frac{e^{x_j}}{P(\bm{x})}-\frac{e^{R_j}}{P}\right|=\frac{|Pe^{x_j}-e^{R_j}P(\bm{x})|}{|P(\bm{x})|P}
   \leq  \frac{e^{R_j}|Pe^{\xi_j}-P(\bm{x})|}{P|P(\bm{x})|}
   \\
   &\leq \frac{e^{R_j} P\max_{k}|e^{\xi_j}-e^{\xi_k}|}{P|P(\bm{x})|}
\leq\frac{e^{R_j}e^{\delta} 2\max_k|e^{\xi_k}-1|}{\cos(\delta)P} 
\\
&  \leq    \frac{e^{R_j}e^\delta 2(e^\delta -1)}{P}  \leq \frac{4\delta e^{\delta}}{\cos(\delta)}\frac{e^{R_j}}{P} \leq \frac{8 \delta M^2}{(q-\Delta-1)} < \frac{\varepsilon}{2(q-1)}.
\end{align*}
In the penultimate inequality, we use $e^\delta/\cos(\delta) \leq 2$ for $\delta$ smaller than $0.5$.
Furthermore, the $q-\Delta-1$ comes from the fact that we have at least $q-\Delta$ zero entries for the $\bm{c}$ vector, but the first term of $P$ has a factor $w^{c_1 + 1}$.

In an analogous way, we obtain for the same choice of $\delta$ that
\[
\left|\frac{e^{x_j}}{Q(\bm{x})}- \frac{e^{R_j}}{Q}  \right| < \frac{\varepsilon}{2(q-1)}.
\]

Combining everything gives us the following:
\begin{align*}
    \|\bm{P}_{G,v} - \nabla F_{w,c}(\bm{x}) \|_1 & \leq   \|\bm{P}_{G,v} -  \nabla F_{w,c}(\bm{R})\|_1  + \| \nabla F_{w,c}(\bm{R}) -\nabla F_{w,c}(\bm{x}) \|_1\\
    &=\| \nabla F(\bm{R}) -\nabla F(\bm{x}) \|_1\\
    & \leq  \left| w^{c_1+1} \frac{e^{R_1}}{P} - w^{c_1+1} \frac{e^{x_1}}{P(\bm{x})} \right| +  \left| w^{c_1} \frac{e^{R_1}}{Q} - w^{c_1} \frac{e^{x_1}}{Q(\bm{x})} \right|\\
    &+ \sum_{j=2}^{q-1}\left(   \left| w^{c_j} \frac{e^{R_j}}{P} - w^{c_j} \frac{e^{x_j}}{P(\bm{x})} \right| +  \left| w^{c_j} \frac{e^{R_j}}{Q} - w^{c_j} \frac{e^{x_j}}{Q(\bm{x})} \right| \right) \\
    &<\frac{ w^{c_1+1} \varepsilon}{2(q-1)} + \frac{ w^{c_1}\varepsilon}{2(q-1)}  + \sum_{j=2}^{q-1} w^{c_j} \left( \frac{\varepsilon}{2(q-1)}  +  \frac{\varepsilon}{2(q-1)} \right)\\
    &\leq \varepsilon,
\end{align*}
proving (i).

For (ii),  we consider the following
\begin{align*}
    |F_{w,{\bf c}_{G,v}}({\bf x})-F_{\tilde w,{\bf c}_{G,v}}({\bf x})|&= \left|\log\left( \frac{P(w, \bm{x})}{Q(w, \bm{x})}\right)  - \log\left(\frac{P(\tilde{w}, \bm{x})}{Q(\tilde{w}, \bm{x})}\right)  \right| \\
    &= \left|\log\left( \frac{P(\tilde{w}, \bm{x})}{P(w, \bm{x})}  \cdot \frac{Q(w, \bm{x})}{Q(\tilde{w}, \bm{x})} \right)  \right|\\
    & \leq \left|\log\left( \frac{P(\tilde{w}, \bm{x})}{P(w, \bm{x})} \right) \right| + \left| \log\left( \frac{Q(w, \bm{x})}{Q(\tilde{w}, \bm{x})} \right)  \right|.
\end{align*}
Since $\log(z) = \log|z| + i \arg(z)$, our goal is to bound the absolute value and arguments of $\frac{P(\tilde{w}, \bm{x})}{P(w, \bm{x})},\frac{Q(w, \bm{x})}{Q(\tilde{w}, \bm{x})}$.

By remarking that
\[
|\tilde{w}^{c_j} -w^{c_j} | \leq |\tilde{w} - w| |c_j| \max(|w|,|\tilde{w}|^{c_j}) < \delta( 1+ \delta)^{c_j}c_j,
\]
we obtain
 \begin{align*}
     |Q(w, \bm{x}) - Q(\tilde{w}, \bm{x})|& = \left|  \sum_{j=1}^{q-1} \tilde{w}^{c_j} e^{x_j} + \tilde{w}^{c_q+1} - \sum_{j=1}^{q-1} w^{c_j} e^{x_j} - w^{c_q+1}  \right|\\
   &= \left|   \sum_{j=1}^{q-1} (\tilde{w}^{c_j} - w^{c_j} ) e^{x_j} + (\tilde{w}^{c_q+1} - w^{c_q+1})  \right|\\
   & < \sum_{j=1}^{q-1} c_j \delta(1 + \delta)^{c_j}  e^\delta M + (c_{q}+1) \delta  (1+ \delta )^{c_q+1}   e^\delta M\\
   &\leq  \Delta \delta(1+ \delta)^{\Delta}M e^\delta\leq 2\Delta\delta M e^{C_1(\alpha)},
\end{align*}
since $\delta \leq \frac{C_1(\alpha)}{\Delta}$.
Since by Lemma~\ref{lem:lowerbound_wtilde_sum} it follows that $|Q(\tilde{w}, \bm{x})| \ge C(\alpha)\Delta$, we obtain
\begin{align*}
    \left| \frac{Q(w, \bm{x})-Q(\tilde{w}, \bm{x})}{Q(\tilde{w}, \bm{x})}\right| & \leq  \frac{2 \delta M  e^{ C_1(\alpha) }}{C(\alpha)} \leq   \frac{2 \delta  e^{C_1(\alpha) } e^{1/\alpha} }{C(\alpha)} \frac{1+ \alpha}{\alpha} =  C_2(\alpha)\delta.
\end{align*}

Let us write $z=\frac{Q(w, \bm{x})-Q(\tilde{w}, \bm{x})}{Q(\tilde{w}, \bm{x})}$. 
Then, since
\begin{align*}
     |\arg(1+z)| = \left| \arctan\left( \frac{\Im(z)}{1+\Re(z)} \right)\right| \leq \left| \frac{\Im(z)}{1+\Re(z)} \right|  \leq \frac{|z|}{1-|z|},
\end{align*}
it follows that
\[
 \left|\log\left( \frac{Q({w}, \bm{x})}{Q(\tilde w, \bm{x})} \right) \right|  \leq \left|\log|1+z| \right| + \frac{|z|}{1-|z|}\leq \log( 1+ C_2(\alpha)\delta) + \frac{C_2(\alpha)\delta}{1- C_2(\alpha)\delta}.
\]
It follows in a similar fashion that, 
\[
\left| \log\left( \frac{P(\tilde w, \bm{x})}{P({w}, \bm{x})} \right)  \right| \leq \log( 1+ C_2(\alpha)\delta) + \frac{C_2(\alpha)\delta}{1- C_2(\alpha)\delta}.
\]
Since $C_2(\alpha)\delta \leq \tfrac{\varepsilon}{8}$,
we have $C_2(\alpha)\delta/\left(1-C_2(\alpha)\delta\right) \leq 2C_2(\alpha)\delta$.
Thus, we may conclude
\begin{align*}
    |F_{w,{\bf c}_{G,v}}({\bf{x} })-F_{\tilde w,{\bf c}_{G,v}}({\bf{x}})| &  \leq \left|\log\left( \frac{P(\tilde{w}, \bm{x})}{P(w, \bm{x})} \right) \right| + \left| \log\left( \frac{Q(w, \bm{x})}{Q(\tilde{w}, \bm{x})} \right)  \right|\\
    &\leq 2\log( 1+ C_2(\alpha)\delta)  + 4C_2(\alpha)\delta\\
    & \leq   2\log(\exp(\varepsilon/8))  + \frac{\varepsilon}{4}\\
    & < \varepsilon,
\end{align*}
as desired.
 \end{proof}

\end{document}